\theoremstyle{plain}
\newtheorem{thm}{Theorem}[section]
\newtheorem{lem}[thm]{Lemma}
\newtheorem{prop}[thm]{Proposition}
\newtheorem{cor}[thm]{Corollary}
\theoremstyle{definition}
\newtheorem{dfn}[thm]{Definition}
\newtheorem{rem}[thm]{Remark}
\theoremstyle{remark}
\newtheorem{ex}[thm]{Example}
\numberwithin{equation}{section}
\newcommand{\liep}{\mathfrak{p}}
\newcommand{\liek}{\mathfrak{k}}
\newcommand{\lieq}{\mathfrak{q}}
\newcommand{\lieh}{\mathfrak{h}}
\newcommand{\lieg}{\mathfrak{g}}
\newcommand{\liea}{\mathfrak{a}}
\newcommand{\g}{\mathsf{G}}
\newcommand{\h}{\mathsf{H}}
\newcommand{\ko}{\mathsf{K}}
\newcommand{\w}{\mathsf{W}}
\newcommand{\p}{\mathsf{P}}
\newcommand{\n}{\mathsf{N}}
\newcommand{\m}{\mathsf{M}}
\newcommand{\z}{\mathsf{Z}}
\newcommand{\bor}{\mathsf{B}}
\DeclareMathOperator{\SL}{\mathsf{SL}}
\DeclareMathOperator{\GL}{\mathsf{GL}}
\DeclareMathOperator{\PSL}{\mathsf{PSL}}
\newcommand{\rr}{\mathbb{R}}
\newcommand{\zz}{\mathbb{Z}}
\newcommand{\cc}{\mathbb{C}}
\newcommand{\bc}{\begin{center}}
\newcommand{\ec}{\end{center}}
\newcommand{\x}{\mathscr{X}}
\newcommand{\f}{\mathscr{F}}
\newcommand{\grass}{\mathscr{G}}
\newcommand{\cone}{\mathcal{L}_\rho}
\newcommand{\conexi}{\mathcal{L}_\Xi}
\newcommand{\calc}{\mathcal{C}}
\newcommand{\bg}{\partial\Gamma}
\newcommand{\bgc}{\partial^{2}\Gamma}
\newcommand{\gh}{\Gamma_{\tn{H}}}
\newcommand{\pos}{\mathbf{pos}}
\newcommand{\bfi}{\mathbf{I}}
\newcommand{\bfp}{\mathbf{p}}
\newcommand{\bfo}{\mathbf{\Omega}}
\newcommand{\Imin}{\mathbf{I}_{\mathrm{min}}}
\newcommand{\Inonmax}{\mathbf{I}_{\mathrm{nonmax}}}
\newcommand{\tn}{\textnormal}
\begin{document}

\title[Domains of discontinuity]{Anosov representations acting on homogeneous spaces: domains of discontinuity}
\author{León Carvajales and Florian Stecker}
\address{\newline
  León Carvajales \newline
  Universidad de la República \newline
  Facultad de Ciencias Económicas y de Administración \newline
  Instituto de Estadística \newline
  CNRS IRL IFUMI \newline
  e-mail: leon.carvajales@fcea.edu.uy \newline
  \newline
  Florian Stecker \newline
  Florida State University \newline
  Department of Mathematics \newline
  e-mail: math@florianstecker.net}

\thanks{L.C. acknowledges funding by the Agencia Nacional de Investigación e Innovación FCE\_3\_2020\_1 \_162840 and by the German Research Foundation (DFG) through the project 338644254 (SPP2026 Geometry at Infinity).
  This material is based upon work supported by the National Science Foundation under Grant No. DMS-1928930 while L.C. participated in a program hosted by the Mathematical Sciences Research Institute in Berkeley, California, during the Fall 2020 semester.
  F.S. received support from the Klaus Tschira Foundation, the RTG 2229 grant of the DFG and the European Research Council under ERC consolidator grant 614733.
  This work was supported by the DFG under Germany's Excellence Strategy EXC-2181/1 - 390900948 (the Heidelberg STRUCTURES Cluster of Excellence).
}

\begin{abstract}
  We construct open domains of discontinuity for Anosov representations acting on some homogeneous spaces, including (pseudo-Riemannian) symmetric spaces.
  This generalizes work of Kapovich-Leeb-Porti on flag spaces.
  Our results complement those of Gu\'eritaud-Guichard-Kassel-Wienhard, who constructed proper actions of Anosov representations.
  For Zariski dense Ano\-sov representations with respect to a minimal parabolic subgroup acting on some symmetric spaces, we show that our construction describes the largest possible open domains of discontinuity.
\end{abstract}

\maketitle
\setcounter{tocdepth}{1}
\tableofcontents

\section{Introduction}

Let $\g$ be a Lie group and $\x$ be a $\g$-homogeneous space.
A $(\g,\x)$-\textit{structure} on a manifold $M$ is an atlas of charts from $M$ to $\x$, whose transition maps extend to elements of $\g$.
The study of geometric structures on manifolds is an active field, bringing together researchers in topology, geometry, dynamics and other areas.
See e.g. Kassel \cite{KasICM} or Wienhard \cite{WieICM} and references therein for a picture of the state of the art of the field.

A way of constructing a geometric manifold (or orbifold) is to consider a discrete subgroup $\Xi<\g$ and a domain of discontinuity $\bfo_\Xi\subset\x$ for $\Xi$ (an open set on which $\Xi$ acts properly).
Then the quotient space $M:=\Xi\backslash \bfo_\Xi$ is an orbifold and naturally carries a $(\g,\x)$-structure.
The main goal of this paper is to construct open domains of discontinuity in a large class of homogeneous spaces, including pseudo-Riemannian symmetric spaces, for an important class of discrete subgroups of $\g$ called \textit{Anosov subgroups}.

This problem has a long history. 
For instance, a particularly interesting case is the study of convex co-compact subgroups $\Xi<\PSL_2(\cc)$: these act properly on the real hyperbolic $3$-space $\mathbb{H}^3$ and on an open domain of discontinuity $\bfo_\Xi$ of its visual boundary $\partial\mathbb{H}^3$.
This domain of discontinuity is just the complement of the limit set $\Lambda_\Xi\subset \partial\mathbb{H}^3$ of $\Xi$.
The interplay between the geometry of the hyperbolic $3$-manifold $\Xi\backslash \mathbb{H}^3$ and its conformal boundary $\Xi\backslash \bfo_\Xi$ proves to be very fruitful.
Notably, it plays a central role in the proof of Thurston's Hyperbolization Theorem.

More recently, the deformation theory of convex co-compact subgroups of rank one Lie groups has been generalized to higher rank Lie groups by Labourie \cite{Lab} and Guichard-Wienhard \cite{GW}, through the notion of \textit{Anosov representations}.
This is a stable class of discrete and faithful representations $\rho:\Gamma\to\g$, from a word hyperbolic group $\Gamma$ into a semi-simple Lie group $\g$ of non-compact type.
They come in different flavors, according to the choice of a non-empty subset $\theta\subset\Delta$ of simple roots of $\g$.
More concretely, a $\theta$-Anosov representation has a compact invariant \textit{limit set} $\Lambda_\rho^\theta$ in the flag manifold $\f_\theta$ associated to $\theta$, which continuously and equivariantly identifies with the Gromov boundary $\bg$ of $\Gamma$.
Guichard-Wienhard \cite{GW} and Kapovich-Leeb-Porti \cite{KLPdomains} used the hyperbolic nature of the $\Gamma$-action on $\Lambda_\rho^\theta$ to construct co-compact open domains of discontinuity $\bfo_\rho\subset \f_{\theta'}=\x$ for such representations, where $\theta'\subset\Delta$ is possibly different from $\theta$.
The topology of the quotient space $\Gamma\backslash \bfo_\rho$ has been studied by many authors \cite{AMTW,ALSHitOrb,CTTMax,DScompact,ADLquasiHit,DavNearly}, see Alessandrini-Maloni-Tholozan-Wienhard \cite{AMTW} for a detailed account.

In this paper, rather than focusing on a flag manifold $\f_{\theta'}$ we consider a general $\g$-homogeneous space $\x$, with only the assumption that the diagonal action $\g\curvearrowright (\f_\theta\times\x)$ has finitely many orbits.
This holds when $\x=\f_{\theta'}$, when $\x$ is a (not necessarily Riemannian) \textit{symmetric space} of $\g$ (see Section \ref{sec: cartan for symmetric} for definitions) and when $\x$ is \textit{spherical} (see Luna \cite{Luna}).

A simple consequence of our main result is the following corollary, which states that the set of points in $\x$ which are in ``generic position'' with respect to $\Lambda_\rho^\theta$ forms a domain of discontinuity for $\rho$.
More precisely, for $x\in\x$ we let $\h^x$ be the stabilizer in $\g$ of $x$ and $\mathscr{M}_\theta^x\subset\f_\theta$ be the union of open orbits of the action $\h^x\curvearrowright\f_\theta$.
That is, $\mathscr{M}_\theta^x$ is the set of flags which are in ``general position" with respect to $x$.
See Figure \ref{fig: intro} for a picture of how this set may look like in concrete examples.

\begin{cor}[See Corollary \ref{cor: if limit set contained in open orbits, then dod}]\label{cor: dod for open positions in intro}
Let $\theta$ be a non-empty subset of simple roots of $\g$ and $\x$ be a $\g$-homogeneous space so that the action $\g\curvearrowright (\f_\theta\times\x)$ has more than one but finitely many orbits.
Then for every $\theta$-Anosov representation $\rho:\Gamma\to\g$, the open set
\begin{equation}\label{eq: dod for open position in intro}
\{x\in\x: \Lambda_\rho^\theta\subset \mathscr{M}_\theta^x\}
\end{equation}
\noindent is a domain of discontinuity for $\rho$.
\end{cor}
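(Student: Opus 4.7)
The set in question --- call it $\bfo_\rho$ --- needs to be shown open and then shown to be a domain of discontinuity. Write $\mathscr{O}\subset\f_\theta\times\x$ for the union of the open $\g$-orbits; by the finite-orbit hypothesis, $\mathscr{O}$ is itself open. Since an $\h^x$-orbit in $\f_\theta$ is open if and only if the corresponding $\g$-orbit through $(\cdot,x)$ in $\f_\theta\times\x$ is open, we have $\mathscr{M}_\theta^x=\{F\in\f_\theta:(F,x)\in\mathscr{O}\}$, so $\bfo_\rho=\{x\in\x:\Lambda_\rho^\theta\times\{x\}\subset\mathscr{O}\}$. Openness follows from the tube lemma applied to the compact set $\Lambda_\rho^\theta$.

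For proper discontinuity I would argue by contradiction. Suppose there are $x_n\to x_\infty$ in a compact $K\subset\bfo_\rho$, $\gamma_n\to\infty$ in $\Gamma$, and $y_n:=\rho(\gamma_n)x_n\to y_\infty\in K$. Using the convergence action of $\Gamma$ on $\bg$, pass to subsequences with $\gamma_n\to z$ and $\gamma_n^{-1}\to w$ in $\bg$, and set $F_+:=\xi(z)$ and $F_-:=\xi(w)$ in $\Lambda_\rho^\theta$, where $\xi:\bg\to\Lambda_\rho^\theta$ is the equivariant limit map. The $\theta$-Anosov condition provides the expansion input on $\Lambda_\rho^\theta$: for every $t\in\bg\setminus\{w\}$, $\rho(\gamma_n)\xi(t)=\xi(\gamma_n t)\to F_+$. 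For each such $t$, the pair $(\xi(t),x_\infty)$ lies in an open orbit $O_t$ (since $x_\infty\in\bfo_\rho$ and $\xi(t)\in\Lambda_\rho^\theta$), and by $\g$-invariance $\rho(\gamma_n)(\xi(t),x_n)\in O_t$ converges to $(F_+,y_\infty)$. Since $y_\infty\in\bfo_\rho$, this limit also lies in an open orbit $O^*$, and the general fact that an open orbit contained in the closure of another open orbit must coincide with it forces $O_t=O^*$ independently of $t$.

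The main obstacle is then converting this orbit-coincidence into an actual contradiction. I expect this to be handled by invoking the paper's main theorem (of which the corollary is explicitly flagged as a consequence), which should provide a combinatorial criterion --- in the spirit of Kapovich-Leeb-Porti's balanced or fat ideal condition --- that a $\rho$-invariant compact subset of $\f_\theta$ must satisfy in order to produce a proper action on the associated generic locus in $\x$. The concrete verification should then reduce to checking that the set of non-open $\g$-orbits in $\f_\theta\times\x$ assembles into an ideal satisfying this criterion when paired with $\Lambda_\rho^\theta$; the key inputs are the finite orbit structure, the expansion dynamics above, and the pairwise transversality of distinct limit points coming from the Anosov condition.
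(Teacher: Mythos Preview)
Your third paragraph lands on exactly the paper's route: take $\bfi=\Inonmax$, the set of non-maximal relative positions, observe that $\bfo_\rho=\bfo_\rho^{\Inonmax}$, and apply the main theorem (Theorem~\ref{thm: dod for anosov}). What you leave as ``the concrete verification'' is precisely the content of the paper's Proposition~\ref{prop: non maximal is fat}, and this is the one substantive step you have not supplied. The argument is short but not automatic: suppose some maximal position $\bfp$ were transversely related to no element of $\Inonmax$. Fix any flag $\xi$ with $\pos(\xi,o)$ non-maximal (such $\xi$ exists since there is more than one orbit). Then for every $\xi'$ transverse to $\xi$ one has $\pos(\xi',o)\neq\bfp$. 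But the set of flags transverse to $\xi$ is dense in $\f_\theta$, while $\{\xi':\pos(\xi',o)=\bfp\}$ is open and non-empty, a contradiction. Once this is done, Theorem~\ref{thm: dod for anosov}(1) finishes the proof.

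Your second paragraph is correct as far as it goes, but the orbit-coincidence $O_t=O^*$ is not by itself a contradiction: nothing prevents all limit flags from sitting in the same open $\h^{x_\infty}$-orbit while properness still fails. The paper's mechanism (Lemma~\ref{lem: for domains of discontinuity}) sharpens this kind of observation: a dynamical relation forces $\pos(\xi_+,x')\leq\bfp$ for \emph{every} $\bfp$ transversely related to $\pos(\xi_-,x)$, and it is the fatness of $\Inonmax$ that produces a non-maximal such $\bfp$, yielding $\pos(\xi_+,x')\in\Inonmax$ in contradiction with $x'\in\bfo_\rho$. So your paragraph~2 is a partial rediscovery of Lemma~\ref{lem: for domains of discontinuity} that stops one step short; the missing ingredient is again the fatness of $\Inonmax$.
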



When $\g=\PSL_2(\cc)$ and $\x=\partial\mathbb{H}^3$, the set (\ref{eq: dod for open position in intro}) precisely coincides with the domain of discontinuity for convex co-compact subgroups alluded above. 
More generally, if $\x=\f_{\theta'} $ then Corollary \ref{cor: dod for open positions in intro} was already known by \cite{GW,KLPdomains}.
Further, for $\g=\mathsf{PSO}(p,q)$, $\x=\mathbb{H}^{p,q-1}$ (the \textit{pseudo-Riemannian real hyperbolic space of signature $(p,q-1)$}), and $\theta$ corresponding to the stabilizer of an isotropic line, Corollary \ref{cor: dod for open positions in intro} was proved by Danciger-Gu\'eritaud-Kassel \cite{DGK1}.
In fact, in that setting the set (\ref{eq: dod for open position in intro}) coincides with the domain of discontinuity $\Omega^{p,q-1}$ associated to $\mathbb{H}^{p,q-1}$-\textit{convex co-compact} subgroups (see \cite{DGK1} and Example \ref{ex: dod Hpq} for details).
The observation that $\Omega^{p,q-1}$ can be described using a generalization of Kapovich-Leeb-Porti's formalism \cite{KLPdomains} initiated this project.

Another simple consequence of our main result is the following finer statement in the case that there exists a Cartan involution $\tau$ of $\g$ leaving $\h^x$ invariant (this is notably the case if $\x$ is a symmetric space).
For $x\in\x$ we let $\mathscr{N}_\theta^x $ be the union of all non-closed orbits of the action $\h^x\curvearrowright\f_\theta$.
Note that $\mathscr{M}_\theta^x\subset\mathscr{N}_\theta^x$.

\begin{cor}[See Corollary \ref{cor: dod if unique minimal position}]\label{cor: dod for non closed positions in intro}
Let $\x$ and $\theta$ be as in Corollary \ref{cor: dod for open positions in intro}, and assume furthermore that for some $x\in\x$ there exists a Cartan involution $\tau$ of $\g$ so that $\tau(\h^x)=\h^x$.
Then for every $\theta$-Anosov representation $\rho:\Gamma\to\g$, the open set
\begin{equation}\label{eq: dod for non closed position in intro}
\{x\in\x: \Lambda_\rho^\theta\subset \mathscr{N}_\theta^x\}
\end{equation}
\noindent is a domain of discontinuity for $\rho$.
\end{cor}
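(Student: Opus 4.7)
The plan is to deduce this corollary from the paper's main theorem on domains of discontinuity, which I expect provides the following shape of combinatorial criterion: a $\g$-invariant subset $\mathcal{Z}\subset\f_\theta\times\x$ — viewed as a collection of forbidden \emph{positions} — defines a domain of discontinuity $\{x:(\Lambda_\rho^\theta\times\{x\})\cap\mathcal{Z}=\emptyset\}$ as soon as $\mathcal{Z}$ satisfies an ideal/fatness condition for the natural closure partial order on $\g$-orbits. Under the finiteness of $\g$-orbits on $\f_\theta\times\x$, fixing $x$ identifies these orbits with the $\h^x$-orbits on $\f_\theta$, so the plan is to take $\mathcal{Z}$ to correspond to the closed $\h^x$-orbits. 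The set \eqref{eq: dod for non closed position in intro} is then precisely the locus cut out by this choice, and the proof reduces to verifying the ideal/fatness hypothesis for the collection of closed positions.

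The Cartan involution hypothesis is exactly what is needed at this step. When $\tau(\h^x)=\h^x$, the subgroup $\h^x$ is a real reductive subgroup of $\g$, so by Matsuki-type theory its orbits on $\f_\theta$ are finitely many and the closed orbits coincide with the minimal elements of the closure poset. This realises the forbidden collection as a downward-closed ideal in the $\h^x$-orbit poset, whose complement is exactly $\mathscr{N}_\theta^x$. To pass from this algebraic statement to the dynamical condition required by the main theorem, I would combine the $\tau$-adapted Cartan decomposition $\h^x=\mathsf{K}^x\exp(\liep\cap\lieh^x)$ with a one-parameter $\exp$-degeneration to produce an $\h^x$-equivariant semicontinuous retraction of $\f_\theta$ onto its union of closed orbits; this retraction is what converts Corollary \ref{cor: dod for open positions in intro}'s ``open-position'' domain into the ``non-closed-position'' domain of the present statement.

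The main obstacle I anticipate is aligning the $\h^x$-orbit closure order with the $\g$-orbit closure order on $\f_\theta\times\x$ used by the main theorem: one must show that the Matsuki order refines the $\g$-order, so that minimality of closed $\h^x$-orbits translates into minimality of the corresponding $\g$-positions and the ideal/fatness hypothesis is inherited. Equivalently, one must rule out that a sequence $\rho(\gamma_n)\xi_n$ lying in non-closed $\h^x$-orbits accumulates onto a closed-orbit representative in a way that defeats the Anosov contraction estimates along $\rho$. Once this compatibility of orders is established, the corollary should follow by the same proper-action mechanism as Corollary \ref{cor: dod for open positions in intro}, now with the enlarged forbidden set given by closed (rather than merely non-open) positions.
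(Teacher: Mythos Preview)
Your high-level plan is right: the set \eqref{eq: dod for non closed position in intro} is exactly $\bfo_\rho^{\Imin}$ for the ideal $\Imin$ of minimal (equivalently, closed) relative positions, and one then applies the main theorem. But you have misidentified both what needs to be checked and how the Cartan involution enters.

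First, the ``alignment of orders'' you worry about is a non-issue: the Bruhat order on $\g\backslash(\f_\theta\times\x)$ \emph{is} the closure order on $\h^x$-orbits in $\f_\theta$, via the tautological bijection $\pos(\xi,x)\mapsto \h^x\cdot\xi$; there is nothing to refine or compare. Likewise, ``closed $=$ minimal'' is the elementary Lemma~\ref{lem: maximal positions and openess} and needs no Matsuki theory.

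Second, and more importantly, the role of the hypothesis $\tau(\h^x)=\h^x$ is not to make $\h^x$ reductive or to build a retraction. It is to give an order-preserving involution $w_0$ on the set of relative positions (Proposition~\ref{prop: action of w0}), which in turn underlies the notion of \emph{$w_0$-fat ideal}: an ideal $\bfi$ such that every minimal position not in $\bfi$ has its $w_0$-image in $\bfi$. Since $\Imin$ contains \emph{all} minimal positions, it is $w_0$-fat vacuously, and Theorem~\ref{thm: dod for anosov}(2) applies immediately. That is the entire proof. Your proposed ``semicontinuous retraction onto closed orbits'' is neither needed nor clearly defined, and it is not what bridges Corollary~\ref{cor: dod for open positions in intro} to the present statement; the bridge is the trivial $w_0$-fatness of $\Imin$.
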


As an example, Corollary \ref{cor: dod for non closed positions in intro} applies to the picture on the right in Figure \ref{fig: intro}, but not to the one on the left.

The sets (\ref{eq: dod for open position in intro}) and (\ref{eq: dod for non closed position in intro}) can be empty in some situations and usually won't be maximal domains of discontinuity.
Because of this, we will prove a finer and more systematic result (see Theorems \ref{thm: dod for anosov in intro} and \ref{thm: dod for anosov in intro II} below), whose statement and proof is inspired by \cite{KLPdomains}.
Moreover, we will prove that in some situations this construction describes maximal open domains of discontinuity in $\x$ (Theorem \ref{thm: maximality in intro}), generalizing the flag manifold case \cite{SteIDEALS}.

Before going into the statement of our main results, let us observe that the action of a given infinite discrete subgroup $\Xi<\g$ on $\x$ can be proper itself.
In fact one has the following criterion which intuitively states that this happens if and only if, up to compact sets, the group $\Xi$ ``drifts away" from point stabilizers.

\begin{thm}[Benoist \cite{BenProperness}, Kobayashi \cite{Kob}]\label{thm: benoist kobayashi}
Let $\x$ be any $\g$-homogeneous space and $\Xi<\g$ be an infinite discrete subgroup.
Let $\mu$ be a Cartan projection of $\g$ and $\h=\h^o$ be the stabilizer in $\g$ of a basepoint $o\in\x$.
Then the action $\Xi\curvearrowright\x$ is properly discontinuous if and only if, for every $t\geq 0$, \begin{equation}\label{eq: benoist kobayashi}
\#\{\gamma\in\Xi: d(\mu(\gamma),\mu(\h))\leq t\}<\infty.
\end{equation}
\end{thm}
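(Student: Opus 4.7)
The plan is to use the $KAK$ decomposition to translate the properness criterion into a statement about Cartan projections. The first step is to recognize that $\Xi \curvearrowright \x = \g/\h$ is properly discontinuous if and only if, for every compact $L \subset \g$, the set $\{\gamma \in \Xi : \gamma \in L \h L^{-1}\}$ is finite. Indeed, a compact subset of $\x$ lifts through $\g \to \x$ to a compact $L \subset \g$, and the condition $\gamma L \h \cap L \h \neq \emptyset$ is equivalent to $\gamma \in L \h L^{-1}$.

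The bridge to the Cartan projection is the following standard sub-additivity estimate: for every compact $K \subset \g$ there is a constant $C = C(K)$ such that
\begin{equation*}
\|\mu(k_1 g k_2) - \mu(g)\| \leq C \qquad \text{for all } g \in \g \text{ and } k_1, k_2 \in K.
\end{equation*}
This reflects the fact that $\mu$ is $\ko$-bi-invariant and fails to be a homomorphism only in a controlled way. Granting this, the implication ``(\ref{eq: benoist kobayashi}) implies properness'' is immediate: if $\gamma = l_1 h l_2^{-1}$ with $l_i$ in a compact $L$, then $d(\mu(\gamma), \mu(\h)) \leq \|\mu(\gamma) - \mu(h)\| \leq C(L \cup L^{-1})$, so the hypothesis (\ref{eq: benoist kobayashi}) forces only finitely many such $\gamma$ to lie in $\Xi$.

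For the converse, suppose (\ref{eq: benoist kobayashi}) fails: there exist $t \geq 0$ and infinitely many distinct $\gamma_n \in \Xi$ together with companions $h_n \in \h$ such that $a_n := \mu(h_n) - \mu(\gamma_n) \in \liea$ is uniformly bounded in norm. Write Cartan decompositions
\begin{equation*}
\gamma_n = k_n \exp(\mu(\gamma_n)) k_n', \qquad h_n = l_n \exp(\mu(h_n)) l_n'
\end{equation*}
with $k_n, k_n', l_n, l_n' \in \ko$. Since $\mu(\gamma_n)$ and $a_n$ both lie in the abelian subalgebra $\liea$, a direct manipulation gives
\begin{equation*}
\gamma_n \;=\; (k_n l_n^{-1}) \, h_n \, \bigl((k_n')^{-1} \exp(a_n) l_n'\bigr)^{-1}.
\end{equation*}
Both bracketed factors lie in a fixed compact subset $L \subset \g$ depending only on $\ko$ and the uniform bound on $\|a_n\|$, so $\gamma_n \in L \h L^{-1}$ for every $n$, contradicting the properness criterion of the first paragraph.

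The main obstacle is the sub-additivity estimate used in the second paragraph; the rest is bookkeeping with two Cartan decompositions and the commutativity of $\liea$. In the converse direction, one should also verify that $h_n$ can be chosen measurably (or at least that any two minimizing sequences differ by a bounded amount), but since we only need a uniform bound on $a_n$ this is harmless.
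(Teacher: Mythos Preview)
The paper does not supply its own proof of this theorem; it is quoted as a known result of Benoist and Kobayashi and attributed to the references \cite{BenProperness} and \cite{Kob}. So there is no in-paper argument to compare against.

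Your reconstruction is essentially the standard proof and is correct. The algebraic identity in the converse direction checks out: expanding
\[
(k_n l_n^{-1})\,h_n\,\bigl((k_n')^{-1}\exp(a_n)l_n'\bigr)^{-1}
= k_n \exp(\mu(h_n))\exp(-a_n)k_n'
= k_n \exp(\mu(\gamma_n))k_n'
= \gamma_n,
\]
since $a_n = \mu(h_n)-\mu(\gamma_n)$ and $\liea$ is abelian. Two minor points of phrasing: a compact subset of $\x$ need not \emph{lift} to a compact subset of $\g$, but it is contained in the image $L\cdot o$ of some compact $L\subset\g$ (use local sections of $\g\to\g/\h$), which is all you need; and in the converse you only require \emph{some} $h_n\in\h$ with $\|\mu(h_n)-\mu(\gamma_n)\|\leq t+1$, so no measurable selection or closedness of $\mu(\h)$ is needed. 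With those clarifications the argument is complete.
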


By applying Theorem \ref{thm: benoist kobayashi}, Gu\'eritaud-Guichard-Kassel-Wienhard \cite[Corollary 1.9]{GGKW} constructed examples of Anosov representations acting properly on some homogeneous spaces $\x$.
Roughly speaking, this works as follows: $\rho$ being $\theta$-Anosov means that $\alpha(\mu(\rho(\gamma)))$ grows coarsely linearly in the word length of $\gamma$ for every $\alpha\in\theta$ \cite{BPS,GGKW,KLPDynGeomCharacterizations}.
In particular, if $\mu(\h)$ is a subset of $\cup_{\alpha\in\theta}\ker(\alpha)$, Theorem \ref{thm: benoist kobayashi} implies that the action of $\Gamma$ on $\x$ through $\rho$ is proper.
The quotient $(\g,\x)$-orbifold $\Gamma\backslash \x$ is then called a \textit{Clifford-Klein form} of $\x$.
Our main result, the construction of domains of discontinuity, is of course only interesting if the $\Gamma$-action on $\x$ is not already proper.
Moreover, in our maximality theorem we assume that the action on $\x$ is not proper through a refinement of condition (\ref{eq: benoist kobayashi}).

\subsection{Main results}


Let $\h=\h^o$ be the stabilizer in $\g$ of a given basepoint $o\in\x$.
Fix also a non-empty subset $\theta\subset\Delta$ of simple roots of $\g$, and let $\p_\theta$ be the corresponding parabolic subgroup of $\g$ (which without loss of generality we assume to be self-opposite, see Subsection \ref{subsec: flags}).
We then have identifications $\f_\theta\cong \g/\p_\theta \tn{ and } \x\cong\g/\h.$

We will always assume that the diagonal action $\g\curvearrowright(\f_\theta\times \x)$ has finitely many orbits.
The $\g$-orbit of $(\xi,x)\in\f_\theta\times\x$ is called the \textit{relative position} between the flag $\xi$ and the point $x$, and is denoted by $\pos(\xi,x)$.
The set of relative positions $\g \backslash (\f_\theta\times \x)$ carries a natural partial order, called the \textit{Bruhat order}, which is given by inclusions of orbit closures.
In other words, $\bfp' \leq \bfp$ if there exist sequences $\xi_n \to \xi$ and $x_n \to x$ with $\pos(\xi_n, x_n) = \bfp$ for all $n$ and $\pos(\xi, x) = \bfp'$.
Informally speaking, ``moving down" in this partial order means to decrease the ``degree of genericity" of the relative position.
See Figure \ref{fig: intro} for pictures in some concrete examples (more examples will be discussed in Sections \ref{sec: relative positions} and \ref{sec: fat ideals}).
Our finiteness assumption on the set of relative positions implies that this indeed defines a partial order (see Lemma \ref{lem: partial order}).
When $\x=\f_{\theta'}$ is a flag manifold the Bruhat order has been well studied, see e.g. \cite[Proposition 2.2.13]{SteThesis} for a combinatorial description.
In our general setting, it seems no such general description is available, see Richardson-Springer \cite{RichSpri} for a particular case.

\begin{figure}

  \begin{center}
    
    \begin{tikzpicture}[scale=0.5]
      \begin{scope}[scale=0.8]
        \fill[blue!20,rounded corners] (2.5,-1.8) rectangle (7.5,1.8);
        \fill[black!20,rounded corners] (-0.0,-3.2) rectangle (10.0,-17);
        \fill[black!35,rounded corners] (0.3,-8.3) rectangle (9.7,-16.7);
        
        \draw[thick,blue] (3.5,-1.5) -- (6.5,1.5);
        \draw[thick,red] (3.5,1.5) -- (6.5,-1.5);
        \draw[thick,blue] (1.0,-6.5) -- (4.0,-3.5);
        \draw[thick,red] (1.0,-3.5) -- (4.0,-6.5);
        \draw[thick,blue] (6.0,-6.5) -- (9.0,-3.5);
        \draw[thick,red] (6.0,-3.5) -- (9.0,-6.5);
        \draw[thick,blue] (1.0,-11.5) -- (4.0,-8.5);
        \draw[thick,red] (1.0,-8.5) -- (4.0,-11.5);
        \draw[thick,blue] (5.95,-11.5) -- (8.95,-8.5);
        \draw[thick,red] (6.0,-11.55) -- (9.0,-8.55);
        \draw[thick,blue] (3.45,-16.5) -- (6.45,-13.5);
        \draw[thick,red] (3.5,-16.55) -- (6.5,-13.55);

        \draw[thick,->] (4.0,-2) -- (3.5,-3);
        \draw[thick,->] (6.0,-2) -- (6.5,-3);
        \draw[thick,->] (2.5,-7) -- (2.5,-8);
        \draw[thick,->] (7.5,-7) -- (7.5,-8);
        \draw[thick,->] (4.25,-7) -- (5.75,-8);
        \draw[thick,->] (5.75,-7) -- (4.25,-8);
        \draw[thick,->] (3.5,-12) -- (4.0,-13);
        \draw[thick,->] (6.5,-12) -- (6.0,-13);

        \fill[red] (4,1) circle (0.2);
        \fill[red] (2.5,-5) circle (0.2);
        \fill[red] (6.5,-4) circle (0.2);
        \fill[red] (2.5,-10) circle (0.2);
        \fill[red] (8.5,-9) circle (0.2);
        \fill[red] (5,-15) circle (0.2);

        \fill[blue] (4,-1) circle (0.2);
        \fill[blue] (1.5,-6) circle (0.2);
        \fill[blue] (7.5,-5) circle (0.2);
        \fill[blue] (7.5,-10) circle (0.2);
        \fill[blue] (2.5,-9.8) arc (90:270:0.2) -- cycle;
        \fill[red] (2.5,-10.2) arc (-90:90:0.2) -- cycle;
        \fill[blue] (5,-14.8) arc (90:270:0.2) -- cycle;
        \fill[red] (5,-15.2) arc (-90:90:0.2) -- cycle;
      \end{scope}
      
      \begin{scope}[shift={(12,-1)}]
        \fill[blue!20,rounded corners] (-2.5,-1.8) rectangle (12.5,2.2);
        \fill[black!20,rounded corners] (-0.5,-3.1) rectangle (10.5,-11.7);
        \fill[black!35,rounded corners] (2.2,-8.1) rectangle (7.8,-11.5);
        
        \draw[thick,blue] (0,0) ellipse (2 and 1.2);
        \draw[thick,blue] (5,0) ellipse (2 and 1.2);
        \draw[thick,blue] (10,0) ellipse (2 and 1.2);
        \draw[thick,blue] (2.5,-5) ellipse (2 and 1.2);
        \draw[thick,blue] (7.5,-5) ellipse (2 and 1.2);
        \draw[thick,blue] (5,-10) ellipse (2 and 1.2);
        
        \draw[thick,red] (-2.2,1.4) -- (2,1.7);
        \draw[thick,red] (3,-1.5) -- (7,1.5);
        \draw[thick,red] (8,-1.5) -- (12,1.5);
        \draw[thick,red] (0,-4.2) -- (4.5,-3.3);
        \draw[thick,red] (5.5,-6.5) -- (9.5,-3.5);
        \draw[thick,red] (2.5,-9.2) -- (7,-8.3);
        
        \fill[red] (1,1.6) circle (0.2);
        \fill[red] (6.6,1.23) circle (0.2);
        \fill[red] (10,0) circle (0.2);
        \fill[red] (3.4,-3.5) circle (0.2);
        \fill[red] (8.75,-4.08) circle (0.2);
        \fill[red] (4.35,-8.85) circle (0.2);
        
        \draw[thick,->] (1.0,-2) -- (1.5,-3);
        \draw[thick,->] (4.0,-2) -- (3.5,-3);
        \draw[thick,->] (6.0,-2) -- (6.5,-3);
        \draw[thick,->] (9.0,-2) -- (8.5,-3);
        \draw[thick,->] (3.5,-7) -- (4.0,-8);
        \draw[thick,->] (6.5,-7) -- (6.0,-8);        
      \end{scope}
    \end{tikzpicture}
  \end{center}

  \begin{center}

\caption{A picture of the Bruhat order in two examples for $\g=\PSL_3(\rr)$.
    In both cases, $\f_\theta=\f_\Delta$ is the space of full flags in $\rr^3$, whose elements are represented in red as a point inside a line in the projective plane.
    Elements of $\x$ are drawn in blue.
    On the left $\x=\f_\Delta$ and on the right $\x = \mathsf{PSL}_3(\rr)/\mathsf{PSO}(2,1)$, with its elements represented by the corresponding isotropic cone.
    Black arrows generate the Bruhat order in each case.
    The purple box represents $\mathscr{M}_\theta^x$, which consists of a single relative position for the picture on the left, and of three relative positions for the picture on the right.
    The light grey box is the ideal $\Inonmax$ in each case, and the dark grey box is the minimal fat ideal.}
    \label{fig: intro}
\end{center}
\end{figure}

A subset $\bfi$ of relative positions is called an \textit{ideal} if for every $\bfp\in\bfi$ and every $\bfp'\leq\bfp$ one has $\bfp'\in\bfi$.
Given such a set and a $\theta$-Anosov representation $\rho:\Gamma\to\g$ with limit set $\Lambda_\rho^\theta\subset\f_\theta$ we may define $$\bfo_\rho^\bfi:=\{x\in\x: \pos(\xi,x)\notin\bfi \tn{ for every } \xi\in\Lambda_\rho^\theta\}.$$
\noindent As an example, the set (\ref{eq: dod for open position in intro}) in Corollary \ref{cor: dod for open positions in intro} corresponds to the ideal $\Inonmax$ consisting of non-maximal relative positions, and the set (\ref{eq: dod for non closed position in intro}) in Corollary \ref{cor: dod for non closed positions in intro} corresponds to the ideal $\Imin$ of minimal positions.

The set $\bfo_\rho^\bfi$ is $\Gamma$-invariant and the fact that $\bfi$ is an ideal implies that it is open, but to guarantee properness of the action $\Gamma\curvearrowright\bfo_\rho^\bfi$ an extra condition on $\bfi$ is needed.
In the case $\x=\f_{\theta'}$, Kapovich-Leeb-Porti \cite{KLPdomains} introduced the notion of \textit{fat ideal}, and showed that for such ideals the $\Gamma$-action on $\bfo_\rho^\bfi$ is indeed proper.
In Section \ref{sec: fat ideals} we generalize this notion to our setting and then prove the following in Section \ref{sec: dod}.

\begin{thm}[See Theorem \ref{thm: dod for anosov}(1)]\label{thm: dod for anosov in intro}
Let $\theta\subset\Delta$ be a non-empty subset of simple roots of $\g$ and $\x$ be a $\g$-homogeneous space so that the action $\g\curvearrowright (\f_\theta\times\x)$ has finitely many orbits.
Let $\bfi\subset \g\backslash(\f_\theta\times\x)$ be a fat ideal.
Then for every $\theta$-Anosov representation $\rho:\Gamma\to\g$, the set $\bfo_\rho^\bfi$ is a domain of discontinuity for $\rho$.
\end{thm}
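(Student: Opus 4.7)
The plan is to separate the two conditions for $\bfo_\rho^\bfi$ to be a domain of discontinuity: openness and properness of the $\Gamma$-action. Openness follows from the ideal property of $\bfi$ together with compactness of $\Lambda_\rho^\theta$. I argue the contrapositive: if $x_n\to x$ with $x_n\notin\bfo_\rho^\bfi$, pick $\xi_n\in\Lambda_\rho^\theta$ with $\pos(\xi_n,x_n)\in\bfi$. Compactness of $\Lambda_\rho^\theta$ and finiteness of $\g\backslash(\f_\theta\times\x)$ let me pass to a subsequence with $\xi_n\to\xi\in\Lambda_\rho^\theta$ and $\pos(\xi_n,x_n)=\bfp\in\bfi$ constant; the very definition of the Bruhat order then yields $\pos(\xi,x)\leq\bfp$, whence $\pos(\xi,x)\in\bfi$ and so $x\notin\bfo_\rho^\bfi$.

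For properness I argue by contradiction, adapting the dynamical strategy of Kapovich-Leeb-Porti \cite{KLPdomains} to general homogeneous spaces. Suppose there exist a compact $K\subset\bfo_\rho^\bfi$, pairwise distinct $\gamma_n\in\Gamma$, and points $x_n,y_n\in K$ with $y_n=\rho(\gamma_n)x_n$. After extraction, $x_n\to x\in K$ and $y_n\to y\in K$. Write the Cartan decomposition $\rho(\gamma_n)=k_n\exp(\mu_n)\ell_n$ with $k_n\to k$ and $\ell_n\to\ell$ in a maximal compact $\ko$; the $\theta$-Anosov property forces $\alpha(\mu_n)\to\infty$ for every $\alpha\in\theta$. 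The flags $\xi_+:=k\cdot[\p_\theta]$ and $\xi_-:=\ell^{-1}\cdot[\p_\theta^{\mathrm{opp}}]$ (the latter viewed in $\f_\theta$ by self-oppositeness of $\p_\theta$) are then transverse points of $\Lambda_\rho^\theta$, arising as the limits of the attracting flags of $\rho(\gamma_n)$ and $\rho(\gamma_n^{-1})$ respectively. Since $x,y\in\bfo_\rho^\bfi$, I read off $\pos(\xi_-,x)\notin\bfi$ and $\pos(\xi_+,y)\notin\bfi$.

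The crux of the argument is a contraction statement on $\x$: under the above hypotheses, every accumulation point $y$ of $\rho(\gamma_n)x_n$ with $\pos(\xi_-,x)\notin\bfi$ must satisfy $\pos(\xi_+,y)\in\bfi$. Granting this, the two conclusions about $\pos(\xi_+,y)$ conflict and properness follows. The main obstacle is proving the contraction lemma in our generality: after translating by $\ell$ and $k$ to reduce to the asymptotics of $\exp(\mu_n)z_n$ with $z_n\to\ell x$, one stratifies $\x$ according to positions relative to the standard repelling flag and tracks how the Cartan flow transports these strata as the $\theta$-roots diverge. In the flag manifold case $\x=\f_{\theta'}$, Bruhat cells provide a clean combinatorial setting for this analysis \cite{KLPdomains}; in the general homogeneous setting one must instead work with the orbit stratification of $\x$ under a minimal parabolic, and the definition of fat ideal developed in Section~\ref{sec: fat ideals} is designed precisely so that the combinatorial implication ``$\pos(\xi_-,x)\notin\bfi\Rightarrow\pos(\xi_+,y)\in\bfi$'' holds along any such contracting sequence.
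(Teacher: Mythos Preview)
Your setup is fine: openness is handled correctly, and the contradiction framework for properness (extract $\xi_\pm\in\Lambda_\rho^\theta$ from Cartan decompositions, note $\pos(\xi_-,x)\notin\bfi$ and $\pos(\xi_+,y)\notin\bfi$) matches the paper. The gap is in what you call ``the crux'': you assert the contraction statement but do not prove it, and the route you sketch---stratify $\x$ by $\bor$-orbits and track how $\exp(\mu_n)$ moves strata in $\x$---is neither carried out nor obviously tractable for a general homogeneous space. (Also, your claim that $\xi_+$ and $\xi_-$ are transverse is unjustified and unnecessary; they could coincide.)

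The paper's key idea avoids analyzing dynamics on $\x$ altogether. Since $\bfi$ is fat and $\bfp_-:=\pos(\xi_-,x)\notin\bfi$, by definition there exists $\bfp\in\bfi$ with $\bfp\leftrightarrow\bfp_-$. Unwinding the definition of $\leftrightarrow$, this produces a flag $\xi'\in\f_\theta$ \emph{transverse to $\xi_-$} with $\pos(\xi',x)=\bfp$. Now write $x_n=g_n\cdot x$ with $g_n\to 1$; then $g_n\cdot\xi'$ is eventually transverse to $\xi_-$, and the well-understood contraction dynamics \emph{on the flag manifold} (not on $\x$) give $\rho(\gamma_n)g_n\cdot\xi'\to\xi_+$. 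Since relative position is $\g$-invariant,
\[\pos(\rho(\gamma_n)g_n\cdot\xi',\rho(\gamma_n)\cdot x_n)=\pos(\xi',x)=\bfp,\]
and passing to the limit yields $\pos(\xi_+,y)\leq\bfp\in\bfi$, hence $\pos(\xi_+,y)\in\bfi$, the desired contradiction. This is the content of Lemma~\ref{lem: for domains of discontinuity}. The point you are missing is that the transversely-related relation $\leftrightarrow$ is not just a combinatorial bookkeeping device: it hands you an auxiliary flag on which you can run the flag-manifold contraction, sidestepping any direct analysis of $\exp(\mu_n)$ acting on $\x$.
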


Theorem \ref{thm: dod for anosov in intro} applies to a very general class of spaces $\x$.
This makes the notion of fat ideal hard to track.
We therefore refine the above result for homogeneous spaces $\x$ for which more structure is available.

Part of this structure is the existence of a Cartan involution $\tau$ of $\g$ so that $\tau(\h)=\h$.
As already mentioned, such an involution always exists when $\x$ is a \textit{symmetric space}, i.e., when $\h$ coincides with the fixed point set of an involutive automorphism $\sigma:\g\to\g$ (see Subsection \ref{subsec: cartan for symmetric}).
In Proposition \ref{prop: action of w0} we show that the assumption $\tau(\h)=\h$ implies the existence of an order preserving involution $w_0$ of the set of relative positions with interesting properties (see Section \ref{sec: relative positions}).
With this involution at hand, we introduce in Section \ref{sec: fat ideals} the notion of $w_0$-fat ideal: an ideal $\bfi\subset\g\backslash(\f_\theta\times\x)$ is $w_0$-\textit{fat} if for every minimal relative position $\bfp\notin\bfi$ one has $w_0\cdot\bfp\in\bfi$.
This is similar to the notion of fat ideal introduced by Kapovich-Leeb-Porti \cite{KLPdomains} when $\x=\f_{\theta'}$.
In their setting, there is also an involution $w_0$ of the set of relative positions, but it is order reversing instead (note that no Cartan involution fixes $\p_{\theta'}$).
Kapovich-Leeb-Porti defined an ideal $\bfi$ to be fat if for every $\bfp\notin\bfi$ one has $w_0\cdot \bfp\in\bfi$.

Equipped with our notion of $w_0$-fat ideal we prove the following analogue of Theorem \ref{thm: dod for anosov in intro} which notably applies to symmetric spaces.

\begin{thm}[See Theorem \ref{thm: dod for anosov}(2)]\label{thm: dod for anosov in intro II}
Let $\theta\subset\Delta$ be a non-empty subset of simple roots of $\g$ and $\x$ be a $\g$-homogeneous space so that the action $\g\curvearrowright (\f_\theta\times\x)$ has finitely many orbits.
Assume moreover that there is a Cartan involution $\tau$ of $\g$ so that $\tau(\h)=\h$ and let $\bfi\subset \g\backslash(\f_\theta\times\x)$ be a $w_0$-fat ideal.
Then for every $\theta$-Anosov representation $\rho:\Gamma\to\g$, the set $\bfo_\rho^\bfi$ is a domain of discontinuity for $\rho$.
\end{thm}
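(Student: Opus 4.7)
The plan is to establish the two non-obvious conditions for $\bfo_\rho^\bfi$ to be a domain of discontinuity: openness and properness of the $\Gamma$-action ($\Gamma$-invariance is immediate from the $\Gamma$-invariance of $\Lambda_\rho^\theta$). Openness is the easier of the two: because $\bfi$ is downward-closed for the Bruhat order, the set $\{(\xi,x)\in\f_\theta\times\x: \pos(\xi,x)\in\bfi\}$ is a union of $\g$-orbits, each of whose closure lies in the set, so it is closed in $\f_\theta\times\x$. Compactness of $\Lambda_\rho^\theta$ then implies that its projection to $\x$ is closed, and $\bfo_\rho^\bfi$ is its complement.

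For properness I would argue by contradiction. Suppose that there exist $x_n\to x$ and $y_n:=\rho(\gamma_n)x_n\to y$ in $\bfo_\rho^\bfi$ with $\gamma_n\to\infty$ in $\Gamma$. Passing to a subsequence and exploiting Gromov hyperbolicity, assume $\gamma_n\to\eta^+$ and $\gamma_n^{-1}\to\eta^-$ in $\partial\Gamma$. The $\theta$-Anosov hypothesis supplies attracting and repelling flags $\xi^\pm:=\xi(\eta^\pm)\in\Lambda_\rho^\theta$ together with Cartan decompositions $\rho(\gamma_n)=k_n a_n l_n$ such that $k_n\p_\theta\to\xi^+$, $l_n^{-1}\p_\theta^{\mathrm{opp}}\to\xi^-$, and $\alpha(\log a_n)\to\infty$ for every $\alpha\in\theta$. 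The goal is to produce a relative position lying in $\bfi$ at $\xi^+$ or $\xi^-$, contradicting $x,y\in\bfo_\rho^\bfi$.

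The core of the argument is a dynamical lemma relating the pair $(\pos(\xi^-,x),\pos(\xi^+,y))$ via the involution $w_0$. Here is where the assumption $\tau(\h)=\h$ intervenes crucially. It provides a $\tau$-compatible decomposition of $\g$ relative to $\h$ (a polar decomposition for $\x$), which controls the displacement of the $l_n x_n$ under the diverging $a_n$. I expect to show: after passing to a subsequence, there is a \emph{minimal} relative position $\bfp_0$ with
\[
\pos(\xi^-,x)\leq \bfp_0 \quad\text{and}\quad \pos(\xi^+,y)\leq w_0\cdot\bfp_0,
\]
(or the symmetric statement with the roles of $x$ and $y$ reversed, obtained by replacing $\gamma_n$ by $\gamma_n^{-1}$). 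Granting this lemma, the $w_0$-fat condition yields the required contradiction: if $\bfp_0\notin\bfi$, then by $w_0$-fatness $w_0\cdot\bfp_0\in\bfi$, so $\pos(\xi^+,y)\in\bfi$ by the ideal property, contradicting $y\in\bfo_\rho^\bfi$. And $\bfp_0\in\bfi$ contradicts $x\in\bfo_\rho^\bfi$ via $\pos(\xi^-,x)\leq\bfp_0$ and the ideal property.

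The principal obstacle is the dynamical lemma itself: one must determine precisely how the $a_n$-action on $\x$, viewed relative to $\xi^-$, deposits the $x_n$ into a stratum determined by a minimal position, and why the limit $y$ then sits in the $w_0$-conjugate stratum relative to $\xi^+$. This requires a careful analysis of $\h$-orbits on $\f_\theta$ through the lens of the Cartan involution, and relies essentially on the order-\emph{preserving} nature of $w_0$ established in Proposition \ref{prop: action of w0}. This is the conceptual difference with \cite{KLPdomains}: in the flag manifold setting, $w_0$ reverses the Bruhat order and the fatness condition is phrased for all non-ideal positions, whereas here $w_0$ preserves order and swaps minimal positions among themselves, which is exactly why the $w_0$-fat condition need only be imposed on minimal relative positions.
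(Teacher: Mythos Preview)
Your outline is correct in structure---openness via the ideal property, properness by contradiction, and the endgame using $w_0$-fatness on minimal positions---and this matches the paper. However, the ``dynamical lemma'' you leave as a black box is the entire content of the argument, and your proposed attack via a polar decomposition of $\x$ is not what the paper does and would be hard to carry out at this level of generality, since no structure theory of $\h$ beyond $\tau(\h)=\h$ and finiteness of relative positions is assumed. (Note also that as stated your lemma is slightly awkward: $\pos(\xi^-,x)\leq\bfp_0$ with $\bfp_0$ minimal already forces equality, so you are really asserting that $\pos(\xi^-,x)$ is minimal.)

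The paper avoids any direct analysis of orbits on $\x$ by introducing an intermediate combinatorial relation: two positions $\bfp,\bfp'$ are \emph{transversely related}, written $\bfp\leftrightarrow\bfp'$, if they are realized by a single point of $\x$ together with two transverse flags. The dynamical input (Lemma~\ref{lem: for domains of discontinuity}) is then easy to state and prove from the contraction property of Anosov representations: if $x,x'$ are dynamically related as in your setup, then for every $\bfp$ with $\bfp\leftrightarrow\pos(\xi^-,x)$ one has $\pos(\xi^+,x')\leq\bfp$. The hypothesis $\tau(\h)=\h$ enters only through the purely algebraic fact (Corollary~\ref{cor: w0 action and transversely related}) that $\bfp\leftrightarrow w_0\cdot\bfp$ for every $\bfp$. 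Now pick any minimal $\bfp_{\min}\leq\pos(\xi^-,x)$; then $w_0\cdot\bfp_{\min}\leftrightarrow\bfp_{\min}$, and since $\leftrightarrow$ is preserved under moving up in the Bruhat order (Corollary~\ref{cor: increasing position keeps the relation}) also $w_0\cdot\bfp_{\min}\leftrightarrow\pos(\xi^-,x)$, whence $\pos(\xi^+,x')\leq w_0\cdot\bfp_{\min}$. Since $w_0$ is order-preserving, $w_0\cdot\bfp_{\min}$ is minimal, so $\pos(\xi^+,x')$ is minimal. By the symmetric argument with $\gamma_n^{-1}$, $\pos(\xi^-,x)$ is minimal too, and one more pass gives $\pos(\xi^-,x)=w_0\cdot\pos(\xi^+,x')$, contradicting $w_0$-fatness. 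No polar decomposition or structure theory of $\x$ is needed.
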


Both our ``fat ideals'' and ``$w_0$-fat ideals'' are generalizations of the fat ideals from \cite{KLPdomains}, each with different advantages: the definition of ``fat ideals'' requires nothing except the set of relative positions being finite.
On the other hand, the notion ``$w_0$-fat ideals'' only applies in a more special situation, but can give larger domains, sometimes even maximal ones (see below).
Example \ref{ex: fat ideals in flag case}, Lemma \ref{lem: fat is w0-fat}, and Example \ref{ex: fat ideals lines transverse to hyperplanes and full flags} compare these and Kapovich-Leeb-Porti's definition in more detail.

Another advantage of Theorem \ref{thm: dod for anosov in intro II} is that it is easier to apply than Theorem \ref{thm: dod for anosov in intro}.
For instance, it readily implies Corollary \ref{cor: dod for non closed positions in intro}, since the set of minimal positions is a $w_0$--fat ideal.
More generally, to understand $w_0$-fat ideals we only need to consider $w_0$-orbits of minimal positions, regardless of how the whole picture of the Bruhat order looks like (for a taste of how this works concretely, see Examples \ref{ex: fat ideals complementary subspaces and projective} and \ref{ex: fat ideals quadratic forms and projective}).
By following this principle, in Subsection \ref{subsec: exampples of dods} we discuss families of examples where Theorem \ref{thm: dod for anosov in intro II} applies.
For instance, we construct domains of discontinuity in group manifolds (Example \ref{ex: dod group manifolds}) and in spaces of quadratic forms of given signature on a real vector space (Example \ref{ex: dod quadratic forms projetive anosov}). We also construct domains in \begin{equation}\label{eq: complementary subspaces in intro}
\x:=\{(U^+,U^-)\in \mathscr{G}_p(\rr^d)\times\mathscr{G}_q(\rr^d): U^+\oplus U^-=\rr^d \},
\end{equation}
\noindent where $\mathscr{G}_i(\rr^d)$ denotes the Grassmannian of $i$-dimensional subspaces of $\rr^d$, and $p$ and $q$ are positive integers with $d=p+q$. 
See Example \ref{ex: dod complementary subspaces projective} for details.

Finally, we prove a maximality result.
Note that if $\bfi'\subset\bfi$ then $\bfo_\rho^\bfi\subset\bfo_\rho^{\bfi'}$, hence we need to care about minimal $w_0$-fat ideals (c.f. Figure \ref{fig: intro}).
In the case $\x=\f_{\theta'}$ and for $\Delta$-Anosov representations, maximal open domains of discontinuity correspond precisely to minimal fat ideals, as proven in \cite{SteIDEALS}.
Our final result is a statement in the same direction when $\x$ is a symmetric space satisfying an extra hypothesis.

As mentioned before, the action of a discrete subgroup of $\g$ on $\x$ may be proper itself.
Theorems \ref{thm: dod for anosov in intro} and \ref{thm: dod for anosov in intro II} are independent of this, but a maximality result must, somehow, take Benoist-Kobayashi's Theorem \ref{thm: benoist kobayashi} into account.
In our case we require that the interior of the \textit{limit cone} $\cone$ of $\rho$ intersects $\mu(\h)$.
Recall that the limit cone of $\rho$ is a fundamental object in the study of asymptotic properties of $\rho$.
It was introduced by Benoist \cite{BenPAGL} who also showed some remarkable properties, like that it is convex with non-empty interior when $\rho$ is Zariski dense (see Subsection \ref{subsec: limit cone} for details).

In the following theorem we assume that $\h$ is symmetric and $\tau$ is a Cartan involution with $\tau(\h) = \h$.
In this setting we can find a Cartan subspace $\liea $ of $\g$ whose restriction to the Lie algebra $\lieh$ of $\h$ is a Cartan subspace of $\h$.
We let $\ko$ be the maximal compact subgroup associated to $\tau$ and $\m$ the centralizer in $\ko$ of $\liea$.

\begin{thm}[See Theorem \ref{thm: maximality with w0-fat}]\label{thm: maximality in intro}
Suppose that $\h$ is symmetric, $\liea$ is $\sigma$-invariant, and $\liea_\h:=\liea\cap\lieh$ is a Cartan subspace of $\h$.
Assume also $\m\subset\h$.
Let $\rho:\Gamma\to\g$ be a Zariski dense $\Delta$-Anosov representation so that $\mu(\h)\cap\tn{int}(\cone)\neq\emptyset$.
Then if $\bfo\subset \x$ is a maximal open domain of discontinuity for $\rho$, there exists a $w_0$-fat ideal $\bfi\subset\g\backslash(\f_\Delta\times\x)$ so that $\bfo=\bfo_\rho^\bfi$.
\end{thm}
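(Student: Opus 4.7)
The plan is to construct a specific $w_0$-fat ideal $\bfi$ from $\bfo$ and appeal to Theorem \ref{thm: dod for anosov in intro II}. I would set
\[
\bfi := \{\bfp \in \g\backslash(\f_\Delta \times \x) : \pos(\xi,x) \neq \bfp \text{ for every } \xi \in \Lambda_\rho^\Delta \text{ and every } x \in \bfo\},
\]
which is the largest subset of relative positions compatible with the inclusion $\bfo \subset \bfo_\rho^\bfi$. Once $\bfi$ is verified to be a $w_0$-fat ideal, Theorem \ref{thm: dod for anosov in intro II} makes $\bfo_\rho^\bfi$ an open domain of discontinuity containing $\bfo$, so maximality of $\bfo$ forces $\bfo=\bfo_\rho^\bfi$.

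The ideal property is the easier half. Suppose $\bfp' \leq \bfp$ and $\bfp' = \pos(\xi', x')$ with $\xi' \in \Lambda_\rho^\Delta$, $x' \in \bfo$. The stratum $\{\xi \in \f_\Delta : \pos(\xi, x') = \bfp\}$ is locally closed with $\xi'$ in its topological closure. Zariski density of $\rho$ (through Benoist's results) forces $\Lambda_\rho^\Delta$ to be Zariski dense in $\f_\Delta$ with attracting fixed points of proximal elements of $\rho(\Gamma)$ dense inside $\Lambda_\rho^\Delta$; combining this with transversality of the Bruhat strata one obtains $\xi \in \Lambda_\rho^\Delta$ close to $\xi'$ with $\pos(\xi, x') = \bfp$, and hence $\bfp \notin \bfi$.

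The $w_0$-fat property is the main obstacle. Let $\bfp \notin \bfi$ be a minimal position realized by $(\xi, x) \in \Lambda_\rho^\Delta \times \bfo$, and suppose for contradiction that $w_0 \cdot \bfp \notin \bfi$ with witness $(\xi', x') \in \Lambda_\rho^\Delta \times \bfo$. The hypotheses that $\liea$ is $\sigma$-invariant, $\liea_\h$ is a Cartan subspace of $\h$, and $\m \subset \h$ imply that the involution $w_0$ of Proposition \ref{prop: action of w0} is induced by the Cartan involution $\tau$ acting on $\liea$, and that the $W_\h$-orbit of $\liea_\h^+$ is contained in $\mu(\h)$. Pick $v \in \liea_\h^+ \cap \tn{int}(\cone)$, nonempty by hypothesis. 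Benoist's density of Cartan directions in $\cone$, combined with density of attracting-repelling pairs of proximal elements inside $\Lambda_\rho^\Delta \times \Lambda_\rho^\Delta$, then produces $\gamma_n \in \Gamma$ with $\mu(\rho(\gamma_n))/\|\mu(\rho(\gamma_n))\|\to v/\|v\|$, attracting flag converging to $\xi$, and repelling flag converging to the Cartan dual of $\xi'$. Because $v \in \mu(\h)$ and $\bfp$ is paired with $w_0 \cdot \bfp$ under $\tau$, a compact neighborhood of $x'$ inside $\bfo$ is sent by $\rho(\gamma_n)$ into a neighborhood of $x$ inside $\bfo$; escape to infinity of $\gamma_n$ then contradicts the properness of $\rho(\Gamma)$ on $\bfo$. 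The heart of the argument is precisely this translation of the combinatorial involution $w_0$ into the dynamics of Anosov sequences whose Cartan directions track $\liea_\h^+ \cap \cone$, which the algebraic assumptions $\m \subset \h$ and $\liea_\h$ Cartan in $\h$ are designed to enable.
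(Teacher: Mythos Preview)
Your overall strategy---define $\bfi$ as the complement of all realized positions, show it is a $w_0$-fat ideal, and conclude by maximality plus Theorem \ref{thm: dod for anosov in intro II}---matches the paper exactly. The difficulty is in the execution of the two verifications.

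\textbf{The ideal argument has a genuine gap.} You fix $x'$ and try to find $\xi \in \Lambda_\rho^\Delta$ lying in the stratum $\{\xi : \pos(\xi,x') = \bfp\}$, appealing to Zariski density of the limit set and ``transversality of the Bruhat strata.'' But Zariski density only says $\Lambda_\rho^\Delta$ is not contained in any proper closed subvariety; it gives no reason for $\Lambda_\rho^\Delta$ (typically a circle, a Cantor set, or some low-dimensional sphere) to meet a fixed non-open stratum. No transversality result rescues this: for a generic $x'$ the limit curve will simply miss the lower strata. The paper avoids the issue entirely by varying $x$ instead of $\xi$: if $\bfp \in \bfi$, then $\{x \in \x : \pos(\xi_0,x)=\bfp\} \subset \x \setminus \bfo$ for the \emph{fixed} limit flag $\xi_0$ realizing $\bfp'$; taking closures (using only that $\bfo$ is open) gives $x_0 \in \x \setminus \bfo$, a contradiction. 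This needs nothing about the limit set beyond the one point $\xi_0$.

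\textbf{The $w_0$-fat argument is in the right spirit but too vague.} The paper's proof of this step is isolated as Proposition \ref{prop suf condition for dynamical relation symmetric case}, and the details you gloss over are exactly where the hypotheses earn their keep. One uses Matsuki's parametrization of minimal positions by $\w^\sigma/(\w\cap\h)$ together with Corollary \ref{cor: related minimal positions} to write $\pos(\xi_-,x)=\bor w_0 w\h$ and $\pos(\xi_+,x')=\bor w\h$ for a single $w\in\w^\sigma$. The assumption $\m\subset\h$ is used to strip the $\m$-part from Iwasawa-type decompositions so that $x$ and $x'$ are represented as $n^- a w\cdot o$ and $n' a' w\cdot o$. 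Then one invokes the approximation result (Proposition \ref{prop: directions interior to limit cone approached by mu(Gamma)}) to produce $\gamma_p$ with Cartan attractor/repellor near $\xi_\pm$ and $\mu(\rho(\gamma_p))$ approaching a prescribed affine ray in $\mu(\h)$; the crucial identity is $h_p w\cdot o = w\cdot o$ for $h_p\in\exp(\liea_\h)$, coming from $w\in\w^\sigma$. Finally one constructs an explicit sequence $x_p\to x$ with $\rho(\gamma_p)\cdot x_p\to x'$. Your sketch (``a compact neighborhood of $x'$ is sent into a neighborhood of $x$'') is not the correct form of a dynamical relation and omits these mechanisms.
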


A family of examples in which Theorem \ref{thm: maximality in intro} applies is given by the symmetric spaces (\ref{eq: complementary subspaces in intro}) of complementary subspaces in $\rr^d$, see Example \ref{ex: maximal complementary subspaces}. 
See also Remark \ref{rem: MsubsetH and oriented} and Example \ref{ex: MsubsetH is needed for dynamical relation} for comments on the assumption $\m\subset\h$ in Theorem \ref{thm: maximality in intro}.

\subsection{Outline of the proof}\label{subsec: outline of proof intro}

We now discuss informally the main ideas behind the proofs of Theorems \ref{thm: dod for anosov in intro}, \ref{thm: dod for anosov in intro II} and \ref{thm: maximality in intro}.
Along the way, we provide intuitions behind the notions of fat and $w_0$-fat ideals.

A central property of $\theta$-Anosov representations, already used by Guichard-Wien\-hard \cite{GW} and Kapovich-Leeb-Porti \cite{KLPdomains} in their construction of domains of discontinuity, is the fact that the $\Gamma$-action on $\Lambda_\rho^\theta$ is ``uniformly hyperbolic".
This means that for every sequence $\gamma_n\to\infty$ in $\Gamma$ one may find flags $\xi_+$ and $\xi_-$ in $\Lambda_\rho^\theta$ with the property that, as $n \to \infty$,
\begin{equation}\label{eq: uniformly hyperoblic dynamics intro}
  \rho(\gamma_n)|_{C(\xi_-)} \rightarrow \xi_+
\end{equation}
locally uniformly as maps on the flag manifold.
Here $C(\xi_-)$ is the open set of flags transverse to $\xi_-$.

On the other hand, if the $\Gamma$-action on $\bfo_\rho^\bfi$ is not proper, there must be points $x,x'\in\bfo_\rho^\bfi$ which are \textit{dynamically related} under a sequence in $\rho(\Gamma)$, i.e. there exist sequences $x_n\to x$ and $\gamma_n\to\infty$ so that
\begin{equation}\label{eq: dyn rel intro}
\rho(\gamma_n)\cdot x_n\to x'.
\end{equation}
A key step in our argument is to combine the uniformly hyperbolic nature of the dynamical system $\Gamma\curvearrowright\f_\theta$ with this remark.
Indeed, Equations (\ref{eq: uniformly hyperoblic dynamics intro}) and (\ref{eq: dyn rel intro}) express that some relative positions are ``degenerating" to a lower position.
Consequently, for every $\xi\in\f_\theta$ transverse to $\xi_-$ we must have
\[\pos(\xi_+,x')\leq \pos(\xi,x).\]
\noindent This is the content of Lemma \ref{lem: for domains of discontinuity}, which is adapted from the flag case \cite[Proposition 6.2]{KLPdomains}.

The above argument suggests to introduce a symmetric relation $\leftrightarrow$ on the set of relative positions.
If $\bfp,\bfp'\in\g\backslash(\f_\theta\times\x)$, we say that $\bfp$ is \textit{transversely related} to $\bfp'$, and denote $\bfp\leftrightarrow\bfp'$, if there exist two transverse flags $\xi_1,\xi_2\in\f_\theta$ and a point $x\in\x$ so that
\[\bfp=\pos(\xi_1,x) \tn{ and } \bfp'=\pos(\xi_2,x).\]
In Section \ref{sec: fat ideals} we define an ideal to be \textit{fat} if for every $\bfp\notin\bfi$ there exists $\bfp'\in\bfi$ so that $\bfp'\leftrightarrow\bfp$.
When $\x=\f_{\theta'}$, this precisely coincides with the notion of fat ideal introduced by Kapovich-Leeb-Porti \cite{KLPdomains} (see Example \ref{ex: fat ideals in flag case}).
Equipped with this definition, Lemma \ref{lem: for domains of discontinuity} readily implies Theorem \ref{thm: dod for anosov in intro} (see Theorem \ref{thm: dod for anosov}(1)).

The proof of Theorem \ref{thm: dod for anosov in intro II} is also crucially based on Lemma \ref{lem: for domains of discontinuity}.
Indeed, suppose that $\tau(\h)=\h$.
We then have an order preserving involution $w_0$ of the set of relative positions (Proposition \ref{prop: action of w0}).
Further, in Corollary \ref{cor: w0 action and transversely related} we prove that for every relative position $\bfp$ one has $\bfp\leftrightarrow w_0\cdot \bfp$.
Thus, the notion of $w_0$-fat ideal is designed precisely to apply Lemma \ref{lem: for domains of discontinuity} and obtain a contradiction if we suppose that the $\Gamma$-action on $\bfo_\rho^\bfi$ is not proper (see Theorem \ref{thm: dod for anosov}(2) for details).

Finally, we comment on the proof of Theorem \ref{thm: maximality in intro}.
The key step is Proposition \ref{prop suf condition for dynamical relation symmetric case}, which can be thought of as a converse of Lemma \ref{lem: for domains of discontinuity} in the sense that gives a sufficient condition for having a dynamical relation between given points in $\x$, in terms of their relative positions with points in the limit set $\Lambda_\rho^\Delta$.
Once this is proved, the maximality result follows by a general argument.

The proof of Proposition \ref{prop suf condition for dynamical relation symmetric case} is based on two ingredients.
One is a description of the set of minimal relative positions when $\x$ is symmetric and $\theta=\Delta$, due to Matsuki \cite{MatsukiMinimalParabolics} (see Theorem \ref{thm: matsuki minimal}).
This result states that minimal positions are parametrized by the subset $\w^\sigma$ of the Weyl group that preserves $\liea_\h=\liea\cap\lieh$.
In Corollary \ref{cor: related minimal positions} we apply this result to show that the transverse relation $\bfp \leftrightarrow \bfp'$ is equivalent to $\bfp' = w_0 \cdot\bfp$ for minimal positions $\bfp$ and $\bfp'$.
This gives us a nice relation between the relevant relative positions $\pos(\xi_+,x')$ and $\pos(\xi_-,x)$, and a very concrete way of representing them by $w$ and $w_0w$ respectively, for some $w\in \w^\sigma$.

The other ingredient is that, by definition of $\w^\sigma$, for every $H\in\liea\cap\lieh$ one has \begin{equation}\label{eq:  action of H on wo intro}
\exp(H)w\cdot o=w\cdot o.
\end{equation}
\noindent Hence, for sequences $\{\rho(\gamma_n)\}$ for which $\mu(\rho(\gamma_n))$ is close to $\mu(\h)$ we get good control on the displacement of the point $w\cdot o\in\x$.
This is captured by Proposition \ref{prop: directions interior to limit cone approached by mu(Gamma)}, where we apply deep results by Benoist \cite{BenPAGL,BenPAGLII} to find a sequence $\gamma_n\to\infty$ whose attracting and repelling points approach $\xi_+$ and $\xi_-$ respectively, and for which the Cartan projection $\mu(\rho(\gamma_n))$ approaches a given direction in $\mu(\h)\cap\tn{int}(\cone)$.
This allows us to conclude with essentially the same argument as in \cite{SteIDEALS}.

\subsection{Final remarks and future directions}

Recall that Guichard-Wienhard \cite{GW} and Kapovich-Leeb-Porti \cite{KLPdomains} constructed domains of discontinuity in $\x=\f_{\theta'}$ which are moreover co-compact.
It is natural to ask whether we can generalize this result to construct co-compact domains of discontinuity in our setting.
Observe however that in the setting of \cite{GW,KLPdomains}, a crucial ingredient to prove co-compactness is the fact that $\x=\f_{\theta'}$ is compact itself, a feature that no longer holds in our more general setting.
Note that even if our domains do not give compact quotients, it might be possible to compactify them using an approach like Gu\'eritaud-Guichard-Kassel-Wienhard \cite{GGKWtame}.


As another direction, it would be interesting to understand the topology of the quotients $\rho(\Gamma)\backslash\bfo_\rho^\bfi$.
In the case that $\x$ is a flag manifold, Guichard-Wienhard \cite{GW} show that the topological type of $\rho(\Gamma)\backslash\bfo_\rho^\bfi$ stays the same when $\rho$ is deformed continuously.
Due to the lack of compactness, an analogue of this theorem for general $\x$ would require a different argument, and understanding the topology of the quotient might be difficult.
A special case is that of $\mathbb{A}\tn{d}\mathbb{S}$-quasi-Fuchsian $3$-manifolds studied by Mess \cite{Mess}, in which the quotient space is always homeomorphic to the product of a closed surface with the real line.


\subsection{Organization of the paper}

The paper is structured as follows.
In Section \ref{sec: cartan for symmetric} we discuss well known preliminaries and fix some notations about the structure theory of semi-simple Lie groups and their symmetric spaces. 
In Sections \ref{sec: relative positions} and \ref{sec: fat ideals} we develop the formalism of relative positions and fat ideals. 
In Section \ref{sec: anosov} we recall Benoist's results on the limit cone and introduce Anosov representations. 
We prove Theorems \ref{thm: dod for anosov in intro} and \ref{thm: dod for anosov in intro II} in Section \ref{sec: dod}, where we also include a detailed discussion of examples. 
Finally, we prove the maximality Theorem \ref{thm: maximality in intro} in Section \ref{sec: maximality}.

Dependence between sections is as follows (in particular, Sections \ref{subsec: cartan for symmetric}, \ref{subsec: minimal transversely related} and \ref{subsec: limit cone} are only needed for the maximality theorem in Section \ref{sec: maximality}):

\begin{center}
\begin{tikzpicture}
\node[left] (A) at (-5.8,-3) {(\ref{subsec: roots} to 
\ref{subsec: weyl})};

\node[left] (B) at (-5.7,-5.8) { \ref{subsec: cartan for symmetric}};

\node[left] (C) at (-5.2,-4.2) { \ref{subsec: flags}};

\node[left] (D) at (-3.8,-3) {(\ref{subsec: relative positions and order} to \ref{subsec: transversely related})};
\node[left] (E) at (-1.6,-3) {\ref{sec: fat ideals}};

\node[left] (F) at (1,-3) {\ref{sec: dod}};
\node[left] (G) at (-2,-4.2) {(\ref{subsec: tits} to \ref{subsec: anosov reps})};
\node[left] (H) at (2,-5.8) {\ref{sec: maximality}};
\node[left] (I) at (-4,-5) {\ref{subsec: minimal transversely related}};
\node[left] (J) at (0,-4.2) {\ref{subsec: limit cone}};

\draw[->,thick] (A) -- (B);
\draw[->,thick] (A) -- (C);
\draw[->,thick] (B) -- (H);
\draw[->,thick] (C) -- (D);
\draw[->,thick] (C) -- (G);
\draw[->,thick] (D) -- (E);
\draw[->,thick] (E) -- (F);
\draw[->,thick] (F) -- (H);
\draw[->,thick] (G) -- (F);
\draw[->,thick] (I) -- (H);
\draw[->,thick] (D) -- (I);
\draw[->,thick] (G) -- (J);
\draw[->,thick] (J) -- (H);
\end{tikzpicture}
\end{center}

\subsection{Acknowledgements}

We are grateful to Jeffrey Danciger, Rafael Potrie, Sara Maloni, Beatrice Pozzetti, Andr\'es Sambarino and Anna Wienhard for many helpful discussions.
We acknowledge the warm hospitality of Universit\"at Heidelberg where large parts of this work were completed.

\section{Cartan decomposition for symmetric spaces}\label{sec: cartan for symmetric}

We begin by fixing notations and recalling part of the structure theory of semisimple Lie groups and symmetric spaces.
All the material covered in this section is standard (expect possibly for that of Subsection \ref{subsec: cartan for symmetric}), and the reader is referred to Knapp \cite{Kna} and Schlichtkrull \cite[Chapter 7]{Sch} for further details.

Throughout the paper we fix a linear, connected, finite center, semisimple Lie group $\g$ without compact factors.
We also let $\x$ be a $\g$-homogeneous space.
We fix a basepoint $o\in\x$ and let $\h<\g$ be its stabilizer in $\g$.
Hence $\x\cong\g/\h$.

\subsection{Roots, Weyl chambers and Cartan projection}\label{subsec: roots}

Let $\lieg$ be the Lie algebra of $\g$ and $\kappa$ be its Killing form.
A \textit{Cartan involution} of $\lieg$ is an involutive automorphism $\tau:\lieg\to\lieg$ so that the bilinear form $$(X,Y)\mapsto -\kappa(X,\tau (Y))$$
\noindent is positive definite.
We let $\liek$ (resp. $\liep$) be the eigenspace of $\tau$ associated to the eigenvalue $1$ (resp. $-1$).

Fix a \textit{Cartan subspace} $\liea\subset\liep$, i.e. a maximal (abelian) subalgebra contained in $\liep$.
Let $\Sigma\subset\liea^*$ be the set of \textit{restricted roots} of $\liea$ on $\lieg$.
By definition, a non-zero $\alpha\in\liea^*$ belongs to $\Sigma$ if and only if the \textit{root space}$$\lieg_\alpha:=\{X\in\lieg: [A,X]=\alpha(A)X \tn{ for all } A\in\liea\}$$
\noindent is non-zero.
An element of $\liea$ is \textit{regular} if it belongs to $\liea\setminus(\cup_{\alpha\in\Sigma}\ker\alpha)$.

Let $\liea^+\subset\liea$ be a \textit{Weyl chamber}, i.e. the closure of a connected component of $\liea\setminus(\cup_{\alpha\in\Sigma}\ker\alpha)$.
Its interior is denoted by $\tn{int}(\liea^+)$.
Let $\Sigma^+:=\{\alpha\in\Sigma: \alpha\vert_{\liea^+}\geq 0\}$ be the corresponding positive system, and $\Delta\subset\Sigma^+$ be the set of simple roots.
Then $\Delta$ is a basis of $\liea^*$ on which the coefficients of every positive root $\alpha\in\Sigma^+$ are non-negative. See Example \ref{ex: flags sld} for a concrete example.

Let $\ko<\g$ be the connected Lie subgroup associated to the Lie algebra $\liek$.
It is a maximal compact subgroup of $\g$.
We have the \textit{Cartan decomposition} $\g=\ko\exp(\liea^+)\ko$ of $\g$ and a corresponding \textit{Cartan projection} $\mu:\g\to\liea^+$ characterized, for all $g\in\g$, by $$g\in\ko\exp(\mu(g))\ko.$$

\subsection{Symmetric subgroups}\label{subsec: symmetric subgroups}

Let $\lieh\subset\lieg$ be the Lie algebra of $\h$.
A special case of interest is the following: the group $\h$ is \textit{symmetric} if it coincides with the fixed point set of an involutive automorphism $\sigma:\g\to\g$.
In this case we say that $\x$ is a \textit{symmetric space}.
We will also denote by $\sigma$ the induced Lie algebra automorphism.
In particular, $\lieh$ coincides with the fixed point set of $\sigma:\lieg\to\lieg$.

The Cartan involution $\tau$ may be chosen in such a way that $\sigma\tau=\tau\sigma$ (see Schlichtkrull \cite[Proposition 7.1.1]{Sch}), and we will assume this choice whenever working with a symmetric $\h$.
Note that $\tau$ preserves the splitting $\lieg=\lieh\oplus\lieq$, where $$\lieq:=\{X\in\lieg: \sigma(X)=-X\}.$$
\noindent The group $\h$ is reductive, $\ko\cap\h$ is a maximal compact subgroup of $\h$ and $$\lieh=(\lieh\cap\liek)\oplus(\lieh\cap\liep)$$ \noindent is a Cartan decomposition of $\lieh$ (see \cite[p.115]{Sch}).

In addition, the Cartan subspace $\liea$ may be chosen to be $\sigma$-invariant and we always assume this is the case when working in the symmetric setting.
Further, we will always pick our Cartan subspace $\liea$ so that $\liea_\h:=\liea\cap\lieh$ is a Cartan subspace of $\lieh$.
The dual space $(\liea_\h)^*$ is naturally embedded in $\liea^*$ by extending a functional $\liea_\h\to\rr$ to be zero on $\liea\cap\lieq$.
Let $\Sigma_\h$ be the set of restricted roots of $\liea_\h$ in $\lieh$, that is, the set of non-zero functionals in $(\liea_\h)^*$ for which the root space $$\lieh_\alpha:=\{X\in\lieh: [A,X]=\alpha(A)X \tn{ for all } A\in\liea_\h\}$$ \noindent is non-zero.

For $\alpha\in\Sigma$ we denote $\sigma(\alpha):=\alpha\circ\sigma$.
Note that $\lieg_{\sigma(\alpha)}=\sigma(\lieg_\alpha)$ and, as a consequence, $\sigma$ induces an involution of $\Sigma$.
The Weyl chamber $\liea^+$ (or equivalently $\Sigma^+$) is said to be $\sigma$-\textit{compatible} if for every $\alpha\in\Sigma^+$ with $\alpha\vert_{\liea_\h}\neq 0$ one has $\sigma(\alpha)\in\Sigma^+$.
We record the following remark for future use.

\begin{rem}\label{rem: sigma compatible and regular}
Suppose that the Weyl chamber $\liea^+$ satisfies $\liea_\h\cap\tn{int}(\liea^+)\neq\emptyset$.
Then $\liea^+$ is $\sigma$-compatible.
\end{rem}

\subsection{Weyl groups}\label{subsec: weyl}

We let $\n_\ko(\liea)$ (resp. $\m:=\z_\ko(\liea)$) be the normalizer (resp. centralizer) of $\liea$ in $\ko$.
The \textit{Weyl group} of $\Sigma$ is $\w:=\n_\ko(\liea)/\m$.
It acts simply transitively on the set of Weyl chambers of $\Sigma$.

For $\alpha\in\Delta$ we let $s_\alpha\in\w$ be the corresponding \textit{root reflection}, i.e. the element acting on $\liea$ as an orthogonal reflection, with respect to the Killing form, along $\ker\alpha$.
The set $\{s_\alpha\}_{\alpha\in\Delta}$ generates $\w$.
With respect to this generating set, there exists a unique longest element $w_0\in\w$.
We let $\iota:=-w_0:\liea\to\liea$ be the corresponding \textit{opposition involution}.
Note that $\iota$ acts on $\Delta$ and therefore $\iota(\liea^+)=\liea^+$.

In the symmetric setting, two other ``Weyl groups" will be of interest.
Firstly, we may consider the group $$(\n_{\ko}(\liea)\cap\h)/(\m\cap\h).$$ \noindent It naturally embeds as a subgroup of $\w$.
Abusing notations, we will denote this subgroup by $\w\cap\h\subset\w$.
Secondly, note that both $\n_\ko(\liea)$ and $\m$ are $\sigma$-invariant, so $\sigma$ induces an involution of $\w$. We denote its fixed point set by $\w^\sigma$.

In other words, if $\widetilde w \in \n_\ko(\liea)$ is a representative of $w \in \w$, then $w \in \w^\sigma$ means that $\sigma(\widetilde w)$ and $\widetilde w$ represent the same element in $\w$, while $w \in \w \cap \h$ means that $\widetilde w$ can be chosen so that $\sigma(\widetilde w) = \widetilde w$.
Observe that $\w \cap \h$ is a normal subgroup of $\w^\sigma$.
In general, we have $\w\cap\h\neq\w^\sigma$ (see Example \ref{ex: matsuki and w0 action complementary subspaces}).



\subsection{Cartan projection of symmetric subgroups}\label{subsec: cartan for symmetric}

Suppose that $\h$ is symmetric and let $\liea$ be a $\sigma$-invariant Cartan subspace of $\lieg$ so that $\liea_\h:=\liea\cap\lieh$ is a Cartan subspace of $\lieh$.
Assuming that $\liea_\h$ contains a regular element, we will now prove that the Cartan projection $\mu$ can be chosen in such a way that $\mu(\h)\subset\lieh$ (Corollary \ref{cor cartan compatible with H} below).
We include a proof as we were unable to find a reference for it, although this fact is likely well--known.
It will be used for the proof of Proposition \ref{prop suf condition for dynamical relation symmetric case}.





We first need a preparatory lemma, which describes the root system of $\liea_\h$ acting on $\lieh$ in terms of the root system of $\liea$ acting on $\lieg$.

\begin{lem}\label{lema roots of liea cap lieh and liea}
Suppose that $\h$ is symmetric and that $\liea_\h=\liea\cap\lieh$ is a Cartan subspace of $\lieh$.
Then the following equality holds:
\[\Sigma_\h = \lbrace \alpha|_{\liea_\h} \colon \ \alpha\in\Sigma \tn{ and } \lieg_{\alpha}\not\subset\lieq\rbrace\setminus\lbrace 0 \rbrace.\]
\end{lem}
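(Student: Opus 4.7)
The plan is to analyze the weight-space decomposition of $\lieg$ under the action of $\liea_\h \subset \liea$, and compare it with that of $\lieh$. The key observation driving the argument is that since $\sigma$ is trivial on $\liea_\h$, the weight spaces for $\liea_\h$ inside $\lieg$ are $\sigma$-invariant, and therefore split neatly into their $\lieh$ and $\lieq$ parts.

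More concretely, I would first fix a nonzero $\lambda \in \liea_\h^*$ and define
\[
\lieg^{(\lambda)} := \bigoplus_{\alpha \in \Sigma,\ \alpha|_{\liea_\h} = \lambda} \lieg_\alpha,
\]
the $\lambda$-weight space of $\liea_\h$ acting on $\lieg$. Using $[\liea_\h,\liea] = 0$ together with the identity $\sigma([H,X]) = [\sigma H, \sigma X] = [H, \sigma X]$ valid for $H \in \liea_\h$, I would show that $\sigma(\lieg^{(\lambda)}) = \lieg^{(\lambda)}$. This lets me decompose
\[
\lieg^{(\lambda)} = \lieh^{(\lambda)} \oplus \lieq^{(\lambda)},
\qquad
\lieh^{(\lambda)} := \lieg^{(\lambda)} \cap \lieh,\ \lieq^{(\lambda)} := \lieg^{(\lambda)} \cap \lieq.
\]
By definition $\lieh^{(\lambda)}$ is the $\lambda$-weight space of $\liea_\h$ acting on $\lieh$, so $\lambda \in \Sigma_\h$ if and only if $\lieh^{(\lambda)} \neq 0$.

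The remaining step is to translate the condition $\lieh^{(\lambda)} \neq 0$ into the stated condition on root spaces. For the nontrivial direction, if there exists $\alpha \in \Sigma$ with $\alpha|_{\liea_\h} = \lambda$ and $\lieg_\alpha \not\subset \lieq$, I pick $X \in \lieg_\alpha \setminus \lieq$ and project onto $\lieh$ along $\lieq$; the projection $\tfrac{1}{2}(X + \sigma X)$ lies in $\lieh^{(\lambda)}$ by $\sigma$-invariance of $\lieg^{(\lambda)}$, and is nonzero because $X \notin \lieq$. Conversely, if every such $\lieg_\alpha$ is contained in $\lieq$, then $\lieg^{(\lambda)} \subset \lieq$, forcing $\lieh^{(\lambda)} \subset \lieh \cap \lieq = 0$.

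I do not expect any genuine obstacle here — the only thing to watch is the bookkeeping: one must be careful that $\lambda = 0$ is explicitly excluded on both sides (the weight-space decomposition at $\lambda = 0$ also involves the centralizer of $\liea_\h$ in $\lieg$, but the lemma statement removes $0$ from both sides, so this case plays no role). The argument uses nothing beyond the $\sigma$-invariance of $\liea_\h$ and the direct sum decomposition $\lieg = \lieh \oplus \lieq$; it does \emph{not} require $\liea_\h$ to contain a regular element of $\liea$.
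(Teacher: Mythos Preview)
Your proposal is correct and follows essentially the same approach as the paper: both directions hinge on the projection $X \mapsto \tfrac{1}{2}(X+\sigma X)$ onto $\lieh$ and on decomposing elements of $\lieh_\lambda$ into $\liea$-root spaces. Your packaging via the $\sigma$-invariance of the $\liea_\h$-weight spaces $\lieg^{(\lambda)}$ is slightly more structural than the paper's direct computation, but the mathematical content is identical.
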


\begin{proof}

Let $\alpha\in\Sigma$ be such that $\alpha\vert_{\liea_\h}\neq 0$ and $\lieg_{\alpha}\not\subset\lieq$.
There exists then $X_0\in\lieg_{\alpha}$ such that $X:=X_0+\sigma(X_0)\neq 0$.
Then $X \in \lieh$ and $X$ is a root vector for $\alpha|_{\liea_\h}$, as
\[[A,X] = [A,X_0] + [A,\sigma(X_0)] = [A,X_0] + \sigma[A,X_0] = \alpha(A)(X_0 + \sigma(X_0)) = \alpha(A)X\]
for all $A \in \liea_\h \subset \liea$.

Conversely, let $\alpha\in\Sigma_{\h}$ and $X\in\lieh_\alpha$ be a non-zero vector.
We may decompose $X$ as
\[X=X_0+\displaystyle\sum_{\widehat{\alpha}\in\Sigma}X_{\widehat{\alpha}},\]
with $X_0 \in \lieg_0$ and $X_{\widehat\alpha} \in \lieg_{\widehat\alpha}$.
It follows that for every $A\in\liea_\h$,
\[\alpha(A)X=\displaystyle\sum_{\widehat{\alpha}\in\Sigma}\widehat{\alpha}(A)X_{\widehat{\alpha}}.\]
Hence $X_0=0$ and $\alpha(A)X_{\widehat{\alpha}}=\widehat{\alpha}(A)X_{\widehat{\alpha}}$ for every $\widehat{\alpha}\in\Sigma$.
Since $X\neq 0$ there is $\widehat\alpha \in \Sigma$ with $\alpha = \widehat\alpha|_{\liea_\h}$.
Furthermore, $\widehat\alpha$ can be chosen with $\lieg_{\widehat\alpha} \not\subset \lieq$, as otherwise $X \in \lieq$, which would contradict $X \in \lieh$ and $X \neq 0$.
\end{proof}

\begin{cor}\label{cor cartan compatible with H}
Suppose that $\h$ is symmetric, $\liea$ is $\sigma$-invariant, and $\liea_\h:=\liea\cap\lieh$ is a Cartan subspace of $\h$.
Assume that $\liea_\h$ contains a regular element, and pick the Weyl chamber $\liea^+$ in such a way that $\tn{int}(\liea^+)\cap\liea_\h$ is non-empty.
Then the Cartan projection $\mu:\g\to\liea^+$ satisfies $\mu(h)\in\liea_\h$ for all $h\in\h$.
\end{cor}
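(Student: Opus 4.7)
The strategy is to show that, under the stated hypothesis, the Cartan involution $\sigma$ preserves each ingredient of the decomposition $\g=\ko\exp(\liea^+)\ko$, and then to apply $\sigma$ to a Cartan decomposition of $h\in\h$ and invoke uniqueness of the Cartan projection.

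The heart of the argument is to upgrade the $\sigma$-compatibility of $\liea^+$ provided by Remark \ref{rem: sigma compatible and regular} to full $\sigma$-invariance. I would pick $X\in\liea_\h\cap\tn{int}(\liea^+)$, which is non-empty by hypothesis, and note that for every $\alpha\in\Sigma^+$,
\[\sigma(\alpha)(X)=\alpha(\sigma(X))=\alpha(X)>0.\]
Since $X$ lies in the interior of $\liea^+$, the only roots that take strictly positive values at $X$ are those in $\Sigma^+$, so $\sigma(\alpha)\in\Sigma^+$. Hence $\sigma(\Sigma^+)=\Sigma^+$ and consequently $\sigma(\liea^+)=\liea^+$. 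On the group side, $\sigma$ preserves $\liek$ because $\sigma$ and $\tau$ commute, and so the connected subgroup $\ko$ with Lie algebra $\liek$ satisfies $\sigma(\ko)=\ko$.

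With these two invariance properties in hand, the conclusion follows quickly. Given $h\in\h$, write a Cartan decomposition $h=k_1\exp(\mu(h))k_2$ in $\g$, with $k_1,k_2\in\ko$ and $\mu(h)\in\liea^+$. Applying $\sigma$ and using $\sigma(h)=h$ yields a second such decomposition $h=\sigma(k_1)\exp(\sigma(\mu(h)))\sigma(k_2)$, still with $\sigma(k_i)\in\ko$ and $\sigma(\mu(h))\in\liea^+$ by the previous paragraph. Uniqueness of the Cartan projection then forces $\sigma(\mu(h))=\mu(h)$, i.e. $\mu(h)\in\liea\cap\lieh=\liea_\h$.

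The only delicate point is the first step: it is essential that $X$ lie in the \emph{interior} of $\liea^+$ rather than merely in $\liea^+$, since otherwise the inequality $\sigma(\alpha)(X)\geq 0$ would not suffice to place $\sigma(\alpha)$ in $\Sigma^+$. This is exactly what the hypothesis on $\liea^+$ secures, and it is the reason the statement requires $\liea_\h$ to contain a $\lieg$-regular element.
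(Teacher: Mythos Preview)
Your proof is correct and takes a genuinely different, more elementary route than the paper's argument.

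The paper proceeds by comparing the Cartan decomposition of $\h$ with that of $\g$: it uses Lemma~\ref{lema roots of liea cap lieh and liea} to describe $\Sigma_\h$ via restrictions of roots in $\Sigma$, decomposes the Weyl chamber $\liea_\h^+$ of $\h$ into pieces lying in various $\g$-Weyl chambers $\liea_i^+$, invokes a result of Schlichtkrull to show that the Weyl group elements $w_i$ carrying $\liea_i^+$ to $\liea^+$ lie in $\w^\sigma$, and concludes that $\mu(h)=w_i\cdot\mu_\h(h)\in\liea_\h$. Your argument instead observes directly that the hypothesis $\liea_\h\cap\tn{int}(\liea^+)\neq\emptyset$ strengthens the $\sigma$-compatibility of Remark~\ref{rem: sigma compatible and regular} to full $\sigma$-invariance $\sigma(\Sigma^+)=\Sigma^+$, hence $\sigma(\liea^+)=\liea^+$; combined with $\sigma(\ko)=\ko$ (immediate from $\sigma\tau=\tau\sigma$), the involution $\sigma$ preserves every ingredient of the $\g$-Cartan decomposition, and uniqueness of $\mu$ finishes the job.

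Your approach is shorter and entirely self-contained, avoiding both the preparatory lemma on $\Sigma_\h$ and the external reference. The paper's argument, on the other hand, yields the finer byproduct that $\mu|_\h$ and $\mu_\h$ differ only by elements of $\w^\sigma$; this is not used elsewhere in the paper, but it makes the relation between the two Cartan projections explicit.
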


\begin{proof}

As $\liea_\h$ contains a regular element of $\liea$, for every $\alpha\in\Sigma$ we have $\alpha\vert_{\liea_\h}\neq 0$.
Lemma \ref{lema roots of liea cap lieh and liea} then implies
$$ \Sigma_\h=\lbrace  \widehat{\alpha}|_{\liea_\h} \colon \ \widehat{\alpha}\in\Sigma \tn{ and } \lieg_{\widehat{\alpha}}\not\subset\lieq\rbrace.$$
\noindent Let $\Sigma_\h^+$ be a positive system of $\Sigma_\h$ and $\liea_\h^+\subset\liea_\h$ be the corresponding Weyl chamber.
Further, we pick $\liea_\h^+$ in such a way that it intersects $\tn{int}(\liea^+)$.
Moreover, the set $\liea_\h^+\setminus\left(\cup_{\widehat{\alpha}\in\Sigma}\ker\widehat{\alpha}\right)$ is non-empty and has a finite number of connected components, consisting only of regular elements of $\liea$.
Hence, to each of these connected components corresponds a unique Weyl chamber of $\Sigma$, one of which is $\liea^+$.
We denote these Weyl chambers by $\liea^+=\liea_1^+,\dots,\liea_k^+$.
By Remark \ref{rem: sigma compatible and regular}, the positive system associated to each of the $\liea_i^+$ is $\sigma$-compatible.
Applying Schlichtkrull \cite[Lemma 7.1.6]{Sch}, we conclude that for every $i=1,\dots,k$, the element $w_i\in\w$ taking $\liea_i^+$ to $\liea^+$ belongs to $\w^\sigma$.

On the other hand, we have the Cartan decomposition $$\h=(\ko\cap\h)\exp(\liea_\h^+)(\ko\cap\h),$$ \noindent and a corresponding Cartan projection $\mu_\h:\h\to\liea_\h^+$.
Let $h\in\h$ and write $h=k\exp(\mu_\h(h))l$, with $k,l\in\ko\cap\h$.
There is some $i=1,\dots,k$ so that $\mu_\h(h)\in\liea_i^+$ and therefore $w_i\cdot \mu_\h(h)\in\liea^+$.
As $w_i$ lifts to an element in $\ko$, we have $w_i\cdot \mu_\h(h) =\mu(h)$.
Since $w_i\in\w^\sigma$, the proof is finished.
\end{proof}




\subsection{Flag manifolds}\label{subsec: flags}

Let $\theta\subset\Delta$ be a non-empty subset.
We let
\[\liep_\theta \coloneqq \!\!\! \bigoplus_{\alpha\in\Sigma^+ \cup \langle \Delta \setminus \theta \rangle} \!\!\! \lieg_\alpha, \qquad \liep_\theta^- \coloneqq \!\!\! \bigoplus_{\alpha\in\Sigma^+ \cup \langle \Delta \setminus \theta \rangle} \!\!\! \lieg_{-\alpha},\]
where $\langle\Delta\setminus\theta\rangle$ denotes the set of roots (including 0) in the span of $\Delta\setminus\theta$.
We let $\p_\theta$ and $\p_\theta^-$ be the corresponding (opposite) parabolic subgroups of $\g$.
Observe that $\p_\theta^-$ is conjugate to $\p_{\iota(\theta)}$.
We say that $\p_\theta$ is \textit{self-opposite} if $\iota(\theta)=\theta$.


\begin{rem}\label{rem: action of tau and w0 on lieptheta}
Suppose that $\theta$ satisfies $\iota(\theta)=\theta$.
Then (for any lift of $w_0$ to $\n_\ko(\liea)$) one has $\tau(\liep_\theta)=\tn{Ad}_{w_0}(\liep_\theta)$, where $\tn{Ad}:\g\to\tn{Aut}(\lieg)$ is the adjoint representation.
In particular, $\tau(\p_\theta)=w_0\p_\theta w_0$.
\end{rem}

The \textit{flag manifolds} are the $\g$--homogeneous spaces
\[\f_\theta:=\g/\p_\theta \tn{ and } \f_\theta^-:=\g/\p_\theta^-,\]
\noindent which are also $\ko$--homogeneous thanks to the Iwasawa Decomposition Theorem.
For $\theta=\Delta$ we let $\f:=\f_\Delta\cong\f_\Delta^-$, $\bor:=\p_\Delta$ and $\bor^-:=\p_\Delta^-$.

The $\g$-orbit of $(\p_\theta^-,\p_\theta)$ is the unique open orbit of the action $\g\curvearrowright\f_\theta^-\times\f_\theta$.
We say that $\xi_-\in\f_\theta^-$ is \textit{transverse} to $\xi_+\in\f_\theta$ if $(\xi_-,\xi_+)$ belongs to this open orbit.


\begin{ex}\label{ex: flags sld}
Let $\g=\PSL(V)$, where $V$ is a real (resp. complex) vector space of dimension $d\geq 2$.
The Lie algebra of $\g$ is the space of traceless linear operators on $V$.
A maximal compact subgroup $\ko$ is the subgroup of orthogonal (resp. unitary) matrices with respect to an inner (resp. Hermitian inner) product $\langle\cdot,\cdot\rangle$ on $V$.
The choice of a Cartan subspace $\liea \subset \liep$ and a Weyl chamber $\liea^+ \subset \liea$ corresponds to the choice of an ordered orthonormal basis $(e_1,\dots,e_d)$ for $V$.
Then $\liea$ is the subalgebra of diagonal matrices with respect to this basis, and $\liea^+$ consists of diagonal matrices whose diagonal entries are distinct and in descending order.

If $\lambda_j(A)$ denotes the eigenvalue of $A\in\liea$ in direction $e_j$, and $\alpha_{ij} \coloneqq \lambda_i-\lambda_j$ for $i \neq j$, then
\[\Sigma=\{\alpha_{ij} \colon i\neq j\}, \qquad \Sigma^+=\{\alpha_{ij} \colon i< j\}, \qquad \Delta=\{\alpha_{i,i+1} \colon 1 \leq i \leq d -1\}.\]
Sometimes we will write the elements of $\Delta$ by $\alpha_i \coloneqq \alpha_{i,i+1}$.
The opposition involution maps $\lambda_j$ to $-\lambda_{d+1-j}$ and accordingly $\alpha_j$ to $\alpha_{d-j}$.

The choice of $\theta$ corresponds to the choice of a subset $\{i_1, \dots, i_k\} \subset \{1, \dots, d-1\}$, where $i_1 < \cdots < i_k$.
Then $\f_\theta$ identifies with the space of \textit{partial flags} indexed by $\theta$, that is, the space of sequences of the form $\xi_+=(\xi_+^{i_1}\subset\dots\subset\xi_+^{i_k})$, where $\xi_+^{i_j}$ is a linear subspace of $V$ of dimension $i_j$.
A flag $\xi_-\in\f_\theta^-$ is transverse to $\xi_+$ if and only if the sum $\xi_-^{d-i_j}+\xi_+^{i_j}$ is direct for all $j=1,\dots,k$.
When $\theta=\{k\}$, we use the special notation $\grass_k(V):=\f_\theta$ for the corresponding flag manifold, which is the \textit{Grassmannian} of $k$-dimensional subspaces of $V$.
If $k = 1$, it coincides with the projective space $\mathbb{P}(V)$.
\end{ex}

\section{Relative positions}\label{sec: relative positions}

Fix a non-empty subset $\theta\subset\Delta$ so that $\iota(\theta)=\theta$.
Throughout this section we take $\x = \g/\h$ to be any $\g$--homogeneous space for which the action $\g\curvearrowright\f_\theta\times\x$ has finitely many orbits.
For instance, this is always the case when $\h$ is a parabolic subgroup of $\g$ (see Knapp \cite[Theorem 7.40]{Kna}), or when $\h$ is symmetric (see Wolf \cite{Wol}).

\subsection{Relative positions and Bruhat order}\label{subsec: relative positions and order}

The $\g$-orbit of a point $(\xi,x)$ in $\f_\theta\times \x$ is called the \textit{relative position} between $\xi$ and $x$, and is denoted by $\pos(\xi,x)$.
The map
\[\pos(g_1\p_\theta,g_2\h)\mapsto\p_\theta g_1^{-1}g_2\h\]
induces an identification between the set of relative positions and the double quotient $\p_\theta\backslash\g/\h$.
In particular, the set of relative positions identifies with the set of $\p_\theta$-orbits on $\x = \g/\h$ and also with the set of $\h$-orbits on $\f_\theta = \g/\p_\theta$.
We will use these identifications throughout the paper.

The set $\p_\theta\backslash\g/\h$ carries a natural partial order $\leq$, which intuitively arranges relative positions according to their degree of ``genericity".
More precisely, for a pair of relative positions $\bfp$ and $\bfp'$ we will write $\bfp\leq\bfp'$ if there exist sequences $\xi_p\to\xi$ in $\f_\theta$ and $x_p\to x$ in $\x$ so that
\[\pos(\xi_p,x_p)=\bfp' \tn{ and } \pos(\xi,x)=\bfp.\]
Equivalently, representing $\bfp=\p_\theta g\h$ and $\bfp'=\p_\theta g'\h$ with $g,g'\in\g$	 we have
\[\bfp \leq \bfp' \quad \Leftrightarrow \quad \p_\theta g\h \subset\overline{\p_\theta g'\h}.\]
\begin{lem}\label{lem: partial order}
Assume that $\p_\theta\backslash\g/\h$ is finite.
Then $\leq $ defines a partial order on $\p_\theta\backslash\g/\h$.
\end{lem}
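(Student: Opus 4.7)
The plan is to verify the three axioms of a partial order for the relation $\leq$ on $\p_\theta \backslash \g / \h$, reading everything through the identification of relative positions with $\g$-orbits on $\f_\theta \times \x$. I will write $O_\bfp \subset \f_\theta \times \x$ for the orbit corresponding to a relative position $\bfp$, so that $\bfp \leq \bfp'$ is equivalent to $O_\bfp \subset \overline{O_{\bfp'}}$.

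First, reflexivity is immediate from constant sequences. Transitivity is also straightforward: if $O_\bfp \subset \overline{O_{\bfp'}}$ and $O_{\bfp'} \subset \overline{O_{\bfp''}}$, then monotonicity of closure gives $\overline{O_{\bfp'}} \subset \overline{O_{\bfp''}}$ and hence $O_\bfp \subset \overline{O_{\bfp''}}$.

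The substantive step is antisymmetry, and this is precisely where the finiteness assumption is used. Suppose $\bfp \leq \bfp'$ and $\bfp' \leq \bfp$; then $\overline{O_\bfp} = \overline{O_{\bfp'}} =: E$. The key claim I will establish is that every $\g$-orbit in $\f_\theta \times \x$ is \emph{locally closed}, i.e., open in its closure. Granting this, $O_\bfp$ and $O_{\bfp'}$ are two open dense subsets of $E$; since they are also $\g$-orbits (hence either equal or disjoint) and two nonempty open dense sets in any topological space must intersect, we conclude $O_\bfp = O_{\bfp'}$, i.e., $\bfp = \bfp'$.

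To prove local closedness, I would apply the Baire category theorem to the closure $E = \overline{O_\bfp}$. This closure is a closed subset of the locally compact Hausdorff space $\f_\theta \times \x$, hence is itself a Baire space. By hypothesis there are only finitely many $\g$-orbits in $\f_\theta \times \x$, so $E$ decomposes as a finite union of $\g$-orbits $E = O_1 \cup \cdots \cup O_k$. By Baire, at least one $O_i$ has nonempty interior $U$ in $E$. Since $\g$ acts by homeomorphisms of $E$ and transitively on $O_i$, the translates $g \cdot U$ are open in $E$ and cover $O_i$; thus $O_i$ is open in $E$. Finally, $O_\bfp$ is dense in $E$ by definition, so $O_\bfp$ meets the nonempty open set $O_i$, forcing $O_\bfp = O_i$. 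Hence $O_\bfp$ is open in $\overline{O_\bfp}$, which is the desired local closedness.

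The only potential obstacle is checking that the Baire property applies to $E$, but this is automatic because $\f_\theta \times \x$ is a smooth manifold (so locally compact Hausdorff) and $E$ is closed in it. The argument uses nothing about the specific geometry of $\f_\theta$ or $\x$ beyond this and the finiteness of $\p_\theta \backslash \g / \h$, which is exactly the standing assumption.
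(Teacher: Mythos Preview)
Your overall strategy is the same as the paper's: show that each orbit is open in its closure, then conclude antisymmetry because two dense open orbits in the same closure must meet and hence coincide. Reflexivity and transitivity are fine.

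The gap is in the sentence ``By Baire, at least one $O_i$ has nonempty interior $U$ in $E$.'' The Baire category theorem only tells you that $E$ is not a finite union of \emph{nowhere dense} sets, i.e.\ that some $\overline{O_i}$ has nonempty interior in $E$; it does not say that $O_i$ itself has nonempty interior. The partition $\rr = \mathbb{Q} \cup (\rr\setminus\mathbb{Q})$ shows that a Baire space can be split into finitely many pieces each with empty interior, so this step genuinely needs more input than the topology of $E$ and the finiteness of the decomposition. Your later use of homogeneity (translating an interior point around) is fine, but it only kicks in once you already have one interior point.

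What is missing is some regularity of the orbits. The paper supplies this by observing that each orbit $\p_\theta g \h$ (equivalently each $O_i$) is an immersed submanifold, hence a countable union of embedded submanifolds, each of which is locally closed; a locally closed set is either nowhere dense or contains a nonempty open subset, and now Baire does give an orbit with nonempty interior in $E$. Inserting this observation (or any other argument producing local closedness of the orbits a priori) makes your proof correct, and at that point it coincides with the paper's.
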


\begin{proof}
  The only thing to show is that $\overline{\p_\theta g \h} = \overline{\p_\theta g' \h}$ implies $\p_\theta g \h = \p_\theta g' \h$.
  We do this by proving that every orbit closure $\overline{\p_\theta g \h}$ contains a unique relatively open orbit, which is $\p_\theta g \h$.
  Note that every orbit $\p_\theta g \h \subset \g$ is an immersed submanifold and as such a countable union of embedded submanifolds. 
  Embedded submanifolds are always locally closed (i.e. the intersection of an open and a closed set).
  It is easy to see that a locally closed set is either nowhere dense or contains a non--empty open subset.

  By assumption $\overline{\p_\theta g \h}$ consists of finitely many orbits and hence countably many locally closed sets, which are also locally closed as subsets of $\overline{\p_\theta g \h}$.
  By the Baire Category Theorem, these cannot all be nowhere dense, so some double coset $\p_\theta g' \h \subset \overline{\p_\theta g \h}$ contains a subset which is open in $\overline{\p_\theta g \h}$.
  But then $\p_\theta g' \h$ must be open in $\overline{\p_\theta g \h}$, and in particular intersect the dense subset $\p_\theta g \h$.
  So $\p_\theta g' \h = \p_\theta g \h$, and this is the unique open orbit in $\p_\theta g \h$.
\end{proof}

The following lemma is a direct consequence of the above.

\begin{lem}\label{lem: maximal positions and openess}
A relative position $\bfp=\p_\theta g\h$ is maximal (resp. minimal) for the partial order $\leq$ if and only if the set $\p_\theta g\h$ is open (resp. closed) in $\g$.
Equivalently, for some (every) $x\in\x$ the set
\[\{ \xi\in\f_\theta: \pos(\xi,x)=\bfp \}\]
is open (resp. closed) in $\f_\theta$.
\end{lem}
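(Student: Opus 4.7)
The plan is to split the statement into two independent halves: first, maximality (resp. minimality) of $\bfp$ in the Bruhat order is equivalent to openness (resp. closedness) of the double coset $\p_\theta g \h$ in $\g$; second, these topological properties on $\g$ correspond to openness (resp. closedness) of the fiber $\{\xi \in \f_\theta : \pos(\xi,x) = \bfp\}$ in $\f_\theta$, independently of the choice of $x \in \x$. A useful preliminary extracted directly from the proof of Lemma \ref{lem: partial order} is that each closure $\overline{\p_\theta g \h}$ contains $\p_\theta g \h$ as its unique relatively open $\p_\theta$-$\h$ double coset, so $\overline{\p_\theta g \h} \setminus \p_\theta g \h$ is a finite union of double cosets $\p_\theta g' \h$ with $\bfp' < \bfp$ strictly.

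For the first half, the minimal case follows immediately from this preliminary: $\bfp$ is minimal if and only if there are no strictly lower positions, if and only if $\overline{\p_\theta g \h} = \p_\theta g \h$. For the maximal case, I would use that every double coset, and hence its closure, is left-$\p_\theta$- and right-$\h$-invariant; therefore, for any other position $\bfp' = \p_\theta g' \h$, either $\p_\theta g \h \subset \overline{\p_\theta g' \h}$ or the two sets are disjoint. Under maximality of $\bfp$, the first alternative is ruled out for $\bfp' \neq \bfp$, so the complement of $\p_\theta g \h$ in $\g$ equals $\bigcup_{\bfp' \neq \bfp} \overline{\p_\theta g' \h}$, a finite union of closed sets; hence $\p_\theta g \h$ is open. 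Conversely, if $\p_\theta g \h$ is open and $\bfp \leq \bfp'$, then $\p_\theta g \h$ is a non-empty open subset of $\overline{\p_\theta g' \h}$, and the unique-open-orbit property forces $\bfp = \bfp'$.

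For the second half, one may fix $x = o$ by $\g$-equivariance, since the $\g$-action on $\x$ is by homeomorphisms and relates the fibers over different basepoints. The fiber in question is then the image $\pi(\h g^{-1} \p_\theta)$ under the quotient map $\pi : \g \to \f_\theta$; since $\pi$ is an open surjective continuous map and $\h g^{-1} \p_\theta$ is right-$\p_\theta$-saturated, its image is open (resp. closed) in $\f_\theta$ if and only if $\h g^{-1} \p_\theta$ is open (resp. closed) in $\g$. The homeomorphism $g \mapsto g^{-1}$ of $\g$ identifies $\h g^{-1} \p_\theta$ with $\p_\theta g \h$, completing the equivalence. I expect the main subtle point to be the biinvariance argument in the maximality case, which is what turns the Bruhat order, defined combinatorially through the poset of orbit closures, into a statement about the topology of the orbit stratification of $\g$; beyond this, everything is routine topology given the preliminary observation and Lemma \ref{lem: partial order}.
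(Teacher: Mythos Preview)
Your proposal is correct and spells out precisely the argument the paper leaves implicit: the paper simply states that the lemma is ``a direct consequence of the above'' (i.e.\ of Lemma~\ref{lem: partial order} and its proof), and your argument is exactly the unpacking of that consequence, using the unique-relatively-open-orbit fact from the proof of Lemma~\ref{lem: partial order} together with the biinvariance of orbit closures and standard properties of the quotient map $\g \to \f_\theta$. There is nothing to add.
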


Before continuing with the theory let us examine some examples.

\begin{ex}\label{ex: partial order when h parabolic}

The previous notions have been well studied when $\h$ equals a parabolic subgroup $\p_{\theta'}$, for some non-empty $\theta'\subset\Delta$.
Indeed, it follows from Bruhat Decomposition (see e.g. \cite[Chapter VII]{Kna}) that the set of relative positions $\p_\theta\backslash\g/\p_{\theta'}$ identifies with $$\w_{\theta,\theta'}:=\langle \Delta\setminus\theta\rangle \backslash \w /\langle \Delta\setminus\theta'\rangle,$$
\noindent where $\langle S\rangle \subset\w$ denotes the subgroup generated by the subset $S\subset\w$.
Concretely, one has $\langle \Delta\setminus \theta\rangle =\p_\theta\cap\w$.
The relative position associated to a Weyl group element $w$ is denoted by $[w]\in\w_{\theta,\theta'}$.

In this case the partial order between relative positions has a nice combinatorial description in terms of words in the generating set $\{s_\alpha \colon \alpha\in\Delta\}$.
If $w=s_{\alpha_1}\cdots s_{\alpha_k}$ is a reduced expression of a Weyl group element, then a relative position $[w']\in\w_{\theta,\theta'}$ is below $[w]$ if and only it can be represented as a sub-word of $s_{\alpha_1}\cdots s_{\alpha_k}$, i.e.
\[w'=s_{\alpha_1}^{i_1} \cdots s_{\alpha_k}^{i_k},\]
with $i_j\in\{0,1\}$ for all $j=1,\dots,k$.
In particular, there is a unique minimal position (represented by $1\in\w$), and a unique maximal one (represented by $w_0$). For more details, see e.g. \cite[Proposition 2.2.13]{SteThesis}. A particular case is shown in Figure \ref{fig: intro} left.
\end{ex}

\begin{ex}\label{ex: relative positions in group manifolds}
Let $\g_0$ be a linear, connected, finite center, semisimple Lie group without compact factors and with Lie algebra $\lieg_0$.
Consider $\g:=\g_0\times\g_0$ and let $\h<\g$ be the diagonal embedding of $\g_0$ into $\g$.
Then $\h$ equals the fixed point set of the involution
\[\sigma:\g\to\g: \sigma(g_{\tn{L}},g_{\tn{R}}):=(g_{\tn{R}},g_{\tn{L}}),\]
and the homogeneous space $\x \coloneqq \g/\h$ can be identified with $\g_0$. It is called the \emph{group manifold} associated to $\g_0$. Here $\g$ acts on $\g_0$ by
\[(g_{\tn{L}},g_{\tn{R}})\cdot g:=g_{\tn{L}}gg_{\tn{R}}^{-1}.\]

If $\tau_0$ is a Cartan involution of $\g_0$, then $\tau:=(\tau_0,\tau_0)$ is a Cartan involution of $\g$, which commutes with $\sigma$.
Let further $\liea_0$ be a Cartan subspace of $\g_0$, $\Sigma_0$ the set of roots of $\liea_0$ on $\lieg_0$, $\Sigma_0^+ \subset \Sigma_0$ a positive system and $\Delta_0 \subset \Sigma_0^+$ the set of simple roots.
Then $\liea \coloneqq \liea_0 \times \liea_0$ is a Cartan subspace of $\g$, the set of roots of $\liea$ on $\lieg$ is
\[\Sigma=\{(\alpha,0) \colon \alpha\in\Sigma_0\}\cup\{(0,\alpha) \colon \alpha\in\Sigma_0\}\]
(where $(\alpha,0)$ is the functional mapping $(A_{\tn{L}},A_{\tn{R}})$ to $\alpha(A_{\tn{L}})$ etc.) and
\[\Sigma^+ \coloneqq \{(\alpha,0) \colon \alpha\in\Sigma_0^+\}\cup\{(0,\alpha) \colon \alpha\in\Sigma_0^+\}\]
is a positive system.

Fix also two not simultaneously empty subsets $\theta_{\tn{L}}, \theta_{\tn{R}}\subset\Delta_0$, so that the corresponding flag manifolds $\f_{\theta_{\tn{L}}}$ and $\f_{\theta_{\tn{R}}}$ of $\g_0 $ are self-opposite.
Then $\f_{(\theta_{\tn{L}},\theta_{\tn{R}})}=\f_{\theta_{\tn{L}}}\times\f_{\theta_{\tn{R}}}$ is a self-opposite flag manifold of $\g$.
We have a one-to-one correspondence
\[\g\backslash( \f_{(\theta_{\tn{L}},\theta_{\tn{R}})} \times\x)\to\g_0\backslash(\f_{\theta_{\tn{L}}}\times\f_{\theta_{\tn{R}}}) \colon\quad \pos(\xi_{\tn{L}},\xi_{\tn{R}},g)\mapsto\pos(\xi_{\tn{L}},g\cdot\xi_{\tn{R}}).\]
It preserves the corresponding partial orders.
In particular, since there is a unique minimal position in $\g_0\backslash(\f_{\theta_{\tn{L}}}\times\f_{\theta_{\tn{R}}})$, the same holds in $\g\backslash( \f_{(\theta_{\tn{L}},\theta_{\tn{R}})} \times\x)$.


\end{ex}

\begin{ex}\label{ex: relative positions in hpq}
Let $2\leq p\leq q$ be two integers and fix a quadratic form of signature $(p,q)$ on $\rr^{p+q}$.
Then let $\g=\mathsf{PSO}_0(p,q)$ be the identity component the projectivized group of matrices preserving this quadratic form.

All flag manifolds of $\g$ are self-opposite, and they are parametrized by sequences $1\leq i_1<\dots< i_l\leq p$.
For such a sequence, the corresponding flag manifold is the space of sequences of the form
\[\xi^{i_1}\subset\dots\subset\xi^{i_l},\]
where $\xi^{i_j}$ is a totally isotropic subspace of $\rr^{p+q}$ of dimension $i_j$.

Let $\x$ be the \textit{pseudo-Riemannian hyperbolic space of signature $(p,q-1)$}, i.e, the space of negative lines for the underlying quadratic form.
It is a symmetric space, usually denoted by $\mathbb{H}^{p,q-1}$.
For $p=q=2$, there is a natural identification between this space and the group manifold $\PSL_2(\rr)$.

A natural boundary for $\mathbb{H}^{p,q-1}$ is $\partial\mathbb{H}^{p,q-1}$, the space of isotropic lines in $\rr^{p+q}$.
There are two relative positions in $\g \backslash(\partial\mathbb{H}^{p,q-1} \times \mathbb{H}^{p,q-1})$, as a point of $\mathbb{H}^{p,q-1}$ may or may not belong to the hyperplane $\xi^{\perp}$, for $\xi\in\partial\mathbb{H}^{p,q-1}$.
\end{ex}

\begin{ex}\label{ex: rel position pairs of complementary subspaces}

Let $\g = \SL(V)$ where $V$ is a real vector space of dimension $d\geq 2$.
Fix $1\leq p \leq q < d$ with $p + q = d$.
We consider the $\g$--space
\[\x \coloneqq \{(U^+,U^-)\in\grass_p(V)\times\grass_{q}(V) \mid U^+\oplus U^-=V  \}.\]
It is convenient to choose a basis $e_1,\dots,e_d$ of $V$ and write $U_o^+ = \langle e_1, \dots, e_{p} \rangle$ and $U_o^- = \langle e_{p+1}, \dots, e_d \rangle$.
Let $\h < \g$ be the stabilizer of the pair $(U_o^+,U_o^-)$ in $\x$.
It is the fixed point set of the involution $\sigma(g) = JgJ$ of $\g$, where $J$ is the diagonal matrix defined by $J|_{U^+_o} = 1$ and $J|_{U^-_o} = -1$.
In particular, $\x \cong \g/\h$ is a symmetric space.
In the basis $e_1, \dots, e_d$, the elements of $\h$ are the matrices which split into a $p \times p$ and a $q \times q$ block and have determinant one.
We also write $\h = \mathsf{S}(\GL_p(\rr) \times \GL_q(\rr))$.

The Cartan involution $\tau \colon \g \to \g$ given by the inverse transpose in the basis $e_1,\dots,e_d$ commutes with $\sigma$.
The Cartan subspace $\liea$ consisting of traceless diagonal matrices is contained in $\lieh$ and therefore $\sigma$ acts trivially on it.
We let $\theta=\Delta$, hence $\f=\f_\theta$ is the space of full flags in $V$ (recall Example \ref{ex: flags sld}).

The picture of the partial order for the case $p=1$ and $q=3$ is sketched in Figure \ref{fig: lines hyperplanes R4}.
There are four minimal positions, represented by $1, w_0,w_{13} ,w_0w_{13}$ (where $w_{ij}\in\w$ is the element that acts on $\{1,\dots ,d\}$ by transposing $i$ with $j$).
\end{ex}

\begin{figure}
  \begin{center}
  \begin{tikzpicture}[scale=0.7]
    \begin{scope}[shift={(0,0)}]
      \draw[thick,red] (0,0) -- (-1,0) -- (0,0.8);
      \draw[thick,blue] (0,0) -- (0,-1) -- (1,-0.2) -- (1,0.8);
      \draw[thick,red,fill=white] (0,0) -- (1,0) -- (2,0.8) -- (1,0.8);
      \draw[thick,blue] (0,0) -- (0,1) -- (1,1.8) -- (1,0.8);
      \draw[black,dashed] (0,0) -- (1,0.8);

      \fill[blue] (1.5,1.5) circle (0.1);
      \draw[thick,red] (-0.75,0.2) -- (0,0.2);
      \draw[thick,red] (0.25,0.2) -- (1.25,0.2);
      \fill[red] (0.75,0.2) circle (0.1);
    \end{scope}

    \begin{scope}[shift={(-2,-4)}]
      \draw[thick,red] (0,0) -- (-1,0) -- (0,0.8);
      \draw[thick,blue] (0,0) -- (0,-1) -- (1,-0.2) -- (1,0.8);
      \draw[thick,red,fill=white] (0,0) -- (1,0) -- (2,0.8) -- (1,0.8);
      \draw[thick,blue] (0,0) -- (0,1) -- (1,1.8) -- (1,0.8);
      \draw[black,dashed] (0,0) -- (1,0.8);

      \fill[blue] (1.25,0.6) circle (0.1);
      \draw[thick,red] (-0.75,0.2) -- (0,0.2);
      \draw[thick,red] (0.25,0.2) -- (1.25,0.2);
      \fill[red] (0.75,0.2) circle (0.1);
    \end{scope}

    \begin{scope}[shift={(2,-4)}]
      \draw[thick,red] (0,0) -- (-1,0) -- (0,0.8);
      \draw[thick,blue] (0,0) -- (0,-1) -- (1,-0.2) -- (1,0.8);
      \draw[thick,red,fill=white] (0,0) -- (1,0) -- (2,0.8) -- (1,0.8);
      \draw[thick,blue] (0,0) -- (0,1) -- (1,1.8) -- (1,0.8);
      \draw[black,dashed] (0,0) -- (1,0.8);

      \fill[blue] (1.5,1.5) circle (0.1);
      \draw[thick,red] (-0.75,0.2) -- (0,0.2);
      \draw[thick,red] (0.25,0.2) -- (1.25,0.2);
      \fill[red] (0.25,0.2) circle (0.1);
    \end{scope}

    \begin{scope}[shift={(-4,-8)}]
      \draw[thick,red] (0,0) -- (-1,0) -- (0,0.8);
      \draw[thick,blue] (0,0) -- (0,-1) -- (1,-0.2) -- (1,0.8);
      \draw[thick,red,fill=white] (0,0) -- (1,0) -- (2,0.8) -- (1,0.8);
      \draw[thick,blue] (0,0) -- (0,1) -- (1,1.8) -- (1,0.8);
      \draw[black,dashed] (0,0) -- (1,0.8);
\put (-120,-270) { $1$}

      \draw[thick,red] (0.5,0) -- (1.5,0.8);
      \fill[blue] (1.25,0.6) circle (0.1);
      \fill[red] (0.75,0.2) circle (0.1);
    \end{scope}
\put (-45,-270) { $w_0w_{13}$}

    \begin{scope}[shift={(0,-8)}]
      \draw[thick,red] (0,0) -- (-1,0) -- (0,0.8);
      \draw[thick,blue] (0,0) -- (0,-1) -- (1,-0.2) -- (1,0.8);
      \draw[thick,red,fill=white] (0,0) -- (1,0) -- (2,0.8) -- (1,0.8);
      \draw[thick,blue] (0,0) -- (0,1) -- (1,1.8) -- (1,0.8);
      \draw[black,dashed] (0,0) -- (1,0.8);
\put (40,-270) { $w_{13}$}

      \fill[blue] (1.25,0.6) circle (0.1);
      \draw[thick,red] (-0.75,0.2) -- (0,0.2);
      \draw[thick,red] (0.25,0.2) -- (1.25,0.2);
      \fill[red] (0.25,0.2) circle (0.1);
    \end{scope}
    \put (120,-270) { $w_0$}

    \begin{scope}[shift={(4,-8)}]
      \draw[thick,red] (0,0) -- (-1,0) -- (0,0.8);
      \draw[thick,blue] (0,0) -- (0,-1) -- (1,-0.2) -- (1,0.8);
      \draw[thick,red,fill=white] (0,0) -- (1,0) -- (2,0.8) -- (1,0.8);
      \draw[thick,blue] (0,0) -- (0,1) -- (1,1.8) -- (1,0.8);
      \draw[black,dashed] (0,0) -- (1,0.8);

      \fill[blue] (1.5,1.5) circle (0.1);
      \draw[thick,red] (0,0) -- (1,0.8);
      \fill[red] (0.25,0.2) circle (0.1);
    \end{scope}

    \begin{scope}[shift={(-6,-12)}]
      \draw[thick,red] (0,0) -- (-1,0) -- (0,0.8);
      \draw[thick,blue] (0,0) -- (0,-1) -- (1,-0.2) -- (1,0.8);
      \draw[thick,red,fill=white] (0,0) -- (1,0) -- (2,0.8) -- (1,0.8);
      \draw[thick,blue] (0,0) -- (0,1) -- (1,1.8) -- (1,0.8);
      \draw[black,dashed] (0,0) -- (1,0.8);

      \draw[thick,red] (0.5,0) -- (1.5,0.8);
      \fill[blue] (1.25,0.7) arc (90:270:0.1) -- cycle;
      \fill[red] (1.25,0.5) arc (-90:90:0.1) -- cycle;
    \end{scope}

    \begin{scope}[shift={(-2,-12)}]
      \draw[thick,red] (0,0) -- (-1,0) -- (0,0.8);
      \draw[thick,blue] (0,0) -- (0,-1) -- (1,-0.2) -- (1,0.8);
      \draw[thick,red,fill=white] (0,0) -- (1,0) -- (2,0.8) -- (1,0.8);
      \draw[thick,blue] (0,0) -- (0,1) -- (1,1.8) -- (1,0.8);
      \draw[black,dashed] (0,0) -- (1,0.8);

      \draw[thick,red] (-0.25,0) -- (0,0.1);
      \draw[thick,red] (0.25,0.2) -- (1.75,0.8);
      \fill[blue] (1.25,0.6) circle (0.1);
      \fill[red] (0.25,0.2) circle (0.1);
    \end{scope}

    \begin{scope}[shift={(2,-12)}]
      \draw[thick,red] (0,0) -- (-1,0) -- (0,0.8);
      \draw[thick,blue] (0,0) -- (0,-1) -- (1,-0.2) -- (1,0.8);
      \draw[thick,red,fill=white] (0,0) -- (1,0) -- (2,0.8) -- (1,0.8);
      \draw[thick,blue] (0,0) -- (0,1) -- (1,1.8) -- (1,0.8);
      \draw[black,dashed] (0,0) -- (1,0.8);

      \fill[blue] (1.25,0.6) circle (0.1);
      \draw[thick,red] (0,0) -- (1,0.8);
      \fill[red] (0.25,0.2) circle (0.1);
    \end{scope}

    \begin{scope}[shift={(6,-12)}]
      \draw[thick,red] (-1,0) -- (1,0) -- (2,0.8) -- (0,0.8) -- cycle;
      \draw[thick,blue] ($(-1,0)+(-0.135,-0.05)$) -- ($(1,0)+(0.01,-0.05)$) -- ($(2,0.8)+(0.135,0.05)$) -- ($(0,0.8)+(-0.01,0.05)$) -- cycle;

      \fill[blue] (1.5,1.5) circle (0.1);
      \draw[thick,red] (-0.75,0.2) -- (1.25,0.2);
      \fill[red] (0.75,0.2) circle (0.1);
    \end{scope}

    \draw[black,thick,->]  (1,-1.3) -- (2,-2.3);
    \draw[black,thick,->]  (0.2,-1.3) -- (-0.8,-2.3);

    \draw[black,thick,->]  (-2,-5.2) -- (-2.8,-6);
    \draw[black,thick,->]  (-1.1,-5.2) -- (-0.3,-6);
    \draw[black,thick,->]  (2,-5.2) -- (1,-6);
    \draw[black,thick,->]  (3,-5.2) -- (4,-6);

    \draw[black,thick,->]  (5,-9.3) -- (6,-10.3);
    \draw[black,thick,->]  (4,-9.3) -- (3.2,-10.3);
    \draw[black,thick,->]  (1,-9.3) -- (2,-10.3);
    \draw[black,thick,->]  (0,-9.3) -- (-0.8,-10.3);
    \draw[black,thick,->]  (-3,-9.3) -- (-2,-10.3);
    \draw[black,thick,->]  (-3.9,-9.3) -- (-4.8,-10.3);

    \draw[gray,<->,thick,dashed] (2.2,0.2) arc (220:500:0.7);
    \draw[gray,<->,thick,dashed] (-0.8,-4.5) to[bend right] (1.8,-4.5);
    \draw[gray,<->,thick,dashed,rounded corners] (-2.5,-8.5) -- (-0.8,-9.5) -- (1.8,-9.5) -- (3.5,-8.5);
    \draw[gray,<->,thick,dashed] (2.2,-7.4) arc (270:550:0.7);
    \draw[gray,<->,thick,dashed] (-0.8,-12.5) to[bend right] (1.8,-12.5);
    \draw[gray,<->,thick,dashed,rounded corners] (-4.5,-12.5) -- (-2.8,-14) -- (3.8,-14) -- (5.5,-12.5);
  \end{tikzpicture}
\end{center}
\caption{The partial order for Borel orbits in the space $\x$ of lines transverse to hyperplanes in $\rr^4$.  Complete flags are represented in red, points in $\x$ are represented in blue. Black arrows generate the partial order $\leq$, and dashed gray arrows indicate the involution given by $w_0$.}
\label{fig: lines hyperplanes R4}
\end{figure}

We now go back to the general theory, more examples will be analysed in Examples \ref{ex: fat ideals complementary subspaces and projective} and \ref{ex: fat ideals quadratic forms and projective}.

When $\h=\p_{\theta'}$ is a parabolic subgroup of $\g$, the identification $\p_\theta\backslash\g/\h\cong\w_{\theta,\theta'}$ of Example \ref{ex: partial order when h parabolic} induces an action of the longest element $w_0\in\w$ on the set of relative positions: as $\iota(\theta)=\theta$, the group $\p_\theta\cap\w$ is normalized by $w_0$.
This action is order reversing and has been well studied \cite{KLPdomains,SteIDEALS,SteTreORIENTED}.
In this paper we focus on different classes of homogeneous spaces $\x$, paying special attention to symmetric spaces.
Provided $\h$ is $\tau$-invariant (which is not the case when $\h$ is parabolic but can be assumed to hold when $\h$ is symmetric), the $w_0$-action still makes sense, but is order preserving instead as we now show (see Figure \ref{fig: lines hyperplanes R4} for a picture of how this action looks like in a concrete example).

\begin{prop}\label{prop: action of w0}
Assume that the Cartan involution $\tau$ is chosen in such a way that $\tau(\h)=\h$.
Then the longest element $w_0$ of the Weyl group induces an involution of $\p_\theta\backslash\g/\h$, which is order preserving.
\end{prop}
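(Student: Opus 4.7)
The plan is to define the $w_0$-action on relative positions by the explicit formula
$$w_0\cdot(\p_\theta g\h) := \p_\theta\, w_0\tau(g)\,\h,$$
where $w_0$ on the right is a chosen lift in $\n_\ko(\liea)$, and then verify in turn that this is well defined, squares to the identity, and preserves the Bruhat order. Equivalently, this amounts to showing that the diffeomorphism $\phi:\g\to\g$, $\phi(g):=w_0\tau(g)$, permutes the $(\p_\theta,\h)$-double cosets in $\g$ and is a self-inverse on the quotient.

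First I would verify independence of all choices. Independence of the lift of $w_0$ is immediate, because any two lifts differ by an element of $\m\subset\p_\theta$. For independence of the representative $g$, replacing $g$ by $pgh$ with $p\in\p_\theta$ and $h\in\h$ produces
$$w_0\tau(pgh) = \bigl(w_0\tau(p)w_0^{-1}\bigr)\,w_0\tau(g)\,\tau(h),$$
and by Remark \ref{rem: action of tau and w0 on lieptheta} one has $\tau(\p_\theta)=w_0\p_\theta w_0^{-1}$ (up to absorbing $\m$ on the right), so $w_0\tau(p)w_0^{-1}\in\p_\theta$, while the standing hypothesis $\tau(\h)=\h$ gives $\tau(h)\in\h$. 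Thus $\phi$ maps each $(\p_\theta,\h)$-double coset onto a single $(\p_\theta,\h)$-double coset.

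For the involution property, I would compute $\phi^2(g)=w_0\,\tau(w_0)\,g=w_0^2\,g$, where $\tau(w_0)=w_0$ because $\tau$ restricts to the identity on $\ko$ and $w_0\in\n_\ko(\liea)\subset\ko$. Since $w_0^2\in\m\subset\p_\theta$, the induced map on $\p_\theta\backslash\g/\h$ satisfies $\phi^2 = \mathrm{id}$.

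Finally, order preservation is essentially automatic once $\phi$ is recognized as a homeomorphism permuting double cosets. Such a map commutes with taking closures, so
$$\p_\theta g'\h\subset \overline{\p_\theta g\h}\;\Longleftrightarrow\;\p_\theta \phi(g')\h\subset \overline{\p_\theta \phi(g)\h},$$
which is exactly the assertion $\bfp'\leq\bfp\Leftrightarrow w_0\cdot\bfp'\leq w_0\cdot\bfp$ by the definition of the Bruhat order recalled before Lemma \ref{lem: partial order}. The only subtlety — and the single point where care is required — is the bookkeeping between $w_0$ as a Weyl group element and $w_0$ as a Lie group element; this is precisely what Remark \ref{rem: action of tau and w0 on lieptheta} together with $\m\subset\p_\theta$ is built to handle, and in particular explains why the $w_0$-action turns out to be order \emph{preserving} (in contrast to the flag manifold case where no such Cartan involution exists and the analogous involution is order reversing).
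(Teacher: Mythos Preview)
Your proof is correct and follows essentially the same approach as the paper. The paper picks a representative $k\in\ko$ via Iwasawa and sets $w_0\cdot\bfp:=\p_\theta w_0 k\h$, whereas you define $w_0\cdot\bfp:=\p_\theta w_0\tau(g)\h$ for an arbitrary representative $g$; since $\tau|_\ko=\mathrm{id}$, the two formulas agree, and both rely on the same two ingredients, namely $\tau(\p_\theta)=w_0\p_\theta w_0$ (Remark~\ref{rem: action of tau and w0 on lieptheta}) and $\tau(\h)=\h$. Your packaging via the global homeomorphism $\phi(g)=w_0\tau(g)$ makes the order-preservation step slightly cleaner (it is immediate that a homeomorphism permuting double cosets commutes with closures), while the paper reruns the well-definedness computation a second time for the order; but the underlying argument is the same.
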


\begin{proof}
Let $\bfp\in\p_\theta\backslash\g/\h$.
By the Iwasawa Decomposition, we may write $\bfp=\p_\theta k\h$ for some $k\in\ko$.
We let
\[w_0\cdot\bfp:=\p_\theta w_0k\h.\]
\noindent Observe that this is well defined, independently on which representative for $w_0$ we pick in $\n_\ko(\liea)$.
Moreover, if we write $\bfp=\p_\theta k'\h$ with $k'\in\ko$, then by Remark \ref{rem: action of tau and w0 on lieptheta} we have
\[k'=\tau(k')\in\tau(\p_\theta k\h)=w_0\p_\theta w_0k\tau(\h)=w_0\p_\theta w_0k\h,\]
as $\tau$ preserves $\h$ by assumption.
We then have $\p_\theta w_0k'\h=\p_\theta w_0k\h$, showing that the $w_0$-action on $\p_\theta\backslash\g/\h$ is well defined.

The same argument shows that $w_0$ preserves the partial order.
Indeed, let $\bfp=\bor k\h$ and $\bfp'=\bor k'\h$ be two relative positions with $k,k'\in\ko$, and so that $k\in \overline{\bor k'\h}$.
We have
\[k=\tau(k)\in\overline{\tau(\p_\theta k'\h)}=\overline{w_0\p_\theta w_0 k'\h}.\]
Hence
\[w_0k\in\overline{\p_\theta w_0 k'\h},\]
proving $w_0\cdot \bfp\leq w_0\cdot \bfp'$ as claimed.
\end{proof}

\subsection{Transversely related positions}\label{subsec: transversely related}

Our construction of domains of discontinuity is based on the existence of a symmetric relation on the set of relative positions, which relates pairs of relative positions determined by pairs of transverse flags in $\f_\theta$.
Let $\bfp$ and $\bfp'$ be two relative positions.
We write $\bfp\leftrightarrow\bfp'$ and call these positions \textit{transversely related} if there exist transverse flags $\xi$ and $\xi'$ in $\f_\theta$ and a point $x\in\x$ so that $$\pos(\xi,x)=\bfp \tn{ and } \pos(\xi',x)=\bfp'.$$

Alternatively we have the following description, which allows us to prove useful properties.

\begin{lem}\label{lem: relation with triple cosets}
Let $\bfp=\p_\theta g\h$ and $\bfp'=\p_\theta g'\h$ be two relative positions.
Then $\bfp\leftrightarrow\bfp' $ if and only if $g\in\p_\theta w_0 \p_\theta g'\h$.
\end{lem}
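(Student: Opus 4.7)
\emph{Proof sketch.}
The plan is to translate each of the three pieces of data entering the definition of $\bfp\leftrightarrow\bfp'$ (the two relative positions and the transversality) into a double-coset condition in $\g$, and then to eliminate the auxiliary choices so that only $g$ and $g'$ remain.

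The preliminary step is to encode transversality of a pair of flags $\xi=g_1\p_\theta$ and $\xi'=g_2\p_\theta$ in $\f_\theta$.  Since $\p_\theta$ is self-opposite, conjugation by $w_0$ interchanges $\p_\theta$ and $\p_\theta^-$, so that the assignment $g\p_\theta^-\mapsto gw_0\p_\theta$ is a well-defined $\g$-equivariant isomorphism $\f_\theta^-\to\f_\theta$.  Under this identification the unique open $\g$-orbit on $\f_\theta^-\times\f_\theta$ becomes the $\g$-orbit of $(w_0\p_\theta,\p_\theta)$ in $\f_\theta\times\f_\theta$.  A direct computation with stabilizers then shows that $\xi$ and $\xi'$ are transverse if and only if $g_1^{-1}g_2\in\p_\theta w_0\p_\theta$.

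Now we attack the forward direction.  Since $\g$ acts diagonally on $\f_\theta\times\f_\theta\times\x$ preserving both transversality and relative positions, we may translate the triple $(\xi,\xi',x)$ by an element of $\g$ and assume $x=o$.  Then the position conditions become $\pos(g_i\p_\theta,o)=\p_\theta g_i^{-1}\h$, which rewrite as $g_1\in\h g^{-1}\p_\theta$ and $g_2\in\h (g')^{-1}\p_\theta$.  Writing $g_1=h_1 g^{-1}p_1$ and $g_2=h_2(g')^{-1}p_2$ with $h_i\in\h$ and $p_i\in\p_\theta$, the transversality condition $g_1^{-1}g_2\in\p_\theta w_0\p_\theta$ reduces (after absorbing the $p_i$ into the two flanking $\p_\theta$ factors) to $g\,(h_1^{-1}h_2)\,(g')^{-1}\in\p_\theta w_0\p_\theta$, which upon setting $h:=h_1^{-1}h_2\in\h$ is precisely $g\in\p_\theta w_0\p_\theta g'\h$.

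For the converse, given $g\in\p_\theta w_0\p_\theta g'\h$ one writes $g=p_1 w_0 p_2 g' h$ and exhibits an explicit witness for $\bfp\leftrightarrow\bfp'$: take $x=o$, $g_1:=g^{-1}$, and $g_2:=h^{-1}(g')^{-1}$, so that the two positions are $\p_\theta g\h=\bfp$ and $\p_\theta g'\h=\bfp'$ by construction, while $g_1^{-1}g_2=g h^{-1}(g')^{-1}=p_1 w_0 p_2\in\p_\theta w_0\p_\theta$ guarantees transversality.  The only step requiring real care is the preliminary lemma in Step~1, where transversality—defined via the pairing of $\f_\theta^-$ with $\f_\theta$—must be re-expressed inside $\f_\theta\times\f_\theta$ via the self-oppositeness identification; once this is in hand, everything else is bookkeeping with the definition $\pos(g_1\p_\theta,g_2\h)=\p_\theta g_1^{-1}g_2\h$.
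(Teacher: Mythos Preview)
Your argument is correct and follows essentially the same strategy as the paper: identify transversality of two flags with the double coset $\p_\theta w_0\p_\theta$ and then unwind the definition $\pos(g_1\p_\theta,g_2\h)=\p_\theta g_1^{-1}g_2\h$. The only cosmetic difference is the choice of normalization: the paper uses transitivity on pairs of transverse flags to set $(\xi,\xi')=(\p_\theta,w_0\p_\theta)$ and then reads off the condition on $x$ directly, whereas you normalize $x=o$ and carry the flags $g_1\p_\theta,g_2\p_\theta$ through the computation; both routes involve the same ingredients and the same amount of bookkeeping.
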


\begin{proof}
As $\p_\theta$ and $w_0\p_\theta$ are transverse and the $\g$-action is transitive on the space of pairs of transverse flags in $\f_\theta$, we have $\bfp\leftrightarrow\bfp'$ if and only if there is some $x\in\x$ so that $$\pos(\p_\theta,x)=\bfp \tn{ and } \pos(w_0\p_\theta,x)=\bfp'.$$
\noindent Equivalently, there exist $p,p'\in\p_\theta$ so that $pg\h= x= w_0p'g'\h.$
This finishes the proof.
\end{proof}

As a consequence of Lemma \ref{lem: relation with triple cosets} we have the following property, which states that ``moving up" in the partial order $\leq$ preserves the relation $\leftrightarrow$.

\begin{cor}\label{cor: increasing position keeps the relation}
Let $\bfp$ and $\bfp'$ be two relative positions so that $\bfp\leftrightarrow\bfp'$. Then for every $\bfp''$ satisfying $\bfp''\geq \bfp'$ we have $\bfp\leftrightarrow\bfp''$.
\end{cor}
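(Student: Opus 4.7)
The plan is to derive this from Lemma \ref{lem: relation with triple cosets} together with the openness of the double coset $\p_\theta w_0 \p_\theta$ in $\g$. Concretely, fix representatives $g, g', g''$ of $\bfp, \bfp', \bfp''$ respectively. The hypothesis $\bfp \leftrightarrow \bfp'$ translates via Lemma \ref{lem: relation with triple cosets} to $g \in \p_\theta w_0 \p_\theta g' \h$. Since $\leftrightarrow$ is symmetric in its two arguments, this is equivalent to $g' \in U$, where $U := \p_\theta w_0 \p_\theta g \h$; equivalently, one checks directly that $\p_\theta w_0^{-1} \p_\theta = \p_\theta w_0 \p_\theta$ using that $w_0$ is an involution of $\w$ (so any lift satisfies $w_0^2 \in \m \subset \p_\theta$).

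The set $\p_\theta w_0 \p_\theta$ is the preimage under $\g \to \f_\theta$ of the unique open $\p_\theta$-orbit in $\f_\theta$ (the one consisting of flags transverse to $\p_\theta$), hence it is open in $\g$. Therefore $U$ is open. Now the relation $\bfp'' \geq \bfp'$ means precisely that $g' \in \overline{\p_\theta g'' \h}$, so $U$ is an open neighborhood of a point of $\overline{\p_\theta g'' \h}$, and consequently $U \cap \p_\theta g'' \h \neq \emptyset$. Choose any $g''' \in U \cap \p_\theta g'' \h$: this element is another representative of $\bfp''$ which lies in $\p_\theta w_0 \p_\theta g \h$, so applying Lemma \ref{lem: relation with triple cosets} once more to the pair $(g, g''')$ yields $\bfp \leftrightarrow \bfp''$.

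The only points requiring care are the symmetry of the characterization in Lemma \ref{lem: relation with triple cosets} and the openness of $\p_\theta w_0 \p_\theta$; both are essentially standard. I therefore do not anticipate any substantive obstacle in this argument.
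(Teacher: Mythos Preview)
Your proof is correct and essentially identical to the paper's: both translate $\bfp\leftrightarrow\bfp'$ via Lemma~\ref{lem: relation with triple cosets} (using the symmetry of $\leftrightarrow$) into $g'\in\p_\theta w_0\p_\theta g\h$, use openness of $\p_\theta w_0\p_\theta$ to deduce that this open set meets $\p_\theta g''\h$ from the fact that it meets its closure, and conclude by Lemma~\ref{lem: relation with triple cosets} again. The only cosmetic difference is that the paper cites Lemma~\ref{lem: maximal positions and openess} for the openness of $\p_\theta w_0\p_\theta$, while you argue it directly as the preimage of the open $\p_\theta$-orbit in $\f_\theta$.
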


\begin{proof}
Pick representatives $g,g'$ and $g''$ in $\g$ for $\bfp,\bfp'$ and $\bfp''$ respectively.
As $\bfp\leftrightarrow\bfp'$, by Lemma \ref{lem: relation with triple cosets} we have $\p_\theta g'\h\subset \p_\theta w_0 \p_\theta g\h$.
As $\bfp'\leq \bfp''$, we also have $\p_\theta g'\h\subset\overline{\p_\theta g''\h}$.
In particular, $\p_\theta w_0 \p_\theta g\h$ intersects $\overline{\p_\theta g''\h}$.

On the other hand, by Lemma \ref{lem: maximal positions and openess} (applied to $\h=\p_\theta$), the set $\p_\theta w_0\p_\theta$ is open in $\g$.
Hence, so is $\p_\theta w_0 \p_\theta g\h$.
We then conclude that $(\p_\theta w_0 \p_\theta g\h)\cap (\p_\theta g''\h)\neq\emptyset$.
It follows that $g''\in \p_\theta w_0 \p_\theta g\h$, finishing the proof.
\end{proof}

When $\tau(\h)=\h$, the next two corollaries are useful to understand pairs of transversely related positions in concrete examples (c.f. Examples \ref{ex: fat ideals complementary subspaces and projective} and \ref{ex: fat ideals quadratic forms and projective} below).

\begin{cor}\label{cor: w0 action and transversely related}
Assume that the Cartan involution is chosen in such a way that $\tau(\h)=\h$.
Then for every $\bfp\in\p_\theta\backslash\g/\h$ we have $\bfp\leftrightarrow w_0\cdot\bfp$.
\end{cor}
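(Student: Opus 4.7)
The plan is to prove $\bfp \leftrightarrow w_0 \cdot \bfp$ by a direct application of Lemma \ref{lem: relation with triple cosets}, after choosing convenient representatives. By the Iwasawa decomposition $\g = \ko\p_\theta$, I can write $\bfp = \p_\theta k \h$ for some $k \in \ko$. Fixing a lift $\widetilde{w_0} \in \n_\ko(\liea)$ of the longest Weyl element, the construction in the proof of Proposition \ref{prop: action of w0} gives $w_0 \cdot \bfp = \p_\theta \widetilde{w_0} k \h$ (the hypothesis $\tau(\h)=\h$ enters only implicitly here, to ensure this definition is independent of the chosen representative $k$).

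Applying Lemma \ref{lem: relation with triple cosets} with $g = k$ and $g' = \widetilde{w_0} k$, the relation $\bfp \leftrightarrow w_0 \cdot \bfp$ is equivalent to the inclusion $k \in \p_\theta \widetilde{w_0} \p_\theta \widetilde{w_0} k \h$. Since the factors $k$ on the right and $\h$ on the far right can be absorbed by choosing trivial $\h$-part, it suffices to verify the purely group-theoretic statement $1 \in \p_\theta \widetilde{w_0} \p_\theta \widetilde{w_0}$.

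For this, I would use that $\widetilde{w_0}^2$ represents the identity element of the Weyl group $\w = \n_\ko(\liea)/\m$, so $\widetilde{w_0}^2 \in \m$. Since $\m \subset \p_\theta$ holds for every standard parabolic, one also has $\widetilde{w_0}^{-2} \in \p_\theta$, and the factorization
\[1 = 1 \cdot \widetilde{w_0} \cdot \widetilde{w_0}^{-2} \cdot \widetilde{w_0}\]
exhibits $1$ as an element of $\p_\theta \widetilde{w_0} \p_\theta \widetilde{w_0}$. Plugging back in, $k = \widetilde{w_0} \cdot \widetilde{w_0}^{-2} \cdot \widetilde{w_0} \cdot k \cdot 1 \in \p_\theta \widetilde{w_0} \p_\theta (\widetilde{w_0} k) \h$, which completes the argument.

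The only mild subtlety, which I expect to be the main piece of bookkeeping, is that the chosen lift $\widetilde{w_0}$ need not be an involution in $\g$ (its order can exceed $2$), so one must absorb the discrepancy $\widetilde{w_0}^{\pm 2}$ via its membership in $\m \subset \p_\theta$. A more geometric alternative with the same content is to set $x = k \cdot o$, $\xi = \p_\theta$, $\xi' = \widetilde{w_0}^{-1}\p_\theta$ and verify directly that $\pos(\xi,x) = \bfp$, $\pos(\xi',x) = w_0 \cdot \bfp$, while transversality of $\xi$ and $\xi'$ follows from $\widetilde{w_0}^{-1} \in \p_\theta \widetilde{w_0} \p_\theta$ (the open Bruhat cell, since $w_0^{-1} = w_0$ in $\w$).
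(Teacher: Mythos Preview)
Your proof is correct and follows essentially the same approach as the paper: represent $\bfp = \p_\theta k \h$ with $k \in \ko$ and apply Lemma~\ref{lem: relation with triple cosets} directly. The only difference is that the paper swaps the roles of $g$ and $g'$ (taking $g = w_0 k$ and $g' = k$), so the required membership becomes the trivially true $w_0 k \in \p_\theta w_0 \p_\theta k \h$, avoiding your detour through $\widetilde{w_0}^{\,2} \in \m \subset \p_\theta$.
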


\begin{proof}
Represent $\bfp=\p_\theta k\h$ for some $k\in\ko$.
As $w_0k \in \p_\theta w_0 \p_\theta k\h$, the result follows from Proposition \ref{prop: action of w0} and Lemma \ref{lem: relation with triple cosets}.
\end{proof}

If moreover there is a unique minimal position, the relation $\leftrightarrow$ is trivial.

\begin{cor}\label{cor: transversely related when unique minimal position}
Suppose that the Cartan involution $\tau$ satisfies $\tau(\h)=\h$.
Assume furthermore that there exists a unique minimal relative position in $\p_\theta\backslash\g/\h$.
Then $\bfp\leftrightarrow\bfp'$ for every pair of relative positions $\bfp$ and $\bfp'$.
\end{cor}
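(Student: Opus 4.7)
The plan is to reduce the question to showing that the unique minimal position is transversely related to itself, and then to propagate this relation to all pairs by using Corollary \ref{cor: increasing position keeps the relation} twice, taking advantage of the symmetry of $\leftrightarrow$.

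First I would let $\bfp_{\min}$ denote the unique minimal relative position in $\p_\theta\backslash\g/\h$. Since $w_0$ acts by an order-preserving involution on $\p_\theta\backslash\g/\h$ (Proposition \ref{prop: action of w0}), it permutes the set of minimal positions; by uniqueness, $w_0\cdot\bfp_{\min}=\bfp_{\min}$. Applying Corollary \ref{cor: w0 action and transversely related} to $\bfp_{\min}$ therefore yields
\[
\bfp_{\min}\leftrightarrow w_0\cdot\bfp_{\min}=\bfp_{\min}.
\]

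Next, for any relative position $\bfp'$ one has $\bfp_{\min}\leq\bfp'$ by minimality. Corollary \ref{cor: increasing position keeps the relation} (applied with $\bfp=\bfp_{\min}$, $\bfp'=\bfp_{\min}$, $\bfp''=\bfp'$) then gives $\bfp_{\min}\leftrightarrow\bfp'$. By symmetry of $\leftrightarrow$ (which holds because $\iota(\theta)=\theta$, so transversality of flags in $\f_\theta$ is a symmetric relation), we also get $\bfp'\leftrightarrow\bfp_{\min}$. Applying Corollary \ref{cor: increasing position keeps the relation} once more with the chain $\bfp_{\min}\leq\bfp$ yields $\bfp'\leftrightarrow\bfp$, which is the desired conclusion.

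There is no real obstacle here: the whole argument is a direct combination of Corollaries \ref{cor: increasing position keeps the relation} and \ref{cor: w0 action and transversely related}. The only slightly subtle point is that one must observe that $w_0$ fixes the minimal position, which follows immediately from the fact that the $w_0$-action is order preserving (a consequence of $\tau(\h)=\h$, via Proposition \ref{prop: action of w0}) together with the uniqueness assumption.
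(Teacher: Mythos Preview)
Your proof is correct and follows essentially the same approach as the paper's: show that $w_0$ fixes the unique minimal position (via Proposition \ref{prop: action of w0}), deduce $\bfp_{\min}\leftrightarrow\bfp_{\min}$ from Corollary \ref{cor: w0 action and transversely related}, and then propagate using Corollary \ref{cor: increasing position keeps the relation}. The paper's proof is simply more terse, writing ``Corollary \ref{cor: increasing position keeps the relation} finishes the proof'' where you explicitly spell out the two applications and the intervening use of the symmetry of $\leftrightarrow$.
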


\begin{proof}
Let $\bfp_{\min}$ be the unique minimal position.
By Proposition \ref{prop: action of w0} we have $w_0\cdot\bfp_{\min}=\bfp_{\min}$.
Hence, Corollary \ref{cor: w0 action and transversely related} implies $\bfp_{\min}\leftrightarrow\bfp_{\min}$.
Corollary \ref{cor: increasing position keeps the relation} finishes the proof.
\end{proof}

\begin{rem}\label{rem: transversely related for flags}
When $\h=\p_{\theta'}$ is a parabolic subgroup of $\g$, we do not have $\tau(\h)=\h$.
Nevertheless, as discussed in Subsection \ref{subsec: relative positions and order} the $w_0$-action still makes sense in that setting and actually one has \begin{equation}\label{eq: equivalence transversely related in flag case}
\bfp\leftrightarrow\bfp' \Leftrightarrow \bfp'\geq w_0\cdot \bfp
\end{equation}
\noindent (see \cite[Example 5.1.4]{SteThesis}).
In particular, we still have $\bfp\leftrightarrow w_0\cdot\bfp$ for every relative position $\bfp$.
However, since the unique minimal position is not fixed by $w_0$, Corollary \ref{cor: transversely related when unique minimal position} no longer holds when $\h=\p_{\theta'}$.

On the other hand, in the more general setting we are interested in the equivalence (\ref{eq: equivalence transversely related in flag case}) is not true anymore.
For instance, in the example of Figure \ref{fig: intro} (right) there are two relative positions $\mathbf{p} \neq \mathbf{p}' $ which are not minimal nor maximal.
These two positions are fixed by $w_0$ but thanks to Corollary \ref{cor: transversely related when unique minimal position} we have $\bfp\leftrightarrow\bfp'$.
\end{rem}

\subsection{Minimal transversely related positions for minimal parabolic orbits}\label{subsec: minimal transversely related}

We now carefully analyse pairs of minimal transversely related positions for minimal parabolic orbits on some symmetric spaces (Corollary \ref{cor: related minimal positions} below).
More precisely, in this subsection we assume that $\h$ is symmetric (such that $\tau(\h)=\h$) and take $\theta=\Delta$, so that $\p_\theta = \bor$.
We consider a $\sigma$-invariant Cartan subspace $\liea$ so that $\liea_\h:=\liea\cap\lieh$ is a Cartan subspace of $\h$.
We also assume that $\liea_\h$ contains a regular element and we pick $\liea^+$ so that $\liea_\h\cap\tn{int}(\liea^+)$ is non-empty.
Recall that by Remark \ref{rem: sigma compatible and regular} this implies that $\liea^+$ is $\sigma$-compatible.
Recall also that $\w^\sigma$ denotes the stabilizer of $\liea_\h$ in $\w$.
The following theorem is useful.

\begin{thm}[Matsuki {\cite[§3, Proposition 2]{MatsukiMinimalParabolics}}, see also {\cite[Proposition 7.1.8]{Sch}}]\label{thm: matsuki minimal}
Assume that $\Sigma^+$ is $\sigma$-compatible.
Then a relative position $\bfp \in\bor\backslash\g/\h$ is minimal if and only if there is some $w\in\w^\sigma$ so that $\bfp=\bor w\h$.
In particular, the set of minimal relative positions is parametrized by $\w^\sigma/(\w\cap\h)$.
\end{thm}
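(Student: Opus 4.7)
The plan is to identify minimal positions in $\bor\backslash\g/\h$ with closed $\h$-orbits on the flag manifold $\f=\g/\bor$, and then to produce, for each such closed orbit, a representative of the form $\tilde w\bor$ with $\tilde w\in\n_\ko(\liea)$ whose class $w\in\w$ lies in $\w^\sigma$. By Lemma \ref{lem: maximal positions and openess}, $\bor g\h$ is minimal iff the corresponding $\h$-orbit on $\f$ is closed. Since $\f$ is compact and $\h$ is reductive, such a closed $\h$-orbit is compact, hence its stabilizer is a parabolic subgroup of $\h$; equivalently, for some representative, $\h\cap g\bor g^{-1}$ must contain a Borel subgroup of $\h$.

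For the ``if'' direction, take $w\in\w^\sigma$ with representative $\tilde w\in\n_\ko(\liea)$. Since $\sigma\cdot w=w$ in $\w$, the element $w$ preserves $\liea_\h\subset\liea$ as a set. A check analogous to Remark \ref{rem: sigma compatible and regular} shows that $w\cdot\Sigma^+$ remains $\sigma$-compatible, so its restriction to $\liea_\h$ gives a positive system $\Sigma_\h^+(w)$ of $\Sigma_\h$. For each $\beta\in\Sigma_\h^+(w)$, a root vector of $\lieh_\beta$ can, by Lemma \ref{lema roots of liea cap lieh and liea}, be written $X+\sigma(X)$ with $X\in\lieg_\alpha$ for some $\alpha\in w\cdot\Sigma^+$ restricting to $\beta$; by $\sigma$-compatibility $\sigma(\alpha)\in w\cdot\Sigma^+$ as well, so $X+\sigma(X)\in\tilde w\lieb\tilde w^{-1}\cap\lieh$, where $\lieb$ is the Lie algebra of $\bor$. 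Together with $\liea_\h$, these root vectors span a Borel subalgebra of $\lieh$ contained in $\tilde w\lieb\tilde w^{-1}\cap\lieh$, so $\bor\tilde w\h$ is closed.

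For the ``only if'' direction, suppose $\bor g\h$ is closed. Then $\h\cap g\bor g^{-1}$ contains some Borel $\bor_\h'$ of $\h$, and after replacing $g$ by $hg$ for a suitable $h\in\h$ we may arrange that $\bor_\h'$ is the standard Borel of $\h$ associated with $\liea_\h$ and a fixed positive system of $\Sigma_\h$. In particular $\liea_\h\subset g\bor g^{-1}$. The plan is then to further modify $g$ by a right multiplication from $\bor$ so that $g\in\n_\ko(\liea)$: using conjugacy of maximal abelian split subspaces of the solvable algebra $g\lieb g^{-1}\cap\liep$ that contain $\liea_\h$, combined with the interplay of the commuting involutions $\sigma$ and $\tau$, one arranges $g\liea g^{-1}=\liea$; the Iwasawa decomposition then lets one refine $g$ to an element of $\n_\ko(\liea)$. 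Finally, applying $\sigma$ to the resulting chain $\liea_\h\subset g\bor g^{-1}$ and comparing with the original forces the class $w:=[g]\in\w$ to commute with $\sigma$, i.e. $w\in\w^\sigma$.

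For the parametrization, different representatives $\tilde w\in\n_\ko(\liea)$ of the same $w\in\w$ yield the same coset $\bor\tilde w\h$ since they differ by $\m\subset\bor$, and two elements $w,w'\in\w^\sigma$ produce the same coset exactly when $w^{-1}w'\in\w\cap\h$, using the normality of $\w\cap\h$ in $\w^\sigma$ recorded in Subsection \ref{subsec: weyl}. The main obstacle I anticipate is the converse direction above: the careful conjugation argument that turns the geometric condition ``the stabilizer contains a Borel of $\h$'' into the concrete statement ``a representative lies in $\n_\ko(\liea)$ and has $\sigma$-fixed Weyl class''. All the other steps rely on well-understood facts about Borel subalgebras and the already-established Lemma \ref{lema roots of liea cap lieh and liea}.
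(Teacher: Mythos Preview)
The paper does not give its own proof of this statement: Theorem~\ref{thm: matsuki minimal} is quoted as an external result, with references to Matsuki \cite[§3, Proposition 2]{MatsukiMinimalParabolics} and Schlichtkrull \cite[Proposition 7.1.8]{Sch}, and no argument is supplied in the text. So there is nothing in the paper to compare your proposal against.

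That said, a few remarks on your sketch. Your ``if'' direction is essentially sound: for $w\in\w^\sigma$ the $\sigma$-compatibility of $w\cdot\Sigma^+$ together with Lemma~\ref{lema roots of liea cap lieh and liea} does place a Borel subalgebra of $\lieh$ inside $\Ad_{\tilde w}(\lieb)$, whence the $\h$-orbit through $\tilde w\bor$ is compact and therefore closed. For the ``only if'' direction you correctly identify the difficulty; note that Matsuki's original argument proceeds somewhat differently, via a careful analysis of $\sigma$-stable maximal split tori in the various Borel conjugates, and this is genuinely the delicate step. Finally, your parametrization argument is incomplete: normality of $\w\cap\h$ in $\w^\sigma$ only tells you left and right cosets agree, but it does not by itself show that $\bor w\h=\bor w'\h$ forces $w^{-1}w'\in\w\cap\h$. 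One direction is clear (if $w'=wh$ with $h\in\n_\ko(\liea)\cap\h$ then the double cosets agree), but the converse requires an argument, typically by comparing the $\liea_\h$-parts of the stabilizers or by invoking the uniqueness part of Matsuki's classification.
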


Even though the equivalence $\bfp\leftrightarrow\bfp' \Leftrightarrow \bfp'\geq w_0\cdot \bfp$ \noindent does not hold in general (Remark \ref{rem: transversely related for flags}), we can prove the following in our current setting.

\begin{prop}\label{prop: RAMR when one is minimal}
Assume that $\h$ is symmetric and that $\liea_\h$ intersects $\tn{int}(\liea^+)$.
Let $\bfp$ be any element in $\bor\backslash\g/\h$ and suppose that there is some minimal position $\bfp_{\min}$ so that $\bfp\leftrightarrow \bfp_{\min}$.
Then $\bfp\geq  w_0\cdot \bfp_{\min}$.
\end{prop}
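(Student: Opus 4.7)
The plan is to find a sequence in the $\bor$-orbit of $n^-\cdot y_0 \in \x$ converging to $y_0 := (w_0 w)\cdot o$, where $w \in \w^\sigma$ represents $\bfp_{\min}$ and $n^-$ is a suitable element of the opposite unipotent subgroup $\n^- := \exp \lien^-$, $\lien^- := \bigoplus_{\alpha \in \Sigma^+} \lieg_{-\alpha}$. This will give the desired inequality $w_0 \cdot \bfp_{\min} \leq \bfp$.

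First, I would unpack the hypothesis. Matsuki's Theorem \ref{thm: matsuki minimal} allows me to write $\bfp_{\min} = \bor w \h$ with $w \in \w^\sigma$, and Proposition \ref{prop: action of w0} identifies $w_0 \cdot \bfp_{\min}$ with $\bor w_0 w \h$. By Lemma \ref{lem: relation with triple cosets}, the assumption $\bfp \leftrightarrow \bfp_{\min}$ yields a representative $g \in \bor w_0 \bor w \h$ of $\bfp$. The big-cell decomposition $\bor w_0 \bor = \bor \cdot \n^- \cdot w_0$ then produces $g = b\,n^-\,w_0\,w\,h$ with $b \in \bor$, $n^- \in \n^-$, $h \in \h$. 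In $\x$, $\bfp$ and $w_0 \cdot \bfp_{\min}$ are realized by $n^- \cdot y_0$ and $y_0$ respectively, so the claim reduces to proving $y_0 \in \overline{\bor \cdot n^- y_0}$.

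Next, I would install a one-parameter subgroup that simultaneously lies in $\bor$ and fixes $y_0$. Pick any $A \in \liea_\h \cap \tn{int}(\liea^+)$ and set $a_s := \exp(sA)$; then $a_s \in \bor \cap \h$. To guarantee $a_s \cdot y_0 = y_0$, I need $\tn{Ad}((w_0 w)^{-1})A \in \liea_\h$, which follows from $w_0 w \in \w^\sigma$ since $\w^\sigma$ preserves $\liea_\h$. This in turn rests on the key point $w_0 \in \w^\sigma$: the hypothesis that $\liea_\h$ contains a regular element forces every restricted root to restrict non-trivially to $\liea_\h$, upgrading the $\sigma$-compatibility of Remark \ref{rem: sigma compatible and regular} to $\sigma(\Sigma^+) = \Sigma^+$. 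Hence $\sigma$ preserves $\liea^+$ setwise and must fix the longest Weyl element.

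The main computation is then the identity
\[ (a_s n^- a_s^{-1}) \cdot y_0 \;=\; a_s n^- \cdot (a_s^{-1} y_0) \;=\; a_s \cdot (n^- y_0) \;\in\; \bor \cdot (n^- y_0), \]
valid for every $s$, using $a_s^{-1} \in \tn{Stab}_\g(y_0)$ in the middle equality and $a_s \in \bor$ at the end. Because $A$ lies in the open positive chamber, $\tn{Ad}(a_s)$ contracts $\lien^-$ as $s \to +\infty$, so $a_s n^- a_s^{-1} \to e$ and the displayed points converge to $y_0$. This exhibits $y_0$ as a limit of elements of $\bor \cdot n^- y_0$, which translates to $w_0 \cdot \bfp_{\min} \leq \bfp$ via the standard correspondence between relative positions and $\bor$-orbits in $\x$. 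The only delicate ingredient is the verification that $w_0 \in \w^\sigma$ from the regularity hypothesis; once that is in place, the remaining argument is a contraction. Conceptually, the reason we obtain $\bfp \geq w_0 \cdot \bfp_{\min}$ (as opposed to the reversed inequality in Remark \ref{rem: transversely related for flags}) is that the dynamical element $a_s$ simultaneously belongs to $\bor$ and to the stabilizer of the distinguished point $y_0 \in \x$, a combination incompatible with $\x$ being a flag manifold.
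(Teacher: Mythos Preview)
Your proof is correct and follows essentially the same contraction argument as the paper: both push an element of $\exp(\liea_\h)$ (lying in the relevant Borel) through the unipotent factor, using that the relevant Weyl element preserves $\liea_\h$ so the conjugate is absorbed by $\h$. The paper's version is marginally more economical: it conjugates by $w_0$ at the outset to reduce to showing $w\in\overline{\bor^- b w\h}$ with $b\in\bor$, so it can invoke $w\in\w^\sigma$ directly from Matsuki's theorem and never needs your extra step verifying $w_0\in\w^\sigma$ (which you handle correctly via $\sigma(\Sigma^+)=\Sigma^+$).
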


\begin{proof}
Write $\bfp=\bor g\h$ and $\bfp_{\min}=\bor w\h$ for some $g\in\g$ and $w\in\w^\sigma$.
We want to show that $w_0w\in\overline{\bor g\h}$ or, equivalently, $w\in\overline{\bor^-w_0 g\h}$.

Now, since $\bfp\leftrightarrow \bfp_{\min}$ we find $x\in\x$ such that $$\pos(\bor,x)=\bor w\h \tn{ and }\pos(\bor^-,x)=\bor g \h.$$
\noindent There exist then $b,b'\in\bor$ such that $bw\cdot o=x=w_0b'g\cdot o$.
That is, we find $b^-\in\bor^-$ such that $$bw\cdot o=b^-w_0g\cdot o.$$
\noindent Hence, $\bor^-w_0g\h=\bor^-bw\h$ and Lemma \ref{lem: for RAMR when one is minimal} below finishes the proof.
\end{proof}

\begin{lem}\label{lem: for RAMR when one is minimal}
For every $w\in\w^\sigma$ and $b\in\bor$ as above one has $w\in\overline{\bor^-bw\h}$.
\end{lem}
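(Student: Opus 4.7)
The plan is to construct explicit approximating sequences by hand. Since $\exp(\liea_\h)\subset\h$, we can take $h_n\in\exp(\liea_\h)$ and, crucially, exploit the assumption $\liea_\h\cap\mathrm{int}(\liea^+)\ne\emptyset$: elements going to infinity in that intersection both conjugate elements of $\n$ (the unipotent radical of $\bor$) to the identity and, after passing through $w$, remain in $\liea_\h$.

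More concretely, first pick $H^\ast\in\liea_\h\cap\mathrm{int}(\liea^+)$, which exists by the standing assumption of the subsection. Because $w\in\w^\sigma$ preserves $\liea_\h$, the element $w^{-1}\cdot H^\ast$ still lies in $\liea_\h$, so setting $H_n:=n\cdot w^{-1}\cdot H^\ast$ and $h_n:=\exp(H_n)$ yields $h_n\in\exp(\liea_\h)\subset\h$. The basic identity
\[w\, h_n = \exp(n H^\ast)\, w\]
transports the divergence of $h_n$ from the $\h$-side back to the Cartan subspace on the $\bor$-side. Next, use the Langlands decomposition $\bor=\m\exp(\liea)\n$ to write $b^{-1}=m\exp(A)u$ with $m\in\m$, $A\in\liea$, $u\in\n$, and define
\[b^-_n := m\exp(A - nH^\ast)\in \m\exp(\liea)\subset\bor^-.\]
By construction $b^-_n\, b\, w\, h_n \in \bor^- b w \h$.

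The verification boils down to a direct computation. Substituting $b=u^{-1}\exp(-A)m^{-1}$ and using that $m^{-1}$ centralizes $\exp(\liea)$, one finds
\[b^-_n\, b\, w\, h_n = m\,\bigl(\exp(-nH^\ast+A)\, u^{-1}\,\exp(nH^\ast-A)\bigr)\, m^{-1}\, w.\]
The inner bracket equals $\exp(-\mathrm{Ad}_{\exp(-nH^\ast+A)}X)$ where $u=\exp(X)$, $X=\sum_{\alpha\in\Sigma^+}X_\alpha$ with $X_\alpha\in\lieg_\alpha$; since $\alpha(H^\ast)>0$ for every $\alpha\in\Sigma^+$, each summand $e^{-n\alpha(H^\ast)+\alpha(A)}X_\alpha$ decays to zero. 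Therefore the bracket tends to the identity and $b^-_n\, b\, w\, h_n\to m\cdot e\cdot m^{-1}\cdot w = w$, which is exactly what is needed.

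I expect the main bookkeeping obstacle to be keeping track of the various decompositions and ensuring that $b^-_n$ genuinely lies in $\bor^-$ (as opposed to some boundary stratum); this is why the Langlands decomposition $\bor=\m\exp(\liea)\n$ is preferable to a bare Iwasawa form, since it gives $\m\exp(\liea)\subset\bor\cap\bor^-$ directly. The conceptual input beyond bookkeeping is really a single one, namely the non-emptiness of $\liea_\h\cap\mathrm{int}(\liea^+)$, which simultaneously guarantees the regularity needed for the $\n$-contraction and keeps the $h_n$ inside $\h$ via $w\in\w^\sigma$.
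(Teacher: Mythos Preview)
Your proof is correct and follows essentially the same approach as the paper's: both use a regular direction $H^\ast\in\liea_\h\cap\mathrm{int}(\liea^+)$ to contract the unipotent part of $b$ to the identity, together with the fact that $w\in\w^\sigma$ preserves $\liea_\h$ so that the contracting element can be absorbed into $\h$ on the right. The only cosmetic difference is that the paper first absorbs the $\m\exp(\liea)$-part of $b$ into $\bor^-$ (reducing to $b\in\n$) and argues at the level of the double coset, whereas you carry these factors through explicitly via your choice of $b_n^-$.
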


\begin{proof}
We may assume $b=n\in\n$.
Since $\liea_\h$ contains a regular element, we may take a sequence $h_p\in\exp(\liea_\h)$ such that $\alpha(\mu(h_p))\to\infty$ for every $\alpha\in\Sigma^+$.
Then $h_pnh_p^{-1}\to 1$.
Furthermore, since $w$ preserves $\liea_\h$ we have $h_pw=wh_p'$ for some $h_p'\in\h$.
Hence, as $h_p\in\exp(\liea_\h)\subset\bor^-$, $$\bor^-bw\h=\bor^-h_pnh_p^{-1}h_pw\h=\bor^-h_pnh_p^{-1}wh_p'\h=\bor^-h_pnh_p^{-1}w\h$$
\noindent and by letting $p\to\infty$ the lemma follows.
\end{proof}

\begin{cor}\label{cor: related minimal positions}
Assume that $\h$ is symmetric and that $\liea_\h$ intersects $\tn{int}(\liea^+)$.
Two minimal positions $\bfp_{\min}$ and $\bfp_{\min}'$ in $\bor\backslash\g/\h$ satisfy $\bfp_{\min}\leftrightarrow \bfp_{\min}'$ if and only if $\bfp_{\min}'=w_0\cdot \bfp_{\min}$.
\end{cor}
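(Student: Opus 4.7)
The plan is to combine the two key ingredients already proved: Corollary \ref{cor: w0 action and transversely related}, which gives $\bfp \leftrightarrow w_0\cdot \bfp$ for every relative position $\bfp$, and Proposition \ref{prop: RAMR when one is minimal}, which forces any $\bfp$ transversely related to a minimal position $\bfp_{\min}$ to satisfy $\bfp \geq w_0 \cdot \bfp_{\min}$.

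For the ``if'' direction, suppose $\bfp_{\min}' = w_0 \cdot \bfp_{\min}$. Then Corollary \ref{cor: w0 action and transversely related} applied to $\bfp_{\min}$ gives $\bfp_{\min} \leftrightarrow w_0 \cdot \bfp_{\min} = \bfp_{\min}'$, which is exactly what we want.

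For the ``only if'' direction, suppose $\bfp_{\min}$ and $\bfp_{\min}'$ are both minimal and $\bfp_{\min} \leftrightarrow \bfp_{\min}'$. Proposition \ref{prop: RAMR when one is minimal}, applied with $\bfp = \bfp_{\min}'$, yields $\bfp_{\min}' \geq w_0 \cdot \bfp_{\min}$. By Proposition \ref{prop: action of w0}, $w_0$ acts as an order-preserving involution on $\bor\backslash\g/\h$, so it permutes minimal elements; in particular $w_0 \cdot \bfp_{\min}$ is itself minimal. Since $\bfp_{\min}'$ is minimal and dominates the minimal element $w_0 \cdot \bfp_{\min}$, equality must hold: $\bfp_{\min}' = w_0 \cdot \bfp_{\min}$.

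The argument is short and essentially a bookkeeping step; the only nontrivial point is the observation that $w_0$ preserves minimality of positions, which follows immediately from its being an order-preserving involution. No further obstacle is expected, as the substantive work (the existence of the $w_0$-action under $\tau(\h)=\h$, and the comparison estimate for transversely related positions with one minimal element) has already been established in Proposition \ref{prop: action of w0} and Proposition \ref{prop: RAMR when one is minimal}.
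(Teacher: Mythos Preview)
Your proof is correct and follows essentially the same approach as the paper: the ``if'' direction via Corollary \ref{cor: w0 action and transversely related}, and the ``only if'' direction via Proposition \ref{prop: RAMR when one is minimal} together with minimality. A minor simplification: in the converse you do not actually need that $w_0\cdot\bfp_{\min}$ is minimal; the inequality $w_0\cdot\bfp_{\min}\leq\bfp_{\min}'$ and the minimality of $\bfp_{\min}'$ alone force equality.
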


\begin{proof}
By Proposition \ref{prop: action of w0} we have $\bfp_{\min}\leftrightarrow w_0\cdot \bfp_{\min}$.
Conversely, suppose that two minimal positions $\bfp_{\min}$ and $\bfp_{\min}'$ are transversely related.
By Proposition \ref{prop: RAMR when one is minimal} we have $\bfp_{\min}'\geq w_0\cdot \bfp_{\min}$.
Since $\bfp_{\min}' $ is minimal the result follows.
\end{proof}

\begin{ex}\label{ex: matsuki and w0 action complementary subspaces}
  Let $\x = \{(U^+,U^-)\in\grass_p(V)\times\grass_{q}(V) \mid U^+\oplus U^-=V  \}$ be as in Example \ref{ex: rel position pairs of complementary subspaces}, with basepoint $o=(U_o^+,U_o^-)$. 
  Recall that $\liea\subset\lieh$, in particular $\w^\sigma = \w$.
  Note however that a Weyl group element $w$ belongs to $\w \cap \h$ if and only if it preserves $U_o^\pm$.
  If we identify the Weyl group $\w$ with permutations of $\{1, \dots, d\}$, then $w \in \w \cap \h$ if and only if $w$ preserves $\{1,\dots,p\}$.
  Theorem \ref{thm: matsuki minimal} allows us to identify the set of minimal positions in $\bor \backslash \g / \h$ with the quotient $\w^\sigma \!/ (\w \cap \h)$.
  It can be represented by subsets $A \subset \{1, \dots, d\}$ with $\#A = p$ (the images of $\{1, \dots, p\}$ under $w \in \w^\sigma$).
  Hence $\#(\w^\sigma / (\w \cap \h)) = \binom{d}{p}$.

  The order preserving involution on $\bor \backslash \g / \h$ restricts to the set of minimal positions and corresponds to the action of $w_0$ on $\w^\sigma \!/ (\w \cap \h)$ by left--multiplication.
  It has no fixed points if $d$ is even and $p$ is odd, and $\binom{\lfloor d/2 \rfloor}{\lfloor p/2 \rfloor}$ fixed points otherwise.
\end{ex}

\section{\texorpdfstring{Fat and $w_0$-fat ideals}{Fat and w0-fat ideals}}\label{sec: fat ideals}

We now generalize the notion of fat ideal introduced by Kapovich-Leeb-Porti \cite{KLPdomains} to our current setting.
We do this in two ways.
On the one hand, for a general closed subgroup $\h$ we use the relation $\leftrightarrow$.
On the other, when $\h$ is $\tau$-invariant we use the involution $w_0$ of Proposition \ref{prop: action of w0} to define a more general notion, which will be important to describe maximal domains of discontinuity in Section \ref{sec: maximality}.

\subsection{Definition and first properties}

Throughout this section we fix a self-opposite parabolic subgroup $\p_\theta$, for some non-empty subset $\theta\subset\Delta$ and a closed subgroup $\h$ of $\g$ for which the double coset space $\p_\theta\backslash\g/\h$ is finite.
Recall that an \textit{ideal} is a subset $\bfi\subset\p_\theta\backslash\g/\h$ so that for every $\bfp\in \bfi$ and every $\bfp'\leq \bfp$, one has $\bfp'\in \bfi$.

\begin{dfn}\label{dfn: fat and w0fat ideal}
An ideal $\bfi$ is \textit{fat} if for every $\bfp\notin \bfi$ there exists $\bfp'\in \bfi$ so that $\bfp\leftrightarrow\bfp'$.
In the case $\tau(\h)=\h$, we will say that $\bfi$ is $w_0$-\textit{fat} if for every minimal position $\bfp_{\min}\notin \bfi$ we have $w_o\cdot\bfp_{\min}\in\bfi$.
\end{dfn}

Let us emphasize that in the case $\tau(\h)=\h$ we do not require the existence of a minimal position not belonging to $\bfi$.
More precisely, an ideal containing all minimal positions is $w_0$-fat. 
Observe also that if an ideal $\bfi'$ contains a fat (resp. $w_0$-fat) ideal $\bfi$, then $\bfi'$ is itself fat (resp. $w_0$-fat).
This is why we will be typically interested in finding the minimal (non-empty) fat and $w_0$-fat ideals.

\begin{ex}\label{ex: fat ideals in flag case}
Definition \ref{dfn: fat and w0fat ideal} takes inspiration from Kapovich-Leeb-Porti \cite{KLPdomains}.
In that work the authors define a fat ideal of $\p_\theta\backslash\g/\p_{\theta'}$ to be an ideal $\bfi$ for which $w_0\cdot\bfp\in \bfi$ for every $\bfp\notin\bfi$.
Thanks to Equation (\ref{eq: equivalence transversely related in flag case}), this is equivalent to our definition of fat ideal.
The relation between fat and $w_0$-fat ideals in Definition \ref{dfn: fat and w0fat ideal} will be discussed in Lemma \ref{lem: fat is w0-fat} and Example \ref{ex: fat ideals lines transverse to hyperplanes and full flags}.
\end{ex}

Provided there is more than one relative position, fat and $w_0$-fat ideals exist.

\begin{prop}\label{prop: non maximal is fat}
Assume that $\vert\p_\theta\backslash\g/\h\vert >1$.
Then $$\Inonmax:=\{\bfp\in\p_\theta\backslash\g/\h: \bfp \tn{ is not maximal}\}$$ \noindent is a fat ideal (and $w_0$-fat when $\tau(\h)=\h$).
\end{prop}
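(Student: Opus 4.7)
My plan is to verify in turn that $\Inonmax$ is an ideal, that it is fat, and that it is $w_0$-fat whenever $\tau(\h)=\h$.

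The ideal property follows from antisymmetry of $\leq$: if $\bfp\in\Inonmax$ and $\bfp'\leq\bfp$, then assuming for contradiction that $\bfp'$ is maximal forces $\bfp=\bfp'$ (since $\bfp\geq\bfp'$ and nothing is strictly above $\bfp'$), making $\bfp$ maximal itself, against $\bfp\in\Inonmax$.

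For fatness, I would take any maximal $\bfp=\p_\theta g\h$ and any non-maximal $\bfp'=\p_\theta g'\h$, the latter existing because $|\p_\theta\backslash\g/\h|>1$. By Lemma~\ref{lem: relation with triple cosets} I need only show $g\in\p_\theta w_0\p_\theta g'\h$. The big Bruhat cell $\p_\theta w_0\p_\theta$ is open and dense in $\g$, so its right translate $\p_\theta w_0\p_\theta g'$ is open and dense, and hence
\[\p_\theta w_0\p_\theta g'\h \;=\; \bigcup_{h\in\h}(\p_\theta w_0\p_\theta g')h\]
is open and dense in $\g$. On the other hand, since $\bfp$ is maximal, Lemma~\ref{lem: maximal positions and openess} says $\p_\theta g\h$ is open in $\g$. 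A non-empty open set meets every dense set, so $\p_\theta g\h$ intersects $\p_\theta w_0\p_\theta g'\h$, giving $g\in\p_\theta w_0\p_\theta g'\h$ as needed.

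For $w_0$-fatness (assuming $\tau(\h)=\h$), I would prove the stronger statement that no minimal position lies outside $\Inonmax$, so that the defining condition is vacuous. Indeed, if some $\bfp=\p_\theta g\h$ were both minimal and maximal, then by Lemma~\ref{lem: maximal positions and openess} the set $\p_\theta g\h$ would be simultaneously open and closed in $\g$. Since $\g$ is connected and $\p_\theta g\h$ is non-empty, it would have to equal $\g$, contradicting $|\p_\theta\backslash\g/\h|>1$. The argument is mostly routine; the only substantive input is the openness and density of the big Bruhat cell $\p_\theta w_0\p_\theta$, together with connectedness of $\g$ for the last step.
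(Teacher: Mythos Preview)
Your proof is correct and takes essentially the same approach as the paper: both fatness arguments rest on the fact that the big Bruhat cell $\p_\theta w_0\p_\theta$ (equivalently, the set of flags transverse to a fixed one) is open and dense, so it must meet the open orbit corresponding to any maximal position. You phrase this algebraically via Lemma~\ref{lem: relation with triple cosets} and give a direct argument, while the paper argues by contradiction in $\f_\theta$; you also make the $w_0$-fat claim explicit via the clopen-orbit argument, which the paper leaves to the reader (and which, incidentally, is the same connectedness argument needed to justify your earlier assertion that a non-maximal position exists).
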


\begin{proof}
Since $\g$ is connected and $\p_\theta\backslash\g/\h$ contains at least two elements, non-maximal positions do exist (c.f. Lemma \ref{lem: maximal positions and openess}).
It is also clear that $\Inonmax$ is an ideal.

Suppose by contradiction that there exists a maximal relative position $\bfp$ which is not transversely related to any non-maximal position.
Fix a flag $\xi\in\f_\theta$ so that $\pos(\xi,o)$ is not maximal.
Then for every flag $\xi'$ transverse to $\xi$ we have $\pos(\xi',o)\neq \bfp$.
Since the set of flags in $\f_\theta$ which are transverse to $\xi$ is dense in $\f_\theta$, we obtain a contradiction by applying Lemma \ref{lem: maximal positions and openess}.
\end{proof}

The following is a consequence of Corollary \ref{cor: transversely related when unique minimal position}.

\begin{cor}\label{cor: minimal fat ideal when unique minimal position}
Suppose that the Cartan involution $\tau$ satisfies $\tau(\h)=\h$.
Assume furthermore that there exists a unique minimal relative position $\bfp_{\min}\in\p_\theta\backslash\g/\h$.
Then every non--empty ideal in $\p_\theta\backslash\g/\h$ is fat and $w_0$-fat.
\end{cor}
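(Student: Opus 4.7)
The plan is to reduce the statement directly to Corollary \ref{cor: transversely related when unique minimal position}, using two elementary structural observations about the unique minimal position.

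First I would note that any non-empty ideal $\bfi$ must contain $\bfp_{\min}$: starting from any $\bfp \in \bfi$ and descending along a chain in the finite poset $\p_\theta \backslash \g / \h$, one reaches a minimal element, which by uniqueness is $\bfp_{\min}$, and all elements along the chain lie in $\bfi$ by downward closure. Second, I would record that $w_0 \cdot \bfp_{\min} = \bfp_{\min}$: by Proposition \ref{prop: action of w0} the involution $w_0$ is order preserving, so it sends the set of minimal positions to itself, and by hypothesis this set is a singleton.

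With these observations in hand, the fat condition is immediate. Given any $\bfp \notin \bfi$, Corollary \ref{cor: transversely related when unique minimal position} yields a transverse relation $\bfp \leftrightarrow \bfp_{\min}$, and the first observation shows that $\bfp_{\min} \in \bfi$ serves as the required witness. The $w_0$-fat condition is vacuous: the only minimal position to test is $\bfp_{\min}$ itself, and $\bfp_{\min} \in \bfi$, so there is no minimal position outside $\bfi$ for which the condition must be verified.

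There is essentially no obstacle here; the entire content of the corollary is packaged inside Corollary \ref{cor: transversely related when unique minimal position}. The only subtle point is the status of the empty ideal (which is neither fat nor $w_0$-fat under the definitions given), but this is handled by the standard convention that "ideal" in this context tacitly means non-empty ideal, as is natural since only non-empty ideals produce non-trivial domains of discontinuity.
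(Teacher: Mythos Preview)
Your proof is correct and follows essentially the same route as the paper, which simply states that the corollary is a consequence of Corollary \ref{cor: transversely related when unique minimal position}; you have merely spelled out the details. Your remark about the empty ideal is fair: the statement as written is literally false for $\bfi=\emptyset$, and the paper tacitly excludes this case (cf.\ the paragraph after Definition \ref{dfn: fat and w0fat ideal}, where minimal \emph{non-empty} fat ideals are singled out).
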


For minimal parabolic orbits on some symmetric spaces we can show that fat ideals are always $w_0$-fat (the converse does not hold, see Example \ref{ex: fat ideals lines transverse to hyperplanes and full flags}).

\begin{lem}\label{lem: fat is w0-fat}
Suppose that $\h$ is symmetric, $\liea$ is $\sigma$-invariant, and $\liea_\h:=\liea\cap\lieh$ is a Cartan subspace of $\h$.
Assume that $\liea_\h$ contains a regular element, and pick the Weyl chamber $\liea^+$ in such a way that $\tn{int}(\liea^+)\cap\liea_\h$ is non-empty.
Then every fat ideal $\bfi\subset\bor\backslash\g/\h$ is $w_0$-fat.
\end{lem}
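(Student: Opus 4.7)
The plan is to directly unpack the definitions and apply Proposition \ref{prop: RAMR when one is minimal}, which is exactly the technical input connecting the ``transverse relation'' to the ``$w_0$-action'' in the symmetric setting.

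Concretely, let $\bfi \subset \bor \backslash \g / \h$ be a fat ideal, and let $\bfp_{\min} \notin \bfi$ be a minimal position; I want to show $w_0 \cdot \bfp_{\min} \in \bfi$. Since $\bfi$ is fat, by Definition \ref{dfn: fat and w0fat ideal} there exists some $\bfp' \in \bfi$ with $\bfp' \leftrightarrow \bfp_{\min}$. Now the hypotheses of the lemma are exactly those of Proposition \ref{prop: RAMR when one is minimal} (namely $\h$ symmetric and $\liea_\h \cap \tn{int}(\liea^+) \neq \emptyset$), so applying that proposition with the roles $\bfp = \bfp'$ and $\bfp_{\min}$ as given yields
\[
\bfp' \geq w_0 \cdot \bfp_{\min}.
\]
Finally, since $\bfi$ is an ideal (downward closed for $\leq$) and $\bfp' \in \bfi$, every relative position below $\bfp'$ lies in $\bfi$; in particular $w_0 \cdot \bfp_{\min} \in \bfi$, as desired.

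The argument is essentially a one-line application, so I do not anticipate any obstacle beyond correctly checking that Proposition \ref{prop: RAMR when one is minimal} applies with the hypotheses of the current lemma. The only point worth emphasizing is that we use the regularity assumption on $\liea_\h$ only indirectly, through Proposition \ref{prop: RAMR when one is minimal}, where it was used (via Lemma \ref{lem: for RAMR when one is minimal}) to produce a sequence in $\exp(\liea_\h)$ pushing $\bor^- n \bor^-$-type cosets down to Weyl elements.
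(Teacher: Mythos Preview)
Your proof is correct and follows essentially the same argument as the paper: take a minimal $\bfp_{\min}\notin\bfi$, use fatness to find $\bfp'\in\bfi$ with $\bfp'\leftrightarrow\bfp_{\min}$, apply Proposition \ref{prop: RAMR when one is minimal} to get $\bfp'\geq w_0\cdot\bfp_{\min}$, and conclude by the ideal property. The additional remark about how regularity enters only through Proposition \ref{prop: RAMR when one is minimal} is accurate.
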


\begin{proof}
Let $\bfp_{\min}$ be a minimal relative position so that $\bfp_{\min}\notin\bfi$.
Since $\bfi$ is fat, there is some $\bfp\in\bfi$ so that $\bfp\leftrightarrow\bfp_{\min}$.
Proposition \ref{prop: RAMR when one is minimal} implies $\bfp\geq w_0\cdot\bfp_{\min}$.
Since $\bfi$ is an ideal, this shows $w_0\cdot\bfp_{\min}\in\bfi$.
\end{proof}

\subsection{Examples}\label{subsec: ex of fat ideals}
We now discuss some examples of (non-empty minimal) fat and $w_0$-fat ideals.

\begin{ex}\label{ex: fat ideals lines transverse to hyperplanes and full flags}
Let $\x$ and $\theta=\Delta$ be as in Example \ref{ex: rel position pairs of complementary subspaces} for $p=1$ and $q=3$, that is, $\x$ is the space of pairs consisting on a line transverse to a hyperplane in $\rr^4$.
Then $\bfi=\{\bor\h,\bor w_0w_{13}\h\}$ is a $w_0$-fat ideal, but it is not fat (the position right above $\bor\h$ and $\bor w_0w_{13}\h$ is not transversely related to either $\bor\h$ or $\bor w_0 w_{13} \h$).
All fat and $w_0$-fat ideals are easily computed out of Figure \ref{fig: lines hyperplanes R4}.
\end{ex}

\begin{ex}\label{ex: fat ideals group manifolds}
Let $\x=\g_0$ be a group manifold, as in Example \ref{ex: relative positions in group manifolds}.
By Corollary \ref{cor: minimal fat ideal when unique minimal position} the ideal $\Imin:=\{\bfp_{\min}\}$ is $w_0$-fat (for every possible choice of $\theta$), and so is every ideal.
\end{ex}

\begin{ex}\label{ex: fat ideals Hpq}
Consider the case of Example \ref{ex: relative positions in hpq}, that is, we let $\x=\mathbb{H}^{p,q-1}$ and $\p_\theta=\p_1^{p,q}$, the stabilizer of an isotropic line in $\rr^{p+q}$.
There is only one non--trivial ideal, which is $w_0$-fat by Corollary \ref{cor: minimal fat ideal when unique minimal position}.
\end{ex}

\begin{ex}\label{ex: fat ideals complementary subspaces and projective}
Let $\x=\{(U^+,U^-)\in\mathscr{G}_p(V)\times\mathscr{G}_q(V): U^+\oplus U^-=V\}$ be as in Example \ref{ex: rel position pairs of complementary subspaces}, for general $1\leq p \le q$.
Let us compute the minimal non-empty $w_0$-fat ideals for the case $\theta=\{\alpha_1,\alpha_{d-1}\}$, that is, where $\f_\theta$ is the space of pairs $(\xi^1,\xi^{d-1})$ in $\mathbb{P}(V)\times\mathbb{P}(V^*)$ so that $\xi^1\subset\xi^{d-1}$.

The relative position between $(U^+, U^-)$ and $(\xi^1, \xi^{d-1})$ is minimal if and only if $\xi^1$ is contained in either $U^+$ or $U^-$ and $\xi^{d-1}$ contains either $U^+$ or $U^-$.
If we assume $p > 1$ this leaves us with four different minimal positions, which we can describe schematically as
\[\bfp_1 \coloneqq \{\xi^1\subset U^+\subset\xi^{d-1}\}, \quad \bfp_2 \coloneqq \{(\xi^1\subset U^+) \tn{ }\wedge\tn{ } (U^-\subset\xi^{d-1})\},\]
\[\bfp_3 \coloneqq \{(\xi^1\subset U^-)\tn{ }\wedge \tn{ }(U^+\subset\xi^{d-1})\}, \quad \bfp_4 \coloneqq \{\xi^1\subset U^-\subset\xi^{d-1}\}.\]
Positions $\bfp_2$ and $\bfp_3$ are fixed by $w_0$, while $\bfp_1$ and $\bfp_4$ are permuted.
We can see this from Theorem \ref{thm: matsuki minimal} or alternatively by identifying which positions are transversely related ($\bfp_1 \leftrightarrow \bfp_4$, $\bfp_2 \leftrightarrow \bfp_2$ and $\bfp_3 \leftrightarrow \bfp_3$ and no others) and then applying Corollary \ref{cor: w0 action and transversely related}.
As a consequence, the minimal $w_0$-fat ideals are
\[\bfi_{123} \coloneqq \{\bfp_1, \bfp_2, \bfp_3\} \text{ and } \bfi_{234} \coloneqq \{\bfp_2, \bfp_3, \bfp_4\}.\]
Note that any ideal containing one of these is also $w_0$-fat, in particular the ideal $\Imin = \{\bfp_1, \bfp_2, \bfp_3, \bfp_4\}$ containing all minimal relative positions.

On the other hand, if $p = 1$ and $q > 1$ then the position $\bfp_2$ does not exist and $\bfp_1$ and $\bfp_4$ can be more easily described as
\[\bfp_1 = \{\xi^1 = U^+\}, \qquad \bfp_4 = \{U^- = \xi^{d-1}\}.\]
So we have three positions, one of which, $\bfp_3$, is fixed by the $w_0$ action, and the two minimal $w_0$-fat ideals are $\bfi_{13} \coloneqq \{\bfp_1, \bfp_3\}$ and $\bfi_{34} \coloneqq \{\bfp_3, \bfp_4\}$.
\end{ex}

\begin{ex}\label{ex: fat ideals quadratic forms and projective}
  Choose a symmetric bilinear form of signature $(p,q)$ on a $(p+q)$--dimensional real vector space $V$.
  Let $\g = \PSL(V)$ and $\h = \mathsf{PSO}(p,q)$ the subgroup preserving the form.
  Then $\x = \g/\h$ is a symmetric space and can be identified with the space of signature $(p,q)$ forms on $V$, up to scaling.
  We consider again the case $\theta=\{\alpha_1,\alpha_{d-1}\}$.
  A picture of the Bruhat order in the lowest dimensional interesting example, the case $(p,q) = (1,2)$, is shown in Figure \ref{fig: intro} on the right.
  
  For arbitrary $p$ and $q$, there is a unique minimal position in $\p_\theta\backslash\g / \h$, represented by $(\xi^1,\xi^{d-1})\in\f_\theta$ and $x \in \x$ such that $\xi^1$ is isotropic for the quadratic form $x$, and $\xi^{d-1}$ is the orthogonal complement of $\xi^1$ with respect to this form.
  By Corollary \ref{cor: dod if unique minimal position} every non--empty ideal is $w_0$-fat.
\end{ex}



\section{Discrete subgroups and Anosov representations}\label{sec: anosov}

  In this section we recall the notion of Anosov representations and summarize their main properties.
  All the results presented here are well known and due to Benoist \cite{BenPAGL,BenPAGLII}, Labourie \cite{Lab} and Guichard-Wienhard \cite{GW}, perhaps with the exception of Proposition \ref{prop: directions interior to limit cone approached by mu(Gamma)} (which is a central ingredient in the proof of Theorem \ref{thm: maximality in intro}).
  Subsections \ref{subsec: tits} and \ref{subsec: theta gap and proximality} are intended to fix some terminology needed to understand dynamical and asymptotic properties of elements of $\g$ acting on $\f_\theta$.
  Anosov representations are introduced in Subsection \ref{subsec: anosov reps}.
  In Subsection \ref{subsec: limit cone} we recall the definition and central properties of Benoist's \textit{limit cone}, and also prove Proposition \ref{prop: directions interior to limit cone approached by mu(Gamma)}. 
  In Subsection \ref{subsec: examples of anosov reps} we discuss important examples.

\subsection{Representations of $\g$}\label{subsec: tits}

Tits representations are crucial to understand both the dynamics of discrete subgroups of $\g$, as well as their quantitative properties.

Let $V$ be a finite dimensional real vector space and $\Lambda:\g\to\mathsf{GL}(V)$ be an irreducible representation with derivative $\mathrm d_1\Lambda \colon \lieg \to \mathfrak{gl}(V)$.
A \textit{weight} of $\Lambda$ is a functional $\chi\in\liea^*$ so that the \textit{weight space}
\[V_\chi:=\{v\in V \colon \mathrm{d}_1\Lambda(A)(v) = \chi(A)v \tn{ for all } A\in\liea\}\]
is non-zero.
The set of weights of $\Lambda$ carries a partial order defined by $$\chi\geq \chi' \Leftrightarrow \chi-\chi'=\displaystyle\sum_{\alpha\in\Delta}a_\alpha\alpha,$$
\noindent with $a_\alpha\geq 0$ for all $\alpha$.
A theorem of Tits \cite{Tits} states that there exists a unique weight $\chi_\Lambda$ which is maximal with respect to this order, called the \textit{highest weight} of $\Lambda$.
The representation $\Lambda$ is said to be \textit{proximal} if $V_{\chi_\Lambda}$ is one dimensional.
One also has the following useful proposition.

\begin{prop}[Tits {\cite{Tits}}]\label{prop: tits}
For every $\alpha\in\Delta$ there exists a finite dimensional real vector space $V_\alpha$ and an irreducible proximal representation $\Lambda_\alpha:\g\to\mathsf{GL}(V_\alpha)$, so that the mapping $$\liea\to\rr^{\#\Delta} \colon A\mapsto(\chi_{\Lambda_\alpha}(A))_{\alpha\in\Delta}$$\noindent is an isomorphism.
\end{prop}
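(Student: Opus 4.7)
The plan is to build these representations from the theory of highest weights and the classification of irreducible finite-dimensional representations of the complexification $\lieg_{\cc}$. Let $\omega_\alpha \in \liea^*$ ($\alpha \in \Delta$) denote the fundamental restricted weights, i.e. the basis of $\liea^*$ dual to the family of coroots $\{H_\beta\}_{\beta \in \Delta}$ associated to the restricted simple roots (or more precisely, dual up to the rescalings dictated by the Killing form normalization). By construction the map $A \mapsto (\omega_\alpha(A))_{\alpha \in \Delta}$ is already an isomorphism $\liea \to \rr^{\#\Delta}$, so the final statement will follow once we exhibit, for each $\alpha$, an irreducible proximal representation $\Lambda_\alpha$ whose highest (restricted) weight $\chi_{\Lambda_\alpha}$ is a positive multiple of $\omega_\alpha$.

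First I would fix $\alpha \in \Delta$ and use the highest weight theory for $\lieg_{\cc}$: for any dominant integral weight $\lambda$ of $\lieg_{\cc}$ there exists a unique (up to isomorphism) finite-dimensional irreducible complex representation $V^\lambda$ with highest weight $\lambda$, and its highest weight space is one-dimensional. Next I would invoke the theorem of Tits, which guarantees that for a suitable large integer $n_\alpha$ the representation with highest weight $n_\alpha \widetilde{\omega}_\alpha$ (where $\widetilde{\omega}_\alpha$ is the complex fundamental weight extending $\omega_\alpha$) can be realized over $\rr$ and descends to an honest representation of the linear group $\g$. The existence of such an integer $n_\alpha$ is the substantive content of Tits' paper and is a statement about integrality conditions and Galois descent for the appropriate real form; I would quote this as a black box.

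Having produced the representation $\Lambda_\alpha : \g \to \GL(V_\alpha)$, proximality amounts to checking that the restricted weight space for $\chi_{\Lambda_\alpha} = n_\alpha \omega_\alpha$ in $V_\alpha$ is one-dimensional. Here one argues by decomposing the complex weight space decomposition of $V_\alpha \otimes \cc$ under $\liea_\cc$: the restricted weight $n_\alpha \omega_\alpha$ is extremal (it lies above all other restricted weights with respect to the partial order defined by $\Delta$), and after restriction to $\liea$ the full weight spaces whose complex weights project to $n_\alpha \omega_\alpha$ collapse to the single highest weight line produced in the previous step, because $\omega_\alpha$ is regular along the orbit of the little Weyl group that permutes the complex extensions. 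Finally, since each $\chi_{\Lambda_\alpha}$ is a positive multiple of $\omega_\alpha$ and $\{\omega_\alpha\}_{\alpha \in \Delta}$ is a basis of $\liea^*$, the map $A \mapsto (\chi_{\Lambda_\alpha}(A))_{\alpha \in \Delta}$ is an isomorphism.

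The main obstacle is the middle step: ensuring that the complex highest weight representation descends to the real group $\g$, and doing so while keeping the highest restricted weight space one-dimensional. This is not automatic and genuinely requires Tits' construction, which handles case by case the possible obstructions coming from the various real forms (in particular when $\g$ has non-trivial restricted root multiplicities or when the complex Weyl group is strictly larger than the restricted Weyl group). In practice one would not reprove this here but cite \cite{Tits} directly.
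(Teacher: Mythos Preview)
The paper does not actually prove this proposition: it is stated as a result of Tits and cited directly, with no argument given. So there is no ``paper's own proof'' to compare against; the authors treat it as a black box, exactly as you end up doing in your final paragraph.

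Your outline is the standard one and is broadly correct: pick representations whose highest restricted weights are positive multiples of the fundamental restricted weights $\omega_\alpha$, then the isomorphism statement is immediate. The one place where your sketch is imprecise is the proximality step. You write that the complex weight spaces projecting to $n_\alpha\omega_\alpha$ ``collapse to the single highest weight line \dots\ because $\omega_\alpha$ is regular along the orbit of the little Weyl group''. This is not quite an argument: several distinct complex weights of $V_\alpha\otimes\cc$ can restrict to the same functional on $\liea$, and nothing you have said rules out that more than one of them restricts to the top restricted weight. Ensuring a one-dimensional highest restricted weight space is precisely part of what Tits' construction arranges (by the specific choice of $n_\alpha$ and, in some real forms, by a careful choice among representations with the same restricted highest weight). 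Since you already concede that this middle step must be cited from \cite{Tits}, the gap is not fatal to your write-up, but you should not present the proximality claim as if it followed from the vague regularity remark; just fold it into the citation.
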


\begin{ex}
Let $\g$ be as in Example \ref{ex: flags sld}.
Simple roots correspond to the choice of an integer $1\leq k\leq d-1$, and a set of representations as in Proposition \ref{prop: tits} is in this case the set of $k^{\tn{th}}$-exterior powers of $V$.
\end{ex}

We fix from now on a set of representations $\{\Lambda_\alpha\}_{\alpha\in\Delta}$ as in Proposition \ref{prop: tits} and let $\theta\subset\Delta$ be non-empty.
The representations $\Lambda_\alpha$ induce $\g$-equivariant maps $\f_\theta\to\mathbb{P}(V_\alpha)$ which we also denote by $\Lambda_\alpha$.
Taking duals, we also have $\g$-equivariant maps $\Lambda_\alpha^- \colon \f_\theta^-\to\mathbb{P}(V_\alpha^*)$.
Two flags in $\f_\theta$ (resp. $\f_\theta^-$) coincide if and only if their images by $\Lambda_\alpha$ (resp. $\Lambda_\alpha^-$) coincide for every $\alpha\in\theta$.
A flag $\xi_+\in\f_\theta$ is transverse to $\xi_-\in\f_\theta^-$ if and only if the line $\Lambda_\alpha(\xi_+)$ is not contained in the hyperplane $\Lambda_\alpha^-(\xi_-)$, for every $\alpha\in\theta$.

For each $\alpha\in\theta$, we fix a $\ko$-invariant Euclidean norm $\Vert\cdot\Vert_\alpha$ (resp. $\Vert\cdot\Vert_\alpha^*$) on $V_\alpha$ (resp. $V_\alpha^*$) so that $\Lambda_\alpha(\exp(A))$ (resp. $\Lambda_\alpha^-(\exp(A))$) is self-adjoint for every $A\in\liea$ (c.f. \cite[Lemme 2.2]{BenPAGLII}).
This defines a $\ko$-invariant distance $d_\alpha(\cdot,\cdot)$ on $\mathbb{P}(V_\alpha)$ (resp. $d_\alpha^-(\cdot,\cdot)$ on $\mathbb{P}(V_\alpha^*)$).
Given $\varepsilon>0$, we let $$b_\varepsilon(\xi_+):=\{\xi \in\f_\theta: d_\alpha(\Lambda_\alpha(\xi),\Lambda_\alpha(\xi_+))\leq \varepsilon  \tn{ for all } \alpha\in\theta\}$$ \noindent and $$B_\varepsilon(\xi_-):=\{\xi\in\f_\theta: d_\alpha(\Lambda_\alpha(\xi),\Lambda_\alpha^-(\xi_-))\geq \varepsilon \tn{ for all } \alpha\in\theta\} .$$\noindent In the last equality, $\Lambda_\alpha^-(\xi_-)$ is seen as a compact subset of $\mathbb{P}(V_\alpha)$ by identifying $\mathbb{P}(V_\alpha^*)$ with the space of linear hyperplanes of $V_\alpha$.

\subsection{$\theta$-gap and proximality}\label{subsec: theta gap and proximality}
We now turn to the dynamics of elements $g\in\g$ acting on $\f_\theta$.
We say that $g$ has a \textit{gap of index $\theta$} (or a $\theta$-\textit{gap}) if $\alpha(\mu(g))>0$ for all $\alpha\in\theta$.
In that case, if $g=k\exp(\mu(g))l$ is a Cartan decomposition of $g$ we let $$U_\theta(g):=k\p_\theta\in\f_\theta \tn{ and } S_\theta(g):=l^{-1}\p_\theta^-\in\f_\theta^-.$$
\noindent We call $U_\theta(g)$ (resp. $S_\theta(g)$) the \textit{Cartan attractor} (resp. \textit{Cartan repellor}) of $g$.
Note that these two flags are not necessarily transverse, nor fixed by $g$.
When $\theta=\Delta$ we simply denote $U(g):=U_\Delta(g)$ and $S(g):=S_\Delta(g)$.
The following remark is classical (see e.g. \cite[Lemma 3.1.3]{SteThesis} for a proof).

\begin{rem}\label{rem: dynamics g cartan attractor and repellor}
Fix a positive $\varepsilon$. There exists $L>0$ so that for every $g\in\g$ satisfying $\displaystyle\min_{\alpha\in\theta}\alpha(\mu(g))>L$ one has $$g\cdot B_\varepsilon(S_\theta(g))\subset b_\varepsilon(U_\theta(g)).$$

\end{rem}

The \textit{Jordan projection} of $\g$ is the map $\lambda:\g\to\liea^+$ defined by $$\lambda(g):=\displaystyle\lim_{p\to\infty}\frac{\mu(g^p)}{p}.$$
\noindent The element $g\in\g$ is said to be \textit{$\theta$-proximal} (or \textit{proximal on $\f_\theta$}) if $\alpha(\lambda(g))>0$ for all $\alpha\in\theta$.
In this case, the action of $g$ on $\f_\theta$ (resp. $\f_\theta^-$) has a fixed point $g_+=g_+^\theta$ (resp. $g_-=g_-^\theta$) so that $g_+$ is transverse to $g_-$ and $$\displaystyle\lim_{p\to\infty}g^p\cdot \xi= g_+$$ \noindent for every $\xi\in\f_\theta$ transverse to $g_-$. 
Explicitly, for every $\alpha\in\theta$, $\Lambda_\alpha(g_+)$ is the eigenline of $\Lambda_\alpha(g)$ associated to the highest eigenvalue of $\Lambda_\alpha(g)$, and $\Lambda_\alpha^-(g_-)$ is the complementary invariant hyperplane.

One also has the following quantified version of proximality which is useful for estimates.
Given $0<\varepsilon\leq r$, a $\theta$-proximal element $g\in\g$ is said to be $(r,\varepsilon)$-\textit{proximal} if $$d_\alpha(\Lambda_\alpha(g_+),\Lambda_\alpha^-(g_-))\geq 2r$$ \noindent for every $\alpha\in\theta$, and $g\cdot B_\varepsilon(g_-)\subset b_\varepsilon(g_+)$.

In the rest of the subsection we focus on estimating the Cartan projection of products of proximal elements (c.f. Lemma \ref{lem: product of repsilon loxodromic} below).
Even though the discussion that follows can be done for general $\theta\subset\Delta$, we will only apply it in the case $\theta=\Delta$.
Hence we restrict our attention to that case, which makes things easier to state.

A $\Delta$-proximal element $g$ is sometimes called \textit{loxodromic}.
It is called $(r,\varepsilon)$-\textit{loxodromic} if $\Lambda_\alpha(g)$ is $(r,\varepsilon)$-proximal for every $\alpha\in\Delta$.

An important object when describing the Cartan projection of a product is a vector $[g_0,\dots,g_s]\in\liea$ which quantifies how far the repelling and attracting fixed points of $g_0,\dots,g_s\in\g$ are from each other, for a given sequence $g_0,g_1,\dots,g_s\in\g$ of loxodromic elements with $(g_0)_\pm,\dots,(g_s)_\pm$ in general position.
As we don't really need the definition of $[g_0,\dots,g_s]$ we refer the interested reader to Benoist \cite[p.3 \& Section 3]{BenPAGLII} for details. 
We record however the following important property of $[g_0,\dots,g_s]$ which, together with Lemma \ref{lem: product of repsilon loxodromic}, is the only thing that we will need about this vector.

\begin{rem}\label{rem: cross ratio depends only on flags}
The vector $[g_0,\dots,g_s]\in\liea$ only depends on the flags $(g_0)_\pm, \dots, (g_s)_\pm$, not on the elements $g_0,\dots,g_s\in\g$ themselves. 
\end{rem}

\begin{lem}[See e.g. Benoist {\cite[Lemme 3.4]{BenPAGLII}}]\label{lem: product of repsilon loxodromic}
Fix $r>0$ and $\delta> 0$.
Then for every small enough $\varepsilon$, the following is satisfied: consider a tuple $g_1,\dots,g_s=g_0$ of $(r,\varepsilon)$-loxodromic elements such that for every $j=0,\dots,s-1$ and every $\alpha\in\Delta$ one has $$d_\alpha(\Lambda_\alpha ((g_{j+1})_+),\Lambda_\alpha^-((g_j)_-))\geq 6r.$$
\noindent Then the product $g=g_1\dots g_s$ is $(2r,2\varepsilon)$-loxodromic with $$d_\alpha(\Lambda_\alpha(g_-),\Lambda_\alpha((g_s)_-))\leq \delta \tn{ and } d_\alpha(\Lambda_\alpha(g_+),\Lambda_\alpha((g_1)_+))\leq \delta$$
\noindent for every $\alpha\in\Delta$.
Furthermore,
$$\left\Vert \mu( g) - (\lambda(g_1)+\dots+\lambda(g_s))+[g_1,\dots,g_s] \right\Vert<\delta.$$
\end{lem}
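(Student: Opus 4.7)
The plan is to prove the lemma representation by representation, using the Tits representations $\Lambda_\alpha$ of Proposition \ref{prop: tits}. Since $A \mapsto (\chi_{\Lambda_\alpha}(A))_{\alpha\in\Delta}$ is a linear isomorphism $\liea \to \rr^{\#\Delta}$, it suffices to prove proximality, fixed-flag location, and top singular value estimates for each $\Lambda_\alpha(g_1\cdots g_s) \in \mathsf{GL}(V_\alpha)$ separately. Fix $\alpha\in\Delta$ and set $A_j \coloneqq \Lambda_\alpha(g_j)$, each of which is $(r,\varepsilon)$-proximal with attracting line $\Lambda_\alpha((g_j)_+) = \rr v_j^+$ and repelling hyperplane $\Lambda_\alpha^-((g_j)_-) = \ker\phi_j^-$, normalized in the chosen $\ko$-invariant norm. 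Because that norm diagonalizes $\Lambda_\alpha(\exp \liea^+)$, the polar decomposition of $A_j$ together with proximality produces an approximate rank-one factorization
\[A_j = e^{\chi_{\Lambda_\alpha}(\lambda(g_j))}\bigl(v_j^+ \otimes \phi_j^-\bigr) + R_j,\]
where the ratio $\|R_j\|_{\mathrm{op}}/e^{\chi_{\Lambda_\alpha}(\lambda(g_j))}$ tends to $0$ with $\varepsilon$, since this ratio is controlled by the spectral gap of $A_j$, which in turn is controlled by the $(r,\varepsilon)$-proximality constants.

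First I would handle proximality via ping-pong. The hypothesis $d_\alpha((g_{j+1})_+,(g_j)_-) \geq 6r$ forces $b_\varepsilon((g_{j+1})_+) \subset B_{3r}((g_j)_-)$ for small $\varepsilon$, while $A_j$ sends $B_\varepsilon((g_j)_-)$ into $b_\varepsilon((g_j)_+)$ by Remark \ref{rem: dynamics g cartan attractor and repellor}. A straightforward induction then shows that $A_1\cdots A_s$ maps $B_\varepsilon((g_s)_-)$ into $b_\varepsilon((g_1)_+)$, and the slack given by $6r$ versus the proximality datum $r$ allows one to upgrade to $(2r,2\varepsilon)$-proximality. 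The attracting fixed point of the product is trapped in $b_\varepsilon((g_1)_+)$ and the repelling hyperplane in $B_\varepsilon((g_s)_-)$, yielding the flag-distance inequalities once $\varepsilon$ is small in terms of $\delta$.

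Next I would address the Cartan projection estimate. Expanding the product using the rank-one approximation and collecting leading terms, the dominant part of $A_1\cdots A_s$ is the rank-one operator
\[\Bigl(\prod_{j=1}^{s} e^{\chi_{\Lambda_\alpha}(\lambda(g_j))}\Bigr)\Bigl(\prod_{j=1}^{s-1}\langle\phi_j^-,v_{j+1}^+\rangle\Bigr)\,v_1^+ \otimes \phi_s^-.\]
Each term in the residual involves at least one factor $R_j$, and the $6r$-transversality keeps all the cross pairings $|\langle\phi_j^-,v_{j+1}^+\rangle|$ uniformly bounded below, so that for $\varepsilon$ small enough the residual is negligible compared to the leading contribution. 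Taking operator norms and logarithms, and using the definition of $[g_1,\ldots,g_s]$ from \cite{BenPAGLII} (which satisfies $\chi_{\Lambda_\alpha}([g_1,\ldots,g_s]) = -\sum_{j=1}^{s-1}\log|\langle\phi_j^-,v_{j+1}^+\rangle|$, consistent with Remark \ref{rem: cross ratio depends only on flags}), one obtains
\[\Bigl|\chi_{\Lambda_\alpha}(\mu(g)) - \chi_{\Lambda_\alpha}\Bigl(\textstyle\sum_{j=1}^s \lambda(g_j) - [g_1,\ldots,g_s]\Bigr)\Bigr| < \delta/\#\Delta.\]
Inverting the Tits isomorphism assembles these componentwise bounds into the desired estimate for $\mu(g)$.

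The main obstacle is that the constants in both steps must be \emph{uniform in the length} $s$. A careless bookkeeping would let the rank-one remainders $R_j$ accumulate over the $s$-fold composition. The key point is that the $6r$ gap ensures that the leading rank-one piece at each step is directed precisely toward the attractor direction of the next factor, so that each $R_j$ is absorbed by the dominant rank-one contribution of $A_{j-1}$ at the subsequent step. This absorption is what makes it possible to choose $\varepsilon = \varepsilon(r,\delta)$ independently of the number of factors $s$, which is the genuine technical content of the lemma.
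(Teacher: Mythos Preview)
The paper does not give its own proof of this lemma: it is stated with a direct citation to Benoist \cite[Lemme 3.4]{BenPAGLII} and used as a black box. So there is no ``paper's proof'' to compare against; your sketch is essentially an outline of how Benoist's original argument proceeds.

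That said, your outline captures the right structure. Reducing to each Tits representation $\Lambda_\alpha$, running ping-pong to get $(2r,2\varepsilon)$-proximality and the location of the fixed flags, and then extracting the dominant rank-one piece of the product to read off the top singular value is exactly the strategy in \cite{BenPAGLII}. You also correctly flag the real content: the $6r$ separation guarantees that the error terms $R_j$ are absorbed step by step, giving an $\varepsilon$ independent of $s$. One minor caution: your displayed formula for $\chi_{\Lambda_\alpha}([g_1,\ldots,g_s])$ is not quite what Benoist writes, since the convention $g_s=g_0$ makes the bracket a cyclic quantity and the normalization of the vectors $v_j^+,\phi_j^-$ enters; you should check this against the definition in \cite[\S3]{BenPAGLII} rather than rederive it, or else the constant you obtain will be off by a bounded term depending on the endpoints.
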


\subsection{Anosov representations}\label{subsec: anosov reps}

Anosov representations form a stable class of (almost) faithful and discrete representations of word hyperbolic groups into $\g$ providing a framework unifying examples of different nature.
They are nowadays understood as a higher rank generalization of convex co-compact representations into rank one Lie groups.
They were introduced by Labourie \cite{Lab} for fundamental groups of closed negatively curved manifolds, and then extended by Guichard-Wienhard \cite{GW} to general word hyperbolic groups.
The definition that we present here is not the original one, but a simpler equivalent one given in \cite{KLPDynGeomCharacterizations,GGKW,BPS}.

Let $\Gamma$ be a finitely generated group and $\vert\cdot\vert$ be the word length associated to a finite symmetric generating set, which will be fixed from now on.
Let $\theta\subset\Delta$ be a non-empty set.
A representation $\rho:\Gamma\to\g$ is said to be $\theta$-\textit{Anosov} if there exist positive constants $c$ and $C$ so that \begin{equation}\label{eq def anosov}
\alpha(\mu(\rho(\gamma)))\geq c\vert\gamma\vert-C
\end{equation}
\noindent for every $\gamma\in\Gamma$ and $\alpha\in\theta$.

Note that $\theta$-Anosov representations are quasi-isometrically embedded. In particular, they are discrete and have finite kernels.
Further, by Kapovich-Leeb-Porti \cite[Theorem 1.4]{KLPMorse} (see also \cite[Section 3]{BPS}), if $\rho$ is $\theta$-Anosov then $\Gamma$ is word hyperbolic (throughout, we assume that $\Gamma$ is non-elementary).
The Gromov boundary of $\Gamma$ will be denoted by $\bg$, and we also let $\bgc$ be the set of ordered pairs of distinct points in $\bg$.
If $\gamma\in\Gamma$ has infinite order, it has two fixed points on $\bg$, a repelling one (denoted by $\gamma_-$), and an attracting one (denoted by $\gamma_+$).
We let $\gh\subset\Gamma$ be the subset consisting of infinite order elements.
By Gromov \cite[Corollary 8.2.G]{Gro}, the set $\{(\gamma_-,\gamma_+)\}_{\gamma\in\gh}$ is dense in $\bgc$.

Let $\rho\colon\Gamma\to\g$ be $\theta$-Anosov.
One can check that $\rho$ is also $\iota(\theta)$-Anosov.
Hence, we will assume from now on that $\theta=\iota(\theta)$.
Central in the theory is the following property (see \cite{BPS,GGKW,KLPDynGeomCharacterizations}): every $\theta$-Anosov representation $\rho$ admits a \textit{limit map}.
By definition, this is a continuous, $\rho$-equivariant, dynamics-preserving map
\[\xi_{\rho,\theta} \colon \bg\to\f_\theta\]
which is moreover transverse, i.e. $\xi_{\rho,\theta}(z)$ is transverse to $\xi_{\rho,\theta}(z')$ whenever $z \neq z'$.
We recall here that $\xi_{\rho,\theta}$ is said to be \textit{dynamics preserving} if for every $\gamma\in\gh$, the element $\xi_{\rho,\theta}(\gamma_+)$ (resp. $\xi_{\rho,\theta}(\gamma_-)$) is an attractive (resp. repelling) fixed point of $\rho(\gamma)$ acting on $\f_\theta$.
In particular, $\rho(\gamma)$ is proximal on $\f_\theta$ and $$\rho(\gamma)_\pm=\xi_{\rho,\theta}(\gamma_\pm).$$
\noindent As a consequence, the limit maps are injective and uniquely determined by $\rho$.
When $\rho$ is $\Delta$-Anosov, we denote $\xi_\rho:=\xi_{\rho,\Delta}$.

By Guichard-Wienhard \cite[Theorem 5.13]{GW}, the limit map varies continuously with the representation.
The image $\Lambda_\rho^\theta:=\xi_{\rho,\theta}(\bg)$ is sometimes called the $\theta$-\textit{limit set} of $\rho$. It has the following important characterization.

\begin{prop}[See {\cite[Theorem 5.3]{GGKW}} or {\cite[Subsection 3.4]{BPS}}]\label{prop: limit set for anosov with cartan attractors}
Let $\rho:\Gamma\to\g$ be a $\theta$-Anosov representation.
Then $\xi_{\rho,\theta}(\bg)$ coincides with the set of accumulation points of sequences of the form $\{U_\theta(\rho(\gamma_p))\}$, where $\gamma_p\to\infty$.
Furthermore, given a positive $\delta$ one has $$d_\alpha(\Lambda_\alpha(U_\theta(\rho(\gamma))),\Lambda_\alpha(\rho(\gamma)_+))<\delta$$
\noindent for every $\gamma\in\Gamma$ with sufficiently large $\vert\gamma\vert$, and every $\alpha\in\theta$.
\end{prop}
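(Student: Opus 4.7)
The plan is to reduce both assertions to a single core statement: \emph{for every sequence $\gamma_p\in\Gamma$ converging to a point $z\in\bg$ in the Cayley graph compactification $\Gamma\cup\bg$, one has $U_\theta(\rho(\gamma_p))\to\xi_{\rho,\theta}(z)$ in $\f_\theta$.}

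To establish the core statement, I would extract a subsequence so that $U_\theta(\rho(\gamma_p))\to\xi$, $S_\theta(\rho(\gamma_p))\to\eta$, and $\gamma_p^{-1}\to z^-\in\bg$. The Anosov condition (\ref{eq def anosov}) forces $\alpha(\mu(\rho(\gamma_p)))\to\infty$ for every $\alpha\in\theta$, so Remark \ref{rem: dynamics g cartan attractor and repellor} applies: for each $\varepsilon>0$ and $p$ large,
\[\rho(\gamma_p)\cdot B_\varepsilon(S_\theta(\rho(\gamma_p)))\subset b_\varepsilon(U_\theta(\rho(\gamma_p))).\]
Since $\Gamma$ acts on $\bg$ as a convergence action, $\gamma_p\cdot w\to z$ whenever $w\in\bg\setminus\{z^-\}$. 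I would pick such a $w$ with $\xi_{\rho,\theta}(w)$ quantitatively transverse to $\eta$, which is possible because transversality is an open condition and the limit map $\xi_{\rho,\theta}$ is continuous and injective with transverse image. Then $\xi_{\rho,\theta}(w)\in B_\varepsilon(S_\theta(\rho(\gamma_p)))$ for all large $p$, and equivariance plus continuity of $\xi_{\rho,\theta}$ yield
\[\xi_{\rho,\theta}(z)=\lim_p \rho(\gamma_p)\cdot\xi_{\rho,\theta}(w)\in b_\varepsilon(\xi).\]
Letting $\varepsilon\to 0$ forces $\xi=\xi_{\rho,\theta}(z)$.

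Granted the core statement, the first assertion follows at once: any accumulation point of $\{U_\theta(\rho(\gamma_p))\}_{\gamma_p\to\infty}$ is of the form $\xi_{\rho,\theta}(z)$ (extract a subsequence with $\gamma_p\to z\in\bg$ using compactness of $\Gamma\cup\bg$), and conversely every $\xi_{\rho,\theta}(z)\in\Lambda_\rho^\theta$ is realized by choosing any sequence $\gamma_p\to z$. For the ``furthermore'' statement I would argue by contradiction, assuming there exist $\delta>0$, $\alpha_0\in\theta$ and a sequence $\gamma_p$ with $|\gamma_p|\to\infty$ and $\rho(\gamma_p)$ $\theta$-proximal violating the bound. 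After extracting so that $\gamma_p\to z$ and $\gamma_p^{-1}\to z^-$, applying the core statement to both $\rho(\gamma_p)$ and $\rho(\gamma_p^{-1})$ (together with the identification of $U_\theta(g^{-1})$ with $S_\theta(g)$ via the opposition involution, which uses $\theta=\iota(\theta)$) gives $U_\theta(\rho(\gamma_p))\to\xi_{\rho,\theta}(z)$ and $S_\theta(\rho(\gamma_p))\to\xi_{\rho,\theta}^-(z^-)$, with the two limits transverse by transversality of the limit map. Since $\rho(\gamma_p)_+$ is a fixed point of $\rho(\gamma_p)$ and eventually lies in $B_\varepsilon(S_\theta(\rho(\gamma_p)))$, Remark \ref{rem: dynamics g cartan attractor and repellor} forces $\rho(\gamma_p)_+\in b_\varepsilon(U_\theta(\rho(\gamma_p)))$, contradicting the hypothesis once $\varepsilon<\delta/2$.

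The main obstacle will be producing the required quantitative transversality: in the core step, of $\xi_{\rho,\theta}(w)$ to the a priori unknown flag $\eta$; and in the contradiction step, of the Jordan attractor $\rho(\gamma_p)_+$ to the Cartan repellor $S_\theta(\rho(\gamma_p))$. The latter is what forces the argument to be carried out symmetrically for $\gamma_p$ and $\gamma_p^{-1}$, identifying limits of Cartan repellors with images of $z^-$ under the dual limit map.
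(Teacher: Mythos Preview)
The paper does not give a proof of this proposition: it is stated with references to \cite{GGKW} and \cite{BPS} and used as a black box. So there is no ``paper's proof'' to compare against.

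Your outline is the standard dynamical argument and is essentially correct, but the justification you give for the key transversality step is insufficient. In the core statement you need some $w\in\bg\setminus\{z^-\}$ with $\xi_{\rho,\theta}(w)$ transverse to the subsequential limit $\eta=\lim S_\theta(\rho(\gamma_p))$. You write that this ``is possible because transversality is an open condition and the limit map is continuous and injective with transverse image'', but pairwise transversality of $\Lambda_\rho^\theta$ says nothing about transversality to an arbitrary $\eta\in\f_\theta^-$ which is not yet known to lie in the image of the dual limit map. That identification $\eta=\xi_{\rho,\theta}^-(z^-)$ is precisely the dual instance of what you are proving, so invoking it here is circular. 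One clean way around this is to run the argument for $\gamma_p$ and $\gamma_p^{-1}$ simultaneously: choose $w_1\neq w_2$ in $\bg\setminus\{z,z^-\}$; then $\xi_{\rho,\theta}(w_1)$ is transverse to $\xi_{\rho,\theta}^-(w_2)$, and the pair $(\rho(\gamma_p)\cdot\xi_{\rho,\theta}(w_1),\rho(\gamma_p)\cdot\xi_{\rho,\theta}^-(w_2))$ stays in the open $\g$-orbit while converging to $(\xi_{\rho,\theta}(z),\xi_{\rho,\theta}^-(z))$ by equivariance. Comparing with Remark~\ref{rem: dynamics g cartan attractor and repellor} in each Tits representation forces both $\xi=\xi_{\rho,\theta}(z)$ and $\eta=\xi_{\rho,\theta}^-(z^-)$ at once, dissolving the circularity. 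Alternatively, note that in the cited sources the limit map is \emph{constructed} from limits of Cartan attractors, so the first assertion is essentially definitional there; your approach reverses the logical order, which is fine but requires this extra care.

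In the ``furthermore'' part the same remark applies: to place $\rho(\gamma_p)_+$ inside $B_\varepsilon(S_\theta(\rho(\gamma_p)))$ you need $\rho(\gamma_p)_+=\xi_{\rho,\theta}((\gamma_p)_+)$ to be uniformly transverse to $S_\theta(\rho(\gamma_p))\to\xi_{\rho,\theta}^-(z^-)$, and for this you should also argue that $(\gamma_p)_+\to z$ (which follows from the convergence property since $(\gamma_p)_+$ is fixed by $\gamma_p$ and any accumulation point of $(\gamma_p)_+$ other than $z^-$ is sent to $z$). Once the core statement is established symmetrically as above, this goes through.
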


We also have the following lemma.

\begin{lem}[c.f. Sambarino {\cite[Lemma 5.7]{SamQuantitative}}]\label{lem: anosov and repsilon proximality}
Let $\rho:\Gamma\to\g$ be a $\theta$-Anosov representation and fix real numbers $0<\varepsilon\leq r$.
Then there exists a positive $L$ with the following property: for every $\gamma\in\gh$ satisfying $\vert\gamma\vert>L$ and such that $$d_\alpha(\Lambda_\alpha(\rho(\gamma)_+),\Lambda_\alpha^-(\rho(\gamma)_-))\geq 2r$$ \noindent holds for every $\alpha\in\theta$, one has that $\rho(\gamma)$ is $(r,\varepsilon)$-proximal on $\f_\theta$.
\end{lem}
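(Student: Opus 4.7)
The hypothesis already guarantees the first condition in the definition of $(r,\varepsilon)$-proximality, namely $d_\alpha(\Lambda_\alpha(\rho(\gamma)_+),\Lambda_\alpha^-(\rho(\gamma)_-))\geq 2r$. Moreover, for $\vert\gamma\vert$ large enough the Anosov condition (\ref{eq def anosov}) gives $\alpha(\mu(\rho(\gamma)))\geq c\vert\gamma\vert-C$ as large as we wish, so in particular $\rho(\gamma)$ has a $\theta$-gap and, being in $\gh$, it is $\theta$-proximal with fixed points $\rho(\gamma)_\pm = \xi_{\rho,\theta}^\pm(\gamma_\pm)$. Hence everything reduces to the dynamical statement $\rho(\gamma)\cdot B_\varepsilon(\rho(\gamma)_-)\subset b_\varepsilon(\rho(\gamma)_+)$, which I would prove by comparing $\rho(\gamma)_\pm$ with the Cartan attractor and repellor of $\rho(\gamma)$.

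More precisely, my plan is to fix an auxiliary $\varepsilon' = \varepsilon/2$ (or more generally a small $\delta<\varepsilon$ and take $\varepsilon'=\varepsilon-\delta$). For $\vert\gamma\vert$ sufficiently large, Remark \ref{rem: dynamics g cartan attractor and repellor} applied to $g=\rho(\gamma)$ yields
\[
\rho(\gamma)\cdot B_{\varepsilon'}(S_\theta(\rho(\gamma)))\subset b_{\varepsilon'}(U_\theta(\rho(\gamma))).
\]
On the other hand, Proposition \ref{prop: limit set for anosov with cartan attractors} applied to $\rho$ (regarded as taking values in $\f_\theta$) says that $U_\theta(\rho(\gamma))$ is arbitrarily close to $\rho(\gamma)_+=\xi_{\rho,\theta}(\gamma_+)$ for $\vert\gamma\vert$ large; applying the same proposition with $\f_\theta$ replaced by $\f_\theta^-$ (using $\iota(\theta)=\theta$, so that $\rho$ is also $\theta$-Anosov with respect to the opposite flag manifold) gives analogously that $S_\theta(\rho(\gamma))$ is arbitrarily close to $\rho(\gamma)_-=\xi_{\rho,\theta}^-(\gamma_-)$.

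Combining these two inputs, I would use a triangle-inequality-type argument in each $(\mathbb{P}(V_\alpha),d_\alpha)$ for $\alpha\in\theta$: if $\delta>0$ is chosen so that all the above closeness statements hold with error at most $\delta$, then on the one hand $B_\varepsilon(\rho(\gamma)_-)\subset B_{\varepsilon-\delta}(S_\theta(\rho(\gamma)))=B_{\varepsilon'}(S_\theta(\rho(\gamma)))$ (a line $\varepsilon$-far from the hyperplane $\Lambda_\alpha^-(\rho(\gamma)_-)$ is at least $(\varepsilon-\delta)$-far from the nearby hyperplane $\Lambda_\alpha^-(S_\theta(\rho(\gamma)))$), and on the other hand $b_{\varepsilon'}(U_\theta(\rho(\gamma)))\subset b_\varepsilon(\rho(\gamma)_+)$. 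Chaining these inclusions with Remark \ref{rem: dynamics g cartan attractor and repellor} gives exactly $\rho(\gamma)\cdot B_\varepsilon(\rho(\gamma)_-)\subset b_\varepsilon(\rho(\gamma)_+)$, completing the proof.

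The only genuine technicality is the continuity of the map $\xi_-\mapsto \Lambda_\alpha^-(\xi_-)$ (seen as the set of hyperplanes) and the resulting comparison between $B_\varepsilon(\xi_-)$ and $B_\varepsilon(\xi_-')$ when $\xi_-,\xi_-'$ are close in $\f_\theta^-$. This is a routine compactness argument — the map from $\f_\theta^-$ to the Hausdorff space of hyperplanes is continuous and $\f_\theta^-$ is compact, so distance to a hyperplane varies uniformly continuously in the hyperplane — and I expect it to be essentially the only step requiring any care; everything else is bookkeeping of uniform constants in the Anosov inequality together with Proposition \ref{prop: limit set for anosov with cartan attractors}.
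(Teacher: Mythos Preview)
Your argument is correct and is essentially the standard one: compare the Cartan attractor/repellor with the genuine attracting/repelling fixed points via Proposition~\ref{prop: limit set for anosov with cartan attractors}, then feed this into Remark~\ref{rem: dynamics g cartan attractor and repellor} and conclude by the triangle inequality. Note that the paper does not actually prove this lemma --- it is simply quoted from Sambarino \cite[Lemma 5.7]{SamQuantitative} --- so there is no ``paper's own proof'' to compare against; your sketch is exactly the kind of argument that reference contains.
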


\subsection{Limit cone}\label{subsec: limit cone}

Let $\Xi<\g$ be an infinite discrete subgroup.
The \textit{asymptotic cone} of $\Xi$, denoted by $\conexi$, is the set of limit points of the form $$\displaystyle\lim_{p\to \infty}\frac{\mu(\gamma_p)}{t_p},$$
\noindent for sequences $\{\gamma_p\}\subset\Xi$ and $t_p\to\infty$.
On the other hand, the \textit{limit cone} of $\Xi$ is the smallest closed cone containing the set $\lambda(\Xi)$.
These objects were introduced in foundational work by Benoist \cite{BenPAGL}.
Benoist showed that, when $\Xi$ is Zariski dense, the limit cone and the asymptotic cone coincide. 
Further, $\conexi$ is convex and has non-empty interior.
Lemma \ref{lem: for prop directions interior to limit cone approached by mu(Gamma)} below, which will be used in the proof of Proposition \ref{prop: directions interior to limit cone approached by mu(Gamma)}, also follows from combining several results by Benoist.

\begin{lem}[Benoist \cite{BenPAGL,BenPAGLII}]\label{lem: for prop directions interior to limit cone approached by mu(Gamma)}
Let $\Xi<\g$ be a Zariski dense discrete subgroup.
Fix $X_0\in\tn{int}(\conexi)$ and $\delta>0$.
Then there exist $r>0$, $A\in\liea$ and loxodromic elements $g_0,\dots,g_t\in\Xi$ with the following properties:
\begin{enumerate}
\item for every $i,j=0,\dots,t$ and every $\alpha\in\Delta$ we have $$d_\alpha(\Lambda_\alpha^-((g_i)_-),\Lambda_\alpha((g_j)_+))\geq r,$$
\item the vector $X_0$ belongs to $\tn{int}(\calc)$, where $\calc:=(\rr_{\geq 0})\lambda(g_0)+\dots+(\rr_{\geq 0})\lambda(g_t)$, and
\item the set $\mathbb{N}\lambda(g_0)+\dots+\mathbb{N}\lambda(g_t)$
\noindent is $\delta$-dense in $A+\calc$.
\end{enumerate}
\end{lem}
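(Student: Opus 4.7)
The plan is to assemble the statement from three results of Benoist (already cited in the statement) together with an essentially standard additive-combinatorics observation. Working inside $\liea$, I must produce loxodromic elements $g_0, \ldots, g_t \in \Xi$ whose Jordan projections satisfy: (i) their positive cone $\calc$ contains $X_0$ in its interior; (ii) the additive subgroup they generate is $\delta$-dense in $\liea$; and (iii) their attracting and repelling flags are pairwise in general position, as measured by $d_\alpha$ in each $\mathbb{P}(V_\alpha)$ for $\alpha \in \Delta$.

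I would proceed in four steps. \emph{First}, using that $X_0 \in \tn{int}(\conexi)$ together with Benoist's density of Jordan projections in $\conexi$ (and the fact that $\conexi$ has non-empty interior by Zariski density), I would pick loxodromic $h_0, \ldots, h_s \in \Xi$ whose Jordan projections form a basis of $\liea$ and place $X_0$ in the interior of $\calc_0 := \sum_i \rr_{\geq 0}\lambda(h_i)$. \emph{Second}, invoking Benoist's theorem from \cite{BenPAGLII} that for Zariski dense $\Xi$ the additive subgroup of $\liea$ generated by $\lambda(\Xi)$ is dense, I would augment the collection by further loxodromic elements $h_{s+1}, \ldots, h_t \in \Xi$ until the additive subgroup generated by $\lambda(h_0), \ldots, \lambda(h_t)$ becomes $\delta$-dense in $\liea$; adding generators only enlarges the cone, so the interior condition on $X_0$ is preserved. \emph{Third}, to achieve general position I would conjugate each $h_i$ by a suitable $\gamma_i \in \Xi$, setting $g_i := \gamma_i h_i \gamma_i^{-1}$. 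Conjugation preserves the Jordan projection, so (i) and (ii) are untouched, while the fixed points transform as $(g_i)_\pm = \gamma_i \cdot (h_i)_\pm$. Since $\Xi$ is Zariski dense and the condition that all ordered pairs $((g_i)_-,(g_j)_+)$ be transverse (in each $\mathbb{P}(V_\alpha)$) is Zariski open and non-empty in $\Xi^{t+1}$, the $\gamma_i$'s can be chosen simultaneously so that a uniform bound $r > 0$ holds in (1). \emph{Fourth}, given Steps 1--3, the existence of a shift $A \in \liea$ making $\sum_i \mathbb{N}\lambda(g_i)$ be $\delta$-dense in $A + \calc$ is a standard fact about semigroups in cones: when vectors span a cone of non-empty interior and their integer combinations are $\delta$-dense in the ambient space, a finite correction vector $A$ pushes the positive semigroup far enough inside the cone that $\delta$-density propagates throughout $A + \calc$.

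The main obstacle will not be any single ingredient---each is either explicitly due to Benoist or essentially routine---but rather the coordination of (i), (ii), and (iii). The key observation enabling this coordination is that conjugation in $\Xi$ preserves the Jordan projection: this lets me settle the cone and density data first (Steps 1 and 2), and only then handle the delicate general-position condition (Step 3), without risk of undoing what has been achieved.
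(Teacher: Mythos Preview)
The paper does not give its own proof of this lemma: it is stated as a result of Benoist with a bare citation to \cite{BenPAGL,BenPAGLII} and then used as a black box in the proof of Proposition~\ref{prop: directions interior to limit cone approached by mu(Gamma)}. Your outline is a correct reconstruction of how the statement is assembled from Benoist's theorems---density of the directions of $\lambda(\Xi)$ in $\conexi$ for Step~1, density in $\liea$ of the additive group generated by $\lambda(\Xi)$ for Step~2, and Zariski density of $\Xi^{t+1}$ in $\g^{t+1}$ together with the Zariski-openness of transversality for Step~3. The conjugation trick, exploiting that $\lambda$ is a class function, is precisely the right device to decouple the general-position requirement from the cone and density data.

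The only place that would benefit from being written out rather than declared ``standard'' is Step~4. A clean argument: since $\sum_i \zz\,\lambda(g_i)$ is $\delta$-dense in $\liea$, choose a finite subset $F$ of this group that is $\delta$-dense in the compact parallelotope $P=\sum_i [0,1]\lambda(g_i)$; write each $f\in F$ as $\sum_i m_i^f\lambda(g_i)$ and let $N$ be an integer bounding all $|m_i^f|$; set $A=N\sum_i\lambda(g_i)$. For $x\in A+\calc$, write $x=\sum_i a_i\lambda(g_i)$ with $a_i\geq N$, split off the fractional parts to land in $P$, and correct by some $f\in F$: the resulting coefficients $\lfloor a_i\rfloor + m_i^f$ are nonnegative, giving a point of $\sum_i\mathbb{N}\lambda(g_i)$ within $\delta$ of $x$. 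With this paragraph added, your proposal is a complete proof.
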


We have the following consequence for Anosov representations, which refines Proposition \ref{prop: limit set for anosov with cartan attractors} as it allows us to control not only where Cartan attractors and repellors are located, but also the corresponding Cartan projection. We let $d(\cdot,\cdot)$ be the distance on $\liea$ induced by the Killing form of $\lieg$ and $\cone:=\mathcal{L}_{\rho(\Gamma)}$.

\begin{prop}\label{prop: directions interior to limit cone approached by mu(Gamma)}
Let $\rho:\Gamma\to\g$ be a Zariski dense $\Delta$-Anosov representation.
Fix vectors $Y\in\liea$ and $X_0\in\tn{int}(\cone)$, and a pair of limit points $\xi_\pm\in\xi_\rho(\bg)$.
Then for every $\delta>0$ there is an element $\gamma\in\Gamma$ such that $$d_\alpha(\Lambda_\alpha(S(\rho(\gamma))),\Lambda_\alpha(\xi_-))<\delta \tn{ and } d_\alpha(\Lambda_\alpha(U(\rho(\gamma))),\Lambda_\alpha(\xi_+))<\delta$$\noindent  for every $\alpha\in\Delta$, and $d(\mu(\rho(\gamma)),Y+(\rr_{\geq 0}) X_0)<\delta$.
\end{prop}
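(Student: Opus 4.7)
The plan is to combine Benoist's Lemma \ref{lem: for prop directions interior to limit cone approached by mu(Gamma)} with the estimates for products of proximal elements from Lemma \ref{lem: product of repsilon loxodromic}, in the spirit of Benoist \cite{BenPAGLII}. Concretely, I would construct $\gamma$ as a cyclic product
\[\gamma=\eta_+^{N_+}\gamma_0^{n_0}\gamma_1^{n_1}\cdots\gamma_t^{n_t}\eta_-^{N_-}\in\Gamma,\]
where $\eta_+,\eta_-\in\gh$ are chosen so that $\xi_\rho((\eta_+)_+)$ and $\xi_\rho((\eta_-)_-)$ approximate $\xi_+$ and $\xi_-$ respectively, while the $\gamma_i$ are preimages under $\rho$ of loxodromic elements $g_i$ produced by Lemma \ref{lem: for prop directions interior to limit cone approached by mu(Gamma)} (lifted via the finite-kernel map $\rho$). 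The cyclic product will then be loxodromic with attracting/repelling fixed points close to $(h_+)_+$ and $(h_-)_-$ (where $h_\pm:=\rho(\eta_\pm)^{N_\pm}$), and with Cartan projection close to a prescribed vector of the form $Y+sX_0$.

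Write $\xi_\pm=\xi_\rho(z_\pm)$ with $z_\pm\in\bg$. First, using Gromov's density of $\{(\gamma_-,\gamma_+):\gamma\in\gh\}$ in $\bgc$ together with continuity of $\xi_\rho$, I would pick $\eta_+,\eta_-\in\gh$ so that $\rho(\eta_\pm)_\pm$ are close to $\xi_\pm$ in every $d_\alpha$. Next, I would apply Lemma \ref{lem: for prop directions interior to limit cone approached by mu(Gamma)} to the Zariski dense subgroup $\rho(\Gamma)$ with input $X_0\in\tn{int}(\cone)$, producing $r>0$, $A\in\liea$, a cone $\calc$ with $X_0\in\tn{int}(\calc)$, and loxodromic elements $g_0,\dots,g_t\in\rho(\Gamma)$ with pairwise $r$-transverse attracting/repelling points, such that $\bbN\lambda(g_0)+\cdots+\bbN\lambda(g_t)$ is dense in $A+\calc$ up to a small error. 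By enlarging $N_\pm$ (Lemma \ref{lem: anosov and repsilon proximality}) and, if necessary, replacing the $g_i$ with a family of large powers of a slightly perturbed collection, I would arrange that $h_+,g_0,\dots,g_t,h_-$ form a cyclic tuple of $(r,\varepsilon)$-loxodromic elements satisfying the $6r$-transversality hypothesis of Lemma \ref{lem: product of repsilon loxodromic} for a prescribed small $\varepsilon$.

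The key point now is that the ``bracket'' vector $B:=[h_+,g_0,\dots,g_t,h_-]\in\liea$ appearing in Lemma \ref{lem: product of repsilon loxodromic} depends only on the attracting and repelling flags and not on the chosen exponents $n_i$ (Remark \ref{rem: cross ratio depends only on flags}). Since $X_0\in\tn{int}(\calc)$, for all sufficiently large $s\geq0$ the vector
\[Y+sX_0-\lambda(h_+)-\lambda(h_-)+B-A\]
lies in $\calc$, and by the density property of Benoist's lemma I can select non-negative integers $n_0,\dots,n_t$ so that $\sum_i n_i\lambda(g_i)$ approximates $Y+sX_0-\lambda(h_+)-\lambda(h_-)+B$ with error less than $\delta/3$. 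Lemma \ref{lem: product of repsilon loxodromic} then guarantees that $\rho(\gamma)$ is loxodromic with $\rho(\gamma)_+$ and $\rho(\gamma)_-$ within $\delta/2$ (in every $d_\alpha$, $d_\alpha^-$) of $(h_+)_+\approx\xi_+$ and $(h_-)_-\approx\xi_-$ respectively, and that
\[\bigl\Vert\mu(\rho(\gamma))-\bigl(\lambda(h_+)+\textstyle\sum_in_i\lambda(g_i)+\lambda(h_-)\bigr)+B\bigr\Vert<\delta/3;\]
combining with the previous estimate yields $d(\mu(\rho(\gamma)),Y+sX_0)<\delta$. Finally, since $|\gamma|$ can be made arbitrarily large by increasing $s$ and the $n_i$, Proposition \ref{prop: limit set for anosov with cartan attractors} (applied to $\rho$ and its contragredient) forces $U(\rho(\gamma))$ and $S(\rho(\gamma))$ to be within $\delta/2$ of $\rho(\gamma)_\pm$, completing the proof.

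The main technical obstacle is the bookkeeping of constants: the parameters $r,\varepsilon,N_\pm,s,n_i$ must be selected in the correct order so that the various error terms from Lemmas \ref{lem: anosov and repsilon proximality}, \ref{lem: product of repsilon loxodromic} and Proposition \ref{prop: limit set for anosov with cartan attractors} all sum to less than $\delta$. The fact that $B$ depends only on the fixed flags (Remark \ref{rem: cross ratio depends only on flags}) is crucial, since it lets us fix the collection of flags (and hence $B$) before choosing the exponents that control $\mu$. A minor additional care is needed to ensure the cyclic transversality condition closes up, i.e. that $(h_+)_+$ is far from $(h_-)_-$; this can always be arranged since $\xi_+\neq\xi_-$ (or, if they happen to be equal, by perturbing $\eta_-$ to make $(h_-)_-$ close to but distinct from $\xi_+$, using the fact that $\Lambda_\rho^\Delta$ is a perfect set by minimality).
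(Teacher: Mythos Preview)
Your proof is correct and follows essentially the same strategy as the paper: apply Benoist's Lemma~\ref{lem: for prop directions interior to limit cone approached by mu(Gamma)} to obtain the cone generators $g_0,\dots,g_t$, sandwich their powers between loxodromic elements whose fixed points approximate $\xi_\pm$, invoke Lemma~\ref{lem: product of repsilon loxodromic} together with Remark~\ref{rem: cross ratio depends only on flags} to freeze the bracket term, and finally choose the inner exponents using the $\delta$-density of $\sum n_i\lambda(g_i)$ in $A+\calc$.

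The one difference worth noting is that the paper uses a \emph{single} element $\widehat{\gamma}\in\gh$ with $\rho(\widehat{\gamma})_+\approx\xi_+$ and $\rho(\widehat{\gamma})_-\approx\xi_-$ simultaneously (found via density of $\{(\gamma_-,\gamma_+)\}$ in $\bgc$, after slightly perturbing $\xi_-$ to ensure transversality to $\xi_+$), and forms $\gamma=\widehat{\gamma}^N\gamma_0^{n_0}\cdots\gamma_t^{n_t}\widehat{\gamma}^N$. This buys a small simplification over your two-element version $\eta_+^{N_+}\cdots\eta_-^{N_-}$: the cyclic transversality in Lemma~\ref{lem: product of repsilon loxodromic} closes up automatically (since $\widehat{\gamma}_+$ is transverse to $\widehat{\gamma}_-$), and one only has to arrange genericity of the \emph{pair} $(\widehat{\gamma}_-,\widehat{\gamma}_+)$ against the $(g_i)_\pm$, which the paper does by choosing $\widehat{\gamma}$ \emph{after} the $g_i$ rather than perturbing the $g_i$ as you suggest. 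Your version works too, but you should pick $\eta_\pm$ after the $g_i$ (so that all four flags $\rho(\eta_\pm)_\pm$ avoid the finitely many $(g_i)_\pm$) rather than perturb the $g_i$; conjugating the $g_i$ is fine in principle but adds an unnecessary layer.
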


\begin{proof}

Let $g_0=\rho(\gamma_0),\dots,g_t=\rho(\gamma_t)\in\rho(\Gamma)$, $A\in\liea$ and $\calc$ be as in Lemma \ref{lem: for prop directions interior to limit cone approached by mu(Gamma)}, so in particular $X_0\in\tn{int}(\calc)$.

After perturbing slightly $\xi_-$ if necessary we may assume that $\xi_-$ and $\xi_+$ are transverse.
Since $\{(\gamma_-,\gamma_+)\}_{\gamma\in\gh}$ is dense in $\bgc$, we may take $\widehat{\gamma}\in\gh$ so that \begin{equation} \label{eq: in prop directions interior to limit cone approached by mu(Gamma)}
d_\alpha(\Lambda_\alpha(\rho(\widehat{\gamma})_\pm),\Lambda_\alpha(\xi_\pm))<\delta
\end{equation} \noindent for all $\alpha\in\Delta$.
We may also assume that $$\widehat{\gamma}_-,\widehat{\gamma}_+,(\gamma_0)_-,(\gamma_0)_+,\dots,(\gamma_t)_-,(\gamma_t)_+$$ \noindent are pairwise distinct.
Hence, there exists some $r>0$ so that for every $\alpha\in\Delta$ we have $$ d_\alpha(\Lambda_\alpha^-(\rho(\gamma)_-),\Lambda_\alpha(\rho(\gamma')_+))\geq 6r $$ \noindent for every $\gamma,\gamma'\in \{\widehat{\gamma},\widehat{\gamma}^{-1},\gamma_0,\gamma_0^{-1},\dots,\gamma_t,\gamma_t^{-1}\}$ with $\gamma'\neq\gamma^{-1}$.

Let $\varepsilon$ be as in Lemma \ref{lem: product of repsilon loxodromic} (for this $r$ and our fixed $\delta$).
By Lemma \ref{lem: anosov and repsilon proximality}, there exists some $N>0$ so that for all $n,n_0,\dots,n_t\geq N$ the elements $\rho(\widehat{\gamma}^n),\rho(\gamma_0^{n_0}),\dots,\rho(\gamma_t^{n_t})$ are $(r,\varepsilon)$-loxodromic.
If we write $\gamma=\gamma_{n,n_0,\dots,n_t}:=\widehat{\gamma}^n\gamma_0^{n_0}\dots\gamma_t^{n_t}\widehat{\gamma}^n$,
Lemma \ref{lem: product of repsilon loxodromic} and Equation (\ref{eq: in prop directions interior to limit cone approached by mu(Gamma)}) imply $$d_\alpha(\Lambda_\alpha(\rho(\gamma)_\pm),\Lambda_\alpha(\xi_\pm))<2\delta$$ \noindent for all $\alpha\in\Delta$ and all $n,n_0,\dots,n_t\geq N$.
\noindent Moreover, by Proposition \ref{prop: limit set for anosov with cartan attractors} we may assume that $N$ is large enough, so that $$d_\alpha(\Lambda_\alpha(U(\rho(\gamma))),\Lambda_\alpha(\xi_+))\leq 3\delta \tn{ and } d_\alpha(\Lambda_\alpha(S(\rho(\gamma)),\Lambda_\alpha(\xi_-))\leq 3\delta$$ \noindent holds for every $n,n_0,\dots,n_t\geq N$ and every $\alpha\in\Delta$.
To finish the proof we only have to show that we may pick the exponents $n, n_0,\dots,n_t\geq N$ in such a way that the condition $d(\mu(\rho(\gamma)),Y+(\rr_{\geq 0})X_0)<\delta$ is also satisfied.

First of all, we fix $n:=N$.
Set $$A':=2\lambda(\rho(\widehat{\gamma}^N))-[\rho(\widehat{\gamma}^N),\rho(\gamma_0)^{n_0},\dots,\rho(\gamma_t)^{n_t},\rho(\widehat{\gamma})],$$ \noindent which by Remark \ref{rem: cross ratio depends only on flags} is independent of $n_0,\dots,n_t$.
By Lemma \ref{lem: product of repsilon loxodromic} we have \begin{equation}\label{eq: in prop directions interior to limit cone approached by mu(Gamma) I}
\Vert \mu(\rho(\gamma))-A'-n_0\lambda(\rho(\gamma_0))-\dots-n_t\lambda(\rho(\gamma_{t}))\Vert<\delta
\end{equation}
\noindent for all $n_0,\dots,n_t\geq N$.

On the other hand, by Lemma \ref{lem: for prop directions interior to limit cone approached by mu(Gamma)} the set $$\{n_0\lambda(\rho(\gamma_0))+\dots+n_t\lambda(\rho(\gamma_{t}))\}_{n_j\geq 0}$$ \noindent is $\delta$-dense in $A+\calc$.
Hence, there is a compact subset $K\subset\liea$ so that $$\{n_0\lambda(\rho(\gamma_0))+\dots+n_t\lambda(\rho(\gamma_{t}))\}_{n_j\geq N}$$ \noindent is $\delta$-dense in $(A+\calc)\cap K^c$.
Now since $X_0\in\tn{int}(\calc)$, there is some positive $c$ such that $-A'+Y+cX_0\in( A+\calc)\cap K^c$.
We then find some $n_0,\dots,n_t\geq N$ so that $$\Vert  n_0\lambda(\rho(\gamma_0))+\dots+n_t\lambda(\rho(\gamma_{t})) -(-A'+Y+cX_0) \Vert<\delta.$$
\noindent By Equation (\ref{eq: in prop directions interior to limit cone approached by mu(Gamma) I}) we conclude $$\Vert \mu(\rho(\gamma))-Y-cX_0\Vert<2\delta.$$
\end{proof}

\subsection{Examples}\label{subsec: examples of anosov reps}

As already mentioned, the first examples of Anosov representations are convex co-compact representations into rank one Lie groups.
In the following we focus in some higher rank examples (other examples will be discussed in Subsection \ref{subsec: exampples of dods}).

\begin{ex}[Hitchin representations]\label{ex: hitchin}
For a split real Lie group $\g$ we denote by $\Lambda_\g:\PSL_2(\rr)\to\g$ the unique (up to conjugation) principal embedding, see Kostant \cite{Kos}.
For instance, when $\g=\PSL_d(\rr)$, then $\Lambda_\g$ is the unique irreducible representation of $\PSL_2(\rr)$.
A \textit{Hitchin representation} is a deformation of $$\rho:\pi_1(S)\to\PSL_2(\rr)\to\g,$$ \noindent where $S$ is a closed orientable surface of negative Euler characteristic, the first arrow is the holonomy representation corresponding to a hyperbolization of $S$, and the second arrow is given by composition with $\Lambda_\g$.
Labourie \cite{Lab} showed that Hitchin representations are $\Delta$-Anosov.
\end{ex}

\begin{ex}[Benoist representations]\label{ex: divisible convex}
Let $\g=\PSL(V)$ where $V$ is a real vector space as in Example \ref{ex: flags sld}.
A \textit{Benoist representation} is a faithful and discrete representation $\rho:\Gamma\to\g$ so that the image $\rho(\Gamma)$ acts properly and co-compactly on an open strictly convex subset $\mathcal{C}_\rho$ of the projective space $\mathbb{P}(V)$.
These are $\{\alpha_{1},\alpha_{d-1}\}$-Anosov and form connected components of the corresponding character variety \cite{Koszul,BenoistDivIII}.
They were extensively studied by Benoist \cite{BenoistDivI,BenoistDivII,BenoistDivIII,BenoistDivIV}.
\end{ex}



\section{Domains of discontinuity}\label{sec: dod}

We are now ready to prove our main result (Theorem \ref{thm: dod for anosov} below).
Let $\rho:\Gamma\to\g$ be a discrete representation.
An open $\Gamma$-invariant subset $\Omega\subset\x$ is a \textit{domain of discontinuity} for $\rho$ if for every compact subset $K\subset\Omega$ one has $$\#\{ \gamma\in\Gamma:\hspace{0,3cm} \rho(\gamma)\cdot K\cap K\neq\emptyset\} <\infty.$$
\noindent In this case, the action $\Gamma\curvearrowright\Omega$ is said to be \textit{properly discontinuous}.

We have the following alternative characterization.
Let $\Omega\subset\x$ be an open $\Gamma$-invariant subset.
Two points $x$ and $x'$ in $\Omega$ are said to be \textit{dynamically related} under $\rho(\Gamma)$ if there exist sequences $\{x_p\}\subset\Omega$ and $\{\gamma_p\}\subset\Gamma$ with $\gamma_p\to\infty$ such that
\[x_p \to x \quad \text{and} \quad \rho(\gamma_p)\cdot x_p\to x'.\]
\noindent In this case we say $x$ and $x'$ are dynamically related \textit{via} the sequence $\{\rho(\gamma_p)\}_{p\geq 0}$.

The following lemma is direct from definitions.

\begin{lem}\label{lem: prop discont vs dyn rels}
Let $\rho:\Gamma\to\g$ be a discrete representation with infinite image, and $\Omega\subset\x$ be an open $\Gamma$-invariant subset.
Then $\Omega$ is a domain of discontinuity for $\rho$ if and only if no two points in $\Omega$ are dynamically related under $\rho(\Gamma)$.
\end{lem}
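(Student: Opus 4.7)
The plan is to prove the two implications separately, each by contradiction, by converting between the failure of proper discontinuity (an accumulation of group elements that move a compact set into itself) and the existence of a dynamical relation (a pair of accumulation points of such a sequence applied to a convergent sequence in $\Omega$). Both directions are elementary once the natural subsequences are extracted.

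For the forward direction, I would assume that $\Omega$ is a domain of discontinuity but that some pair $x,x'\in\Omega$ is dynamically related via sequences $x_p\to x$, $\rho(\gamma_p)\cdot x_p\to x'$ with $\gamma_p\to\infty$ in $\Gamma$. Using that $\Omega$ is open and $\x$ is locally compact Hausdorff, I pick compact neighborhoods $K_x\subset\Omega$ of $x$ and $K_{x'}\subset\Omega$ of $x'$, and set $K:=K_x\cup K_{x'}$, which is compact and contained in $\Omega$. For all sufficiently large $p$, $x_p\in K_x\subset K$ and $\rho(\gamma_p)\cdot x_p\in K_{x'}\subset K$, so $\rho(\gamma_p)\cdot K\cap K\neq\emptyset$. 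Because $\gamma_p\to\infty$, the sequence $\{\gamma_p\}$ contains infinitely many distinct elements of $\Gamma$, contradicting the finiteness in the definition of a domain of discontinuity.

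For the reverse direction, I would assume that no two points of $\Omega$ are dynamically related but that $\Omega$ is not a domain of discontinuity. Then there exist a compact $K\subset\Omega$ and an infinite sequence of pairwise distinct $\gamma_p\in\Gamma$ with $\rho(\gamma_p)\cdot K\cap K\neq\emptyset$, which allows me to pick $x_p\in K$ with $\rho(\gamma_p)\cdot x_p\in K$. Since $K$ is compact, after passing to a subsequence I may assume $x_p\to x\in K$ and $\rho(\gamma_p)\cdot x_p\to x'\in K$. Since $\Gamma$ is discrete (in the sense that "$\to\infty$" means leaving every finite set) and the $\gamma_p$ are distinct, we have $\gamma_p\to\infty$ in $\Gamma$, so $x$ and $x'$ are dynamically related, a contradiction.

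There is no serious obstacle here; the only bookkeeping issue is confirming the equivalence between "$\gamma_p\to\infty$ in $\Gamma$" (as used in the definition of dynamical relation) and "infinitely many distinct $\gamma_p$ in $\Gamma$" (as produced by the failure of the counting condition), which is immediate by extracting a subsequence. The hypothesis that $\rho$ has infinite image is used only implicitly to rule out the trivial situation in which the lemma would be vacuous.
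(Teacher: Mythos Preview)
Your proof is correct and is exactly the standard argument the paper has in mind; the paper does not actually write out a proof, stating only that the lemma ``is direct from definitions.'' Your two contradiction arguments, using local compactness of $\x$ to produce compact neighborhoods in one direction and sequential compactness of $K$ in the other, are precisely the details one fills in.
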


\subsection{Statement and proof of the result}\label{subsec statement domains of discontinuity}

Let $\p_\theta$ be a self-opposite parabolic subgroup of $\g$, associated to some non-empty subset $\theta\subset\Delta$.
Further, assume that $\p_\theta\backslash\g/\h$ is finite.
We have the following key step, inspired from Kapovich-Leeb-Porti \cite[Proposition 6.2]{KLPdomains}.

\begin{lem}\label{lem: for domains of discontinuity}
Let $\rho:\Gamma\to\g$ be a $\theta$-Anosov representation with limit map $\xi_{\rho,\theta}:\bg\to\f_\theta$.
Let $\lbrace\gamma_p\rbrace_{p\geq 0}$ be a sequence in $\Gamma$ going to infinity and suppose that, as $p\to\infty$, $$U_\theta(\rho(\gamma_p))\to\xi_+ \tn{ and } S_\theta(\rho(\gamma_p))\to\xi_-$$
\noindent for some flags $\xi_{\pm}\in\xi_{\rho,\theta}(\bg)$.
Let $x$ and $x'$ be two points in $\x$ which are dynamically related via $\{\rho(\gamma_p)\}$.
Then for every $\bfp\in\p_\theta\backslash\g/\h$ satisfying $\bfp\leftrightarrow \pos(\xi_-,x)$, one has $\pos(\xi_+,x')\leq\bfp$.
\end{lem}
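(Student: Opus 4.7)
The plan is to upgrade the dynamical relation between $x$ and $x'$ to a dynamical relation between pairs in $\f_\theta\times\x$ that all sit on a single $\g$-orbit, and then read the conclusion off from the very definition of the Bruhat order. First I would unpack the hypothesis $\bfp\leftrightarrow\pos(\xi_-,x)$: by definition there exist transverse flags $\eta_1,\eta_2$ and a point $y\in\x$ with $\pos(\eta_1,y)=\bfp$ and $\pos(\eta_2,y)=\pos(\xi_-,x)$. Since $(\eta_2,y)$ and $(\xi_-,x)$ therefore lie in the same $\g$-orbit, an element $g\in\g$ takes the former to the latter, and after replacing $\eta_1$ by $g\cdot\eta_1$ we obtain $\eta\in\f_\theta$ \emph{transverse to $\xi_-$} and with $\pos(\eta,x)=\bfp$.

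The main technical step is to approximate, along the sequence $x_p\to x$, the pair $(\eta,x)$ by pairs having exactly the relative position $\bfp$. Let $A\coloneqq\g\cdot(\eta,x)\subset\f_\theta\times\x$ be the $\g$-orbit consisting of all pairs with position $\bfp$. The projection $\pi\colon A\to\x$ is $\g$-equivariant and, since $\x$ is itself $\g$-homogeneous, surjective; as a $\g$-equivariant surjection between homogeneous $\g$-spaces it is a submersion, and in particular admits a local section through $(\eta,x)$. Such a section lifts $x_p\to x$ to a sequence $(\eta_p,x_p)\in A$, so that $\eta_p\to\eta$ and $\pos(\eta_p,x_p)=\bfp$ for all large $p$.

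Next I would push forward by $\rho(\gamma_p)$. Since $\eta$ is transverse to $\xi_-$ and $S_\theta(\rho(\gamma_p))\to\xi_-$, for large $p$ the flag $\eta_p$ lies in $B_\varepsilon(S_\theta(\rho(\gamma_p)))$ for any fixed $\varepsilon>0$. The Anosov inequality (\ref{eq def anosov}) together with $|\gamma_p|\to\infty$ forces $\min_{\alpha\in\theta}\alpha(\mu(\rho(\gamma_p)))\to\infty$, so Remark \ref{rem: dynamics g cartan attractor and repellor} places $\rho(\gamma_p)\cdot\eta_p$ eventually inside $b_\varepsilon(U_\theta(\rho(\gamma_p)))$. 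Letting $\varepsilon\to 0$ and using $U_\theta(\rho(\gamma_p))\to\xi_+$ yields $\rho(\gamma_p)\cdot\eta_p\to\xi_+$. Combining with $\rho(\gamma_p)\cdot x_p\to x'$, the pair $(\rho(\gamma_p)\cdot\eta_p,\rho(\gamma_p)\cdot x_p)$ has constant relative position $\bfp$ and converges to $(\xi_+,x')$, whence by the very definition of the Bruhat order $\pos(\xi_+,x')\leq\bfp$.

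The main obstacle is the perturbation step: one must hit position \emph{exactly} $\bfp$ along the approximating sequence, not merely a position comparable to $\bfp$. A naive variant that keeps $\eta$ fixed and extracts a subsequence on which $\pos(\eta,x_p)$ is constant only produces $\pos(\xi_+,x')\leq\mathbf{q}$ for some $\mathbf{q}\geq\bfp$, which is strictly weaker than the claim. The submersion argument is precisely what allows us to remain on the orbit $A$ while tracking the sequence $x_p$.
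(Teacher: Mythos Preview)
Your proof is correct and follows essentially the same approach as the paper. The only difference is in the perturbation step: rather than invoking a local section of the orbit projection $A\to\x$, the paper simply writes $x_p=g_p\cdot x$ for some sequence $g_p\to 1$ in $\g$ (using a local section of $\g\to\x$, which exists since $\x$ is homogeneous) and sets $\eta_p:=g_p\cdot\eta$; this is a more concrete realization of the same idea and avoids having to discuss the smooth structure on the orbit $A$.
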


\begin{proof}
  Since $\bfp\leftrightarrow \pos(\xi_-,x)$ we can find two transverse flags $\xi_1$ and $\xi_2$ in $\f_\theta$ and a point $\widetilde{x}\in\x$ such that
  \[\pos(\xi_1,\widetilde{x})=\bfp\tn{ and }\pos(\xi_2,\widetilde{x})=\pos(\xi_-,x).\]
  Since the action of $\g$ on a fixed relative position is transitive, we find an element $g\in\g$ such that $g\cdot(\xi_-,x)=(\xi_2,\widetilde x)$.
  The element $\xi':=g^{-1}\cdot \xi_1$ is therefore transverse to $\xi_-$ and satisfies $\pos(\xi',x)=\bfp$.

  Take a sequence $x_p\to x$ in $\x$ such that $\rho(\gamma_p)\cdot x_p\to x'$ and write $x_p=g_p\cdot x$, for some sequence $g_p\to 1$ in $\g$.
  As $\xi'$ is transverse to $\xi_-$, so is $g_p\cdot\xi'$ for large enough $p$.
  By Remark \ref{rem: dynamics g cartan attractor and repellor} and Equation (\ref{eq def anosov}) we conclude
  \[\rho(\gamma_p) g_p\cdot\xi'\to \xi_+.\]
  Hence
  \[\pos(\xi_+,x')\leq\pos(\rho(\gamma_p) g_p\cdot\xi',\rho(\gamma_p) g_p\cdot x)=\pos(\xi',x)=\bfp.\qedhere\]
\end{proof}

Given an ideal $\bfi\subset\p_\theta\backslash\g/\h$ we define \begin{equation}\label{eq domain for ideal I}
\bfo_\rho^\bfi:=\x\setminus\displaystyle\bigcup_{z\in\bg}\lbrace x\in\x: \pos(\xi_{\rho,\theta}(z),x)\in\bfi \rbrace.
\end{equation}
\noindent Note that $\bfo_\rho^\bfi$ is $\Gamma$-invariant and open (but it could be empty).

\begin{thm}\label{thm: dod for anosov}
  Let $\theta\subset\Delta$ be non-empty and so that $\iota(\theta)=\theta$, and $\rho:\Gamma\to\g$ be a $\theta$-Anosov representation.
  Suppose that $\p_\theta\backslash\g/\h$ is finite and let $\bfi \subset \p_\theta\backslash\g/\h$. 
  Further, assume one of the following holds:
\begin{enumerate}
\item $\bfi$ is a fat ideal,
\item or $\tau(\h)=\h$ for a Cartan involution $\tau$ of $\g$, and $\bfi$ is a $w_0$-fat ideal.
\end{enumerate}
Then $\bfo_\rho^\bfi$ is a domain of discontinuity for $\rho$.
\end{thm}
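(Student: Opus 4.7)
The plan is to argue by contradiction via Lemma \ref{lem: prop discont vs dyn rels}: suppose $x,x'\in\bfo_\rho^\bfi$ are dynamically related under a sequence $\{\rho(\gamma_p)\}$ with $\gamma_p\to\infty$. Passing to a subsequence we may extract limits $U_\theta(\rho(\gamma_p))\to\xi_+$ and $S_\theta(\rho(\gamma_p))\to\xi_-$; using $\theta=\iota(\theta)$ to identify $\f_\theta^-$ with $\f_\theta$, both flags lie in $\Lambda_\rho^\theta=\xi_{\rho,\theta}(\bg)$ by Proposition \ref{prop: limit set for anosov with cartan attractors} applied to $\gamma_p$ and $\gamma_p^{-1}$. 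By (\ref{eq domain for ideal I}), the hypothesis $x,x'\in\bfo_\rho^\bfi$ then forces $\pos(\xi_-,x)\notin\bfi$ and $\pos(\xi_+,x')\notin\bfi$.

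In case (1), fatness of $\bfi$ furnishes $\bfp'\in\bfi$ with $\bfp'\leftrightarrow\pos(\xi_-,x)$. Lemma \ref{lem: for domains of discontinuity} yields $\pos(\xi_+,x')\leq\bfp'$, and downward closure of $\bfi$ then forces $\pos(\xi_+,x')\in\bfi$, a contradiction.

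Case (2) is more subtle because $w_0$-fatness only constrains minimal positions. The strategy is first to establish the rigidity $\pos(\xi_+,x')=w_0\cdot\pos(\xi_-,x)$ and then to force $\pos(\xi_-,x)$ to be minimal. For the rigidity, Lemma \ref{lem: for domains of discontinuity} applied with $\bfp=w_0\cdot\pos(\xi_-,x)$ (which is $\leftrightarrow\pos(\xi_-,x)$ by Corollary \ref{cor: w0 action and transversely related}) gives $\pos(\xi_+,x')\leq w_0\cdot\pos(\xi_-,x)$. Applying the same lemma to the inverted sequence $\{\rho(\gamma_p)^{-1}\}$ --- under which $x'$ and $x$ are dynamically related with the Cartan attractor and repellor swapped --- gives $\pos(\xi_-,x)\leq w_0\cdot\pos(\xi_+,x')$; applying the order preserving involution of Proposition \ref{prop: action of w0} produces the reverse inequality.

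For the minimality, pick any minimal $\bfp_{\min}\leq\pos(\xi_-,x)$. Then $w_0\cdot\bfp_{\min}\leq w_0\cdot\pos(\xi_-,x)=\pos(\xi_+,x')$, and Corollaries \ref{cor: w0 action and transversely related} and \ref{cor: increasing position keeps the relation} combine to give $\bfp_{\min}\leftrightarrow\pos(\xi_+,x')$. Feeding this into Lemma \ref{lem: for domains of discontinuity} for the inverted sequence yields $\pos(\xi_-,x)\leq\bfp_{\min}$, so $\bfp_{\min}=\pos(\xi_-,x)$ and $\pos(\xi_-,x)$ is itself minimal. The $w_0$-fatness hypothesis applied to $\pos(\xi_-,x)\notin\bfi$ then gives $\pos(\xi_+,x')=w_0\cdot\pos(\xi_-,x)\in\bfi$, contradicting $x'\in\bfo_\rho^\bfi$. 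I expect the main technical step to be this symmetric use of Lemma \ref{lem: for domains of discontinuity} on both $\gamma_p$ and $\gamma_p^{-1}$: it is what promotes the $w_0$-fatness hypothesis --- which a priori only controls minimal positions --- into an obstruction valid for arbitrary $\pos(\xi_-,x)\notin\bfi$.
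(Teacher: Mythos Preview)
Your proposal is correct and follows essentially the same approach as the paper: both argue by contradiction via Lemma~\ref{lem: prop discont vs dyn rels}, extract $\xi_\pm\in\Lambda_\rho^\theta$ using Proposition~\ref{prop: limit set for anosov with cartan attractors}, handle case~(1) identically, and in case~(2) exploit Lemma~\ref{lem: for domains of discontinuity} symmetrically for $\{\gamma_p\}$ and $\{\gamma_p^{-1}\}$ together with Corollaries~\ref{cor: increasing position keeps the relation}, \ref{cor: w0 action and transversely related} and Proposition~\ref{prop: action of w0}. The only difference is organizational: the paper first shows $\bfp_+$ (and then $\bfp_-$) is minimal by taking $\bfp_{\min}\leq\bfp_-$ and deducing $\bfp_+\leq w_0\cdot\bfp_{\min}$ directly, whereas you first establish the equality $\bfp_+=w_0\cdot\bfp_-$ and then derive minimality; both routes use the same ingredients and are equally valid.
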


\begin{proof}
  In either case, suppose by contradiction that $x$ and $x'$ are two points in $\bfo_\rho^\bfi$ which are dynamically related via some sequence $\{\rho(\gamma_p)\}$, with $\gamma_p\to\infty$.
  By Proposition \ref{prop: limit set for anosov with cartan attractors} we may assume
  \[U_\theta(\rho(\gamma_p))\to\xi_+ \tn{ and } S_\theta(\rho(\gamma_p))\to\xi_-\]
  for some flags $\xi_{\pm}\in\xi_{\rho,\theta}(\bg)$.
  We denote $\bfp_- \coloneqq \pos(\xi_-,x)$ and $\bfp_+ \coloneqq \pos(\xi_+,x')$.
  Then both $\bfp_- \not\in \bfi$ and $\bfp_+ \not\in \bfi$.
  Lemma \ref{lem: for domains of discontinuity} states that $\bfp \leftrightarrow \bfp_-$ implies $\bfp_+ \leq \bfp$ for any relative position $\bfp$.
  We proceed differently in the two cases.
\begin{enumerate}
\item 
  Since $\bfi$ is fat and $\bfp_- \not\in \bfi$, we find an element $\bfp\in\bfi$ such that $\bfp\leftrightarrow \bfp_-$. By Lemma \ref{lem: for domains of discontinuity} we have $\bfp_+ \leq\bfp$, and therefore $\bfp_+ \in \bfi$. This is the desired contradiction.
\item
We claim that $\bfp_+$ is minimal.
Indeed, let $\bfp_{\min}\leq \bfp_-$ be a minimal position.
Then $w_0 \cdot \bfp_{\mathrm{min}} \leftrightarrow \bfp_{\mathrm{min}}$ by Corollary \ref{cor: w0 action and transversely related} and $w_0 \cdot \bfp_{\mathrm{min}} \leftrightarrow \bfp_-$ by Corollary \ref{cor: increasing position keeps the relation}.
Then Lemma \ref{lem: for domains of discontinuity} implies $\bfp_+ \leq w_0\cdot\bfp_{\min}$.
Since by Proposition \ref{prop: action of w0} we know that $w_0\cdot\bfp_{\min}$ is minimal, the claim follows.
By an analogous argument with $\gamma_p$ replaced by $\gamma_p^{-1}$ we find that $\bfp_-$ is also minimal.

Knowing that $\bfp_\pm$ are minimal we can obtain a contradiction.
Indeed, applying Corollary \ref{cor: w0 action and transversely related},  Lemma \ref{lem: for domains of discontinuity} and Proposition \ref{prop: action of w0}, we conclude $\bfp_- =w_0\cdot\bfp_+$.
But this is impossible as $\bfp_\pm \not\in \bfi$ and $\bfi$ is $w_0$-fat.\qedhere
\end{enumerate}
\end{proof}

We record two consequences of Theorem \ref{thm: dod for anosov}.
For a point $x\in\x$ we let $\h^x$ be its stabilizer in $\g$ and $\mathscr{M}_\theta^x$ the union of open orbits of the action $\h^x\curvearrowright\f_\theta$.
Then $\xi \in \mathscr{M}_\theta^x$ is equivalent to $\pos(\xi,x)$ being maximal.
Proposition \ref{prop: non maximal is fat} and Theorem \ref{thm: dod for anosov} imply the following.

\begin{cor}\label{cor: if limit set contained in open orbits, then dod}
Let $\p_\theta$ be a self-opposite parabolic subgroup of $\g$ so that $\p_\theta\backslash\g/\h$ is finite and contains more than one point.
Then for every $\theta$-Anosov representation $\rho:\Gamma\to\g$ the set
\[\bfo_\rho^{\Inonmax} = \{x\in\x \colon \xi_{\rho,\theta}(\bg)\subset\mathscr{M}_\theta^x\}\]
is a domain of discontinuity for $\rho$.
\end{cor}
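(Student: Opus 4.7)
The plan is to obtain this as a direct consequence of Proposition \ref{prop: non maximal is fat} combined with Theorem \ref{thm: dod for anosov}(1). Under the standing hypotheses (self-opposite parabolic, so $\iota(\theta) = \theta$, and $|\p_\theta\backslash\g/\h| > 1$), Proposition \ref{prop: non maximal is fat} shows that $\Inonmax$, the set of non-maximal relative positions, is a (non-empty) fat ideal. Applying Theorem \ref{thm: dod for anosov}(1) to this ideal then immediately yields that $\bfo_\rho^{\Inonmax}$, defined by \eqref{eq domain for ideal I}, is a domain of discontinuity for $\rho$.

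What remains is to verify that $\bfo_\rho^{\Inonmax}$ coincides with the set $\{x \in \x \colon \xi_{\rho,\theta}(\bg) \subset \mathscr{M}_\theta^x\}$ from the statement. For a fixed $x \in \x$, the $\g$-equivariant identification $\x \cong \g/\h^x$ induces a natural bijection $\h^x\backslash\f_\theta \to \p_\theta\backslash\g/\h$ which sends the $\h^x$-orbit of a flag $\xi$ to the relative position $\pos(\xi, x)$. Under this bijection, Lemma \ref{lem: maximal positions and openess} identifies open $\h^x$-orbits on $\f_\theta$ precisely with maximal relative positions. Consequently,
\[\mathscr{M}_\theta^x = \{\xi \in \f_\theta \colon \pos(\xi, x) \text{ is maximal}\},\]
and the condition $\xi_{\rho,\theta}(\bg) \subset \mathscr{M}_\theta^x$ translates to $\pos(\xi_{\rho,\theta}(z), x) \notin \Inonmax$ for every $z \in \bg$, which is by definition the condition $x \in \bfo_\rho^{\Inonmax}$.

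No substantial obstacle is anticipated, as the corollary is essentially a repackaging of the two cited results. The only point requiring care is the passage from the abstract notion of open $\h^x$-orbit (defining $\mathscr{M}_\theta^x$) to the language of maximal relative positions (defining $\Inonmax$), which is handled by the $\g$-equivariant identification above together with Lemma \ref{lem: maximal positions and openess}.
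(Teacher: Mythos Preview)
Your proposal is correct and follows essentially the same approach as the paper: the paper also deduces the corollary directly from Proposition \ref{prop: non maximal is fat} and Theorem \ref{thm: dod for anosov}, after noting (via Lemma \ref{lem: maximal positions and openess}) that $\xi \in \mathscr{M}_\theta^x$ is equivalent to $\pos(\xi,x)$ being maximal. Your write-up simply spells out the identification $\bfo_\rho^{\Inonmax} = \{x\in\x \colon \xi_{\rho,\theta}(\bg)\subset\mathscr{M}_\theta^x\}$ in a bit more detail than the paper does.
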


We also have the following, which follows from Definition \ref{dfn: fat and w0fat ideal} and Theorem \ref{thm: dod for anosov}.

\begin{cor}\label{cor: dod if unique minimal position}
  Assume that $\g$ admits a Cartan involution $\tau$ satisfying $\tau(\h) = \h$, and let $\p_\theta \subset \g$ be a self-opposite parabolic subgroup such that $\p_\theta\backslash\g/\h$ is finite.
  Let $\Imin$ be the ideal consisting of all minimal positions in $\p_\theta \backslash \g / \h$ (note that it is not necessarily a minimal $w_0$-fat ideal).
Then for every $\theta$-Anosov representation $\rho:\Gamma\to\g$ the set
\[\bfo_\rho^{\Imin} = \{x\in\x \colon \pos(\xi_{\rho,\theta}(z),x) \not\in\Imin \tn{ for all }z\in\bg\}\]
is a domain of discontinuity for $\rho$.
\end{cor}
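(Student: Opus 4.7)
The plan is to verify that the ideal $\Imin$ of all minimal relative positions satisfies the hypotheses of Theorem \ref{thm: dod for anosov}(2), and then apply that theorem directly. There are only two things to check: that $\Imin$ is indeed an ideal, and that it is $w_0$-fat in the sense of Definition \ref{dfn: fat and w0fat ideal}.

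First, I would verify that $\Imin$ is an ideal. Suppose $\bfp \in \Imin$ is minimal and $\bfp' \leq \bfp$. By minimality of $\bfp$ we must have $\bfp' = \bfp$, so $\bfp' \in \Imin$. Thus $\Imin$ is an ideal trivially.

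Next, I would check the $w_0$-fat condition, which asks that for every minimal position $\bfp_{\min} \notin \Imin$, one has $w_0 \cdot \bfp_{\min} \in \Imin$. Since by definition $\Imin$ contains \emph{all} minimal relative positions, there is no minimal $\bfp_{\min}$ outside $\Imin$, so the condition holds vacuously. Hence $\Imin$ is $w_0$-fat.

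With these two observations in hand, the hypotheses of Theorem \ref{thm: dod for anosov}(2) are satisfied (the existence of the Cartan involution $\tau$ with $\tau(\h)=\h$ and finiteness of $\p_\theta\backslash\g/\h$ being explicit assumptions of the corollary), and the conclusion that $\bfo_\rho^{\Imin}$ is a domain of discontinuity follows immediately. There is essentially no obstacle here; the entire content of the statement is packaged into the definition of $w_0$-fat and Theorem \ref{thm: dod for anosov}(2), with the mild observation that an ideal containing every minimal position trivially satisfies the $w_0$-fat condition.
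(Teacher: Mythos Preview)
Your proposal is correct and matches the paper's approach exactly: the paper states that the corollary ``follows from Definition \ref{dfn: fat and w0fat ideal} and Theorem \ref{thm: dod for anosov}'', and indeed remarks right after Definition \ref{dfn: fat and w0fat ideal} that an ideal containing all minimal positions is automatically $w_0$-fat. You have simply spelled out these two immediate observations.
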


\subsection{Examples}\label{subsec: exampples of dods}
Let us discuss some examples where the previous results apply.

\begin{ex}\label{ex: dod group manifolds}
Let $\g_0$ be a linear, connected, finite center, semisimple Lie group without compact factors and $\g:=\g_0\times\g_0$.
Let $\x=\g_0$ be the corresponding group manifold (c.f. Example \ref{ex: relative positions in group manifolds}).

Let $\theta_{\tn{L}}$ and $\theta_{\tn{R}}$ be non-empty self-opposite sets of roots in $\Delta_0$.
For $\varepsilon\in\{\tn{L},\tn{R}\}$, let $\rho_{\varepsilon}:\Gamma\to\g_0$ be $\theta_\varepsilon$-Anosov with limit map $\xi_{\rho_\varepsilon,\theta_\varepsilon}$.
Then the representation $\rho:=(\rho_{\tn{L}},\rho_{\tn{R}}):\Gamma\to\g$ is $\theta$-Anosov, where $\theta:=(\theta_{\tn{L}},\theta_{\tn{R}})$.
Furthermore, its limit map is $\xi_{\rho,\theta}=(\xi_{\rho_{\tn{L}},\theta_{\tn{L}}},\xi_{\rho_{\tn{R}},\theta_{\tn{R}}})$.
  
By Example \ref{ex: relative positions in group manifolds} and Corollary \ref{cor: dod if unique minimal position} the set
\begin{equation}\label{eq: dod for group manifold}
\lbrace g\in\g_0 \colon\quad \pos(\xi_{\rho_{\tn{L}},\theta_{\tn{L}}}(z) ,g\cdot\xi_{\rho_{\tn{R}},\theta_{\tn{R}}}(z))\tn{ is not minimal for all } z\in\bg\rbrace
\end{equation}
is a domain of discontinuity for $\rho$ in $\x=\g_0$.
In particular, when $\theta_{\tn{L}}=\theta_{\tn{R}}$ we get that $$\lbrace g\in\g_0:\hspace{0,3cm} g\cdot\xi_{\rho_{\tn{R}},\theta_{\tn{R}}}(z)\neq \xi_{\rho_{\tn{L}},\theta_{\tn{L}}}(z) \tn{ for all } z\in\bg\rbrace
$$ \noindent is a domain of discontinuity for $\rho$.
It is not hard to construct examples for which this set is non-empty.

We note that the $\rho$-action of $\Gamma$ may (or may not) be proper on $\g_0$, depending on the particular example (see Guéritaud-Guichard-Kassel-Wienhard \cite[Theorem 7.3]{GGKW}).
\end{ex}

\begin{ex}\label{ex: dod Hpq}
Let $\g=\mathsf{PSO}_0(p,q)$ and take $\x=\mathbb{H}^{p,q-1}$ as in Example \ref{ex: relative positions in hpq}.
Let $\p_1^{p,q}$ be the stabilizer in $\g$ of an isotropic line. 

An important class of $\p_1^{p,q}$-Anosov representations into $\g$, coined $\mathbb{H}^{p,q-1}$-\textit{convex co-compact}, was introduced and studied by Danciger-Gu\'eritaud-Kassel \cite{DGK1}.
We refer to \cite{DGK1} for precise definitions but let us mention here that, as shown in \cite{DGK1}, these representations always admit the following non-empty domain of discontinuity inside $\mathbb{H}^{p,q-1}$: $$\Omega:=\{x\in\mathbb{H}^{p,q-1}: x\notin \xi_\rho(z)^\perp \tn{ for every } z\in\bg\},$$ \noindent where $\xi_\rho:\bg\to\partial\mathbb{H}^{p,q-1}$ is the limit map of $\rho$.
Our construction of domains of discontinuity recovers this (c.f. Example \ref{ex: relative positions in hpq} and Corollary \ref{cor: dod if unique minimal position}).
\end{ex}

\begin{ex}\label{ex: dod quadratic forms projetive anosov}
Let $p$ and $q$ be positive integers so that $d:=p+q>2$.
Let $\x\cong \PSL_d(\rr)/\mathsf{PSO}(p,q)$ be the space of quadratic forms of signature $(p,q)$ on $\rr^d$, considered up to scaling.
Let also $\theta:=\{\alpha_1,\alpha_{d-1}\}$.
By Example \ref{ex: fat ideals quadratic forms and projective} and Corollary \ref{cor: dod if unique minimal position}, for every $\{\alpha_1,\alpha_{d-1}\}$-Anosov representation $\rho$ with limit map $(\xi_\rho^1,\xi_\rho^{d-1})$, the set 
\begin{equation}\label{eq: dod quadratic forms projetive anosov}
\{x\in\x: \xi^{d-1}_\rho(z)\neq  (\xi^{1}_\rho(z))^{\perp_x} \tn{ for all } z\in\bg\}
\end{equation}
\noindent is a domain of discontinuity for $\rho$ (here $\cdot^{\perp_x}$ denotes the orthogonal complement with respect to the form $x$).
Observe that the $\Gamma$-action on $\x$ is not proper in general (c.f. \cite[Corollary 1.9]{GGKW}).

For every $ p$ and $ q$ the domain (\ref{eq: dod quadratic forms projetive anosov}) is non-empty for a Benoist representation as in Example \ref{ex: divisible convex} (because the limit set is contained in an affine chart).
It is also non-empty for a Hitchin representation as in Example \ref{ex: hitchin}.

\end{ex}



\begin{ex}\label{ex: dod complementary subspaces projective}
Let $\x\cong\SL_d(\rr)/\mathsf{S}(\mathsf{GL}_p(\rr)\times\mathsf{GL}_q(\rr))$ with $1\leq p\leq q$ such that $d=p+q$.
We have $\mu(\h)=\liea^+$ and by Benoist-Kobayashi's Theorem \ref{thm: benoist kobayashi} no infinite discrete subgroup of $\g$ acts properly on $\x$.
This applies in particular to (images of) Anosov representations, hence \cite[Corollary 1.9]{GGKW} does not apply to this example.

Suppose that $\rho$ is $\{\alpha_1,\alpha_{d-1}\}$-Anosov with limit map $(\xi_\rho^1,\xi_\rho^{d-1})$.
Then every $w_0$-fat ideal in $\p_{\{\alpha_1, \alpha_{d-1}\}} \backslash \g / \h$ gives us a domain of discontinuity for $\rho$ in $\x$.
These ideals were discussed in Example \ref{ex: fat ideals complementary subspaces and projective}.
For instance, the ideal $\Imin$ consisting of all minimal positions corresponds to the domain of discontinuity
\[\Omega_\rho^{\Imin} = \{(U^+,U^-) \in \x \colon \xi_\rho^1(z) \pitchfork U^\pm \text{ or } \xi_\rho^{d-1}(z) \pitchfork U^\pm \text{ for all $z \in \partial\Gamma$}\},\]
where $\xi^k \pitchfork U^\pm$ means both $U^+$ and $U^-$ are transverse to $\xi^k$.

Although there is no reason for $\Omega_\rho^{\Imin}$ to be non-empty in general, it is easy to see that this is the case for Benoist representations, as well as for Hitchin representations (by a dimensional argument).

We can even find larger domains of discontinuity by using the minimal $w_0$-fat ideals from Example \ref{ex: fat ideals complementary subspaces and projective}.
That is, we have the domains $\Omega_\rho^{\bfi_{123}}$ and $\Omega_\rho^{\bfi_{234}}$ if $p > 1$, and $\Omega_\rho^{\bfi_{13}}$ and $\Omega_\rho^{\bfi_{34}}$ if $p = 1$.
All of these are supersets of $\Omega_\rho^{\Imin}$, so they are also non-empty in the case of a Benoist or a Hitchin representation $\rho$.

\end{ex}

\section{Maximality}\label{sec: maximality}

In this section we show that, under some assumptions, Theorem \ref{thm: dod for anosov} describes maximal domains of discontinuity in the case of $\Delta$-Anosov representations (Theorem \ref{thm: maximality with w0-fat} below).

\subsection{Sufficient condition for dynamical relations}

The following proposition could be thought of as a strengthening of Benoist-Kobayashi's Theorem \ref{thm: benoist kobayashi}, in the case of symmetric spaces and $\Delta$-Anosov representations acting on it.
Instead of ensuring just the existence of dynamical relations in $\x$, it provides a sufficient condition for the existence of a dynamical relation between two given points in $\x$.

\begin{prop}\label{prop suf condition for dynamical relation symmetric case}
Suppose that $\h$ is symmetric, $\liea$ is $\sigma$-invariant, and $\liea_\h:=\liea\cap\lieh$ is a Cartan subspace of $\h$.
Assume that $\liea_\h$ contains a regular element, and pick the Weyl chamber $\liea^+$ in such a way that $\tn{int}(\liea^+)\cap\liea_\h$ is non-empty.
Assume also $\m\subset\h$.
Let $\rho:\Gamma\to\g$ be a Zariski dense $\Delta$-Anosov representation so that $\mu(\h)\cap\tn{int}(\cone)\neq\emptyset$.
Pick points $\xi_-,\xi_+\in\xi_\rho(\bg)$ and $x,x'\in\x$ so that the relative positions $\pos(\xi_-,x)$ and $\pos(\xi_+,x')$ in $\bor\backslash\g/\h$ are minimal and satisfy $$\pos(\xi_-,x)\leftrightarrow\pos(\xi_+,x').$$
\noindent Then $x$ is dynamically related to $x'$ under a sequence in $\rho(\Gamma)$.
\end{prop}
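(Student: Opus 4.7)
The plan is to construct explicit sequences $\gamma_p\to\infty$ in $\Gamma$ and $x_p\to x$ in $\x$ so that $\rho(\gamma_p)\cdot x_p\to x'$. By Corollary \ref{cor: related minimal positions} and Theorem \ref{thm: matsuki minimal}, I would first fix some $w\in\w^\sigma$ (identified with a chosen lift in $\n_\ko(\liea)$) so that $\pos(\xi_-,x)=\bor w\h$ and $\pos(\xi_+,x')=\bor w_0 w\h$, and pick $g_\pm\in\g$ realizing them: $g_\pm\bor=\xi_\pm$, $g_-w\cdot o=x$, $g_+w_0w\cdot o=x'$. The central structural identity here is $\exp(H)\cdot(w\cdot o)=w\cdot o$ for every $H\in\liea_\h$, valid because $w\in\w^\sigma$ preserves $\liea_\h$, and it is what will absorb the diverging Cartan projection. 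Corollary \ref{cor cartan compatible with H} gives $\mu(\h)\subset\liea_\h$, and since $\cone$ has non-empty interior contained in $\liea^+$ we have $\tn{int}(\cone)\subset\tn{int}(\liea^+)$, so we may fix $X_0\in\mu(\h)\cap\tn{int}(\cone)\subset\liea_\h\cap\tn{int}(\liea^+)$.

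Next, Proposition \ref{prop: directions interior to limit cone approached by mu(Gamma)} produces a sequence $\gamma_p\to\infty$ in $\Gamma$ with $U(\rho(\gamma_p))\to\xi_+$, $S(\rho(\gamma_p))\to\xi_-$, and $\mu(\rho(\gamma_p))$ close to a ray $Y+\rr_{\geq 0}X_0$, where $Y\in\liea$ is a translational parameter to be chosen at the end. Write the Cartan decomposition $\rho(\gamma_p)=k_p\exp(\mu_p)l_p$ with $k_p,l_p\in\ko$ and pass to a convergent subsequence $k_p\to k_\infty$, $l_p\to l_\infty$. Identifying $\f^-\cong\f$ via $w_0$ we have $l_\infty^{-1}w_0\bor=\xi_-$, so $b_-\in\bor$ is uniquely defined by $l_\infty^{-1}w_0=g_-b_-$; I would then set $x_p:=l_p^{-1}w_0 b_-^{-1}w\cdot o$, which by construction converges to $g_-w\cdot o=x$.

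The core computation evaluates $\lim_p\rho(\gamma_p)x_p=\lim_p k_p\exp(\mu_p)w_0 b_-^{-1}w\cdot o$. After commuting $\exp(\mu_p)$ past $w_0$ the Cartan factor becomes $\exp(w_0\mu_p)$, whose escaping part is $\exp(t_pw_0X_0)$ with $w_0X_0\in\liea_\h\cap(-\tn{int}(\liea^+))$. Langlands-decomposing $b_-^{-1}=m_-a_-n_-$ with $m_-\in\m$, $a_-\in\exp(\liea)$, $n_-\in\n$: conjugation by $\exp(t_pw_0X_0)$ contracts $n_-$ to the identity because positive roots pair negatively with $w_0X_0$; the residual factor $\exp(t_pw^{-1}w_0X_0)$ stabilizes $o$ because $w^{-1}w_0X_0\in\liea_\h$; and the hypothesis $\m\subset\h$, combined with the fact that $w\in\w^\sigma$ normalizes $\m$, makes $m_-$ fix $w\cdot o$. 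These three reductions collapse the limit to $k_\infty\exp(Y+w_0\log a_-)\,w_0w\cdot o$.

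The main obstacle is closing the loop by matching this limit to $x'=g_+w_0w\cdot o$. This requires a parallel analysis of $b_+:=k_\infty^{-1}g_+\in\bor$: using the freedom in the choice of $g_+$, which is only determined modulo the stabilizer $\bor\cap w_0w\h w^{-1}w_0^{-1}$ (containing $\m$, $\exp(\liea_\h)$, and a specific unipotent subgroup), one normalizes $b_+$, and the remaining identification becomes a linear equation in $\liea$ for the parameter $Y$. Because the $\liea_\h$-part of $Y$ is absorbed by $\exp(\liea_\h)\subset\h$ while the $\liea\cap\lieq$-part is free, the equation admits a solution. This bookkeeping is the technical heart of the proof, where all three hypotheses ($X_0\in\liea_\h$, $\m\subset\h$, and $w\in\w^\sigma$ preserving $\liea_\h$) combine to provide exactly the flexibility required. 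With $Y$ so chosen, $\{\rho(\gamma_p)\}$ dynamically relates $x$ and $x'$, completing the proof.
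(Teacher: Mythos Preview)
Your overall architecture matches the paper's: represent the two minimal positions via Theorem \ref{thm: matsuki minimal} and Corollary \ref{cor: related minimal positions} as $\bor w\h$ and $\bor w_0w\h$ for a single $w\in\w^\sigma$, invoke Proposition \ref{prop: directions interior to limit cone approached by mu(Gamma)} with a carefully chosen $Y$, and absorb the diverging Cartan part using $\exp(\liea_\h)\cdot(w\cdot o)=w\cdot o$. Your treatment of the $\m$-part (via $\m\subset\h$ and normality in $\n_\ko(\liea)$) and of the $\exp(\liea)$-part (solved by $Y$) is correct.

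The gap is in the unipotent on the $x'$ side. With your choice $x_p=l_p^{-1}w_0 b_-^{-1}w\cdot o$ the limit of $\rho(\gamma_p)\cdot x_p$ lands, as you compute, in $k_\infty\exp(\liea)\,w_0w\cdot o$. Matching this to $x'=k_\infty b_+w_0w\cdot o$ forces the $\n$-component of $b_+$ to lie in $\n\cap w_0w\h(w_0w)^{-1}$. But this is generally a \emph{proper} subgroup of $\n$: for instance with $\g=\SL_d(\rr)$, $\h=\mathsf{S}(\GL_p\times\GL_q)$ and $w=1$, one gets the strictly upper triangular matrices that are block diagonal of type $(q,p)$, leaving a $pq$-dimensional obstruction. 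The ``freedom in the choice of $g_+$'' is exactly right-multiplication by this stabilizer, so it cannot remove the remaining unipotent discrepancy, and choosing $Y$ only moves you inside $\exp(\liea)$. Hence for generic $x'$ in its $\bor$-orbit your sequence does not converge to $x'$.

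The paper fixes this by building flexibility into $x_p$ itself rather than into $g_+$. Writing $l\cdot x=\widetilde n^-\,a\,w\cdot o$ with $\widetilde n^-\in\n^-$ and $k^{-1}\cdot x'=\widetilde n'\,a'\,w\cdot o$ with $\widetilde n'\in\n$, it sets $x_p:=g_p\,a\,w\cdot o$ where $g_p$ is produced by \cite[Lemma 3.1.5]{SteThesis}: this lemma gives, for any prescribed $\widetilde n^-\in\n^-$ and $\widetilde n'\in\n$, a sequence $g_p\to\widetilde n^-$ with $\exp(\mu_p)g_p\exp(-\mu_p)\to\widetilde n'$. That extra parameter is precisely what lets the target unipotent $\widetilde n'$ be arbitrary, and is the ingredient your argument is missing.
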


\begin{proof}
We first show that our assumptions allows us to represent the relevant relative positions conveniently (Equation (\ref{eq: in prop suff cond dynamical relations}) below). 
Namely, by Matsuki's Theorem \ref{thm: matsuki minimal} and Corollary \ref{cor: related minimal positions} there is some $w\in\w^\sigma$ so that $$\pos(\xi_-,x)=\bor w_0w\h \tn{ and } \pos(\xi_+,x')=\bor w\h.$$
\noindent We may write $\xi_+=k_0\bor$ and $\xi_-=l_0^{-1}\bor^-$ for some $k_0,l_o\in\ko$.
We then find elements $b'\in\bor$ and $b^-\in\bor^-$ such that $$k_0^{-1}\cdot x' = b'w\cdot o \tn{ and }l_0\cdot x=b^-w\cdot o.$$
\noindent Now decompose $b$ and $b' $ as $b'=n'a'm'$ and $b^-=n^-am$ for some $a,a'\in\exp(\liea)$, $m,m'\in\m$, $n'\in\n$ and $n^-\in\n^-$. 
As $w$ normalizes $\m$ and $\m\subset\h$ we have \begin{equation}\label{eq: in prop suff cond dynamical relations}
k_0^{-1}\cdot x' = n'a'w\cdot o \tn{ and } l_0\cdot x=n^-aw\cdot o.
\end{equation}

We now apply Proposition \ref{prop: directions interior to limit cone approached by mu(Gamma)} and \cite[Lemma 3.1.5]{SteThesis} to show that we may find a sequence in $\Gamma$ going to infinity that allows us to ``go from $n^-a$ to $n'a'$" in the equation above.
More precisely, by Proposition \ref{prop: directions interior to limit cone approached by mu(Gamma)} there are sequences $\gamma_p\to\infty$ in $\Gamma$ and $A_p\in\mu(\h)$ such that $U(\rho(\gamma_p))\to\xi_+$, $S(\rho(\gamma_p))\to\xi_-$, and
$$d(\mu(\rho(\gamma_p)), -\log(a)+\log(a')+A_p)\to 0.$$
\noindent Let $h_p:=\exp(A_p)$, which belongs to $\h$ thanks to Corollary \ref{cor cartan compatible with H}.
We have \begin{equation}\label{eq: in prop suff cond dynamical relations I}\exp(\mu(\rho(\gamma_p))a=a_pa'h_p,
\end{equation}
\noindent for some sequence $\{a_p\}\subset\exp(\liea)$ such that $a_p\to 1$.

On the other hand, consider a Cartan decomposition $\rho(\gamma_p)=k_p\exp(\mu(\rho(\gamma_p)))l_p$ of $\rho(\gamma_p)$.
Up to taking a subsequence if necessary, we may assume $k_p\to k$ and $l_p\to l$ for some $k,l\in\ko$.
Note $$k_0\bor=\xi_+=k\bor \tn{ and } l_0^{-1}\bor^-=\xi_-=l^{-1}\bor^-.$$
\noindent There exist then $m_0,\widetilde{m}_0\in\m$ so that $km_0=k_0 \tn{ and } \widetilde{m}_0^{-1}l=l_0.$
Thus $$k^{-1}\cdot x'=(m_0k_0^{-1})\cdot x'=m_0n'a'w\cdot o.$$
\noindent Since $\m$ normalizes $\n$ we have $k^{-1}\cdot x'=\widetilde{n}'a'm_0w\cdot o$ for some $\widetilde{n}'\in\n$. Similarly, $l\cdot x=\widetilde{n}^-a\widetilde{m}_0w\cdot o$ for some $\widetilde{n}^-\in\n^-$.
Proceeding as in (\ref{eq: in prop suff cond dynamical relations}) to eliminate $m_0$ and $\widetilde{m}_0$ we conclude \begin{equation}\label{eq: in prop suff cond dynamical relations II}
k^{-1}\cdot x'=\widetilde{n}'a'w\cdot o \tn{ and } l\cdot x=\widetilde{n}^-aw\cdot o.
\end{equation}

Now since $\rho$ is $\Delta$-Anosov, we have $\alpha(\mu(\rho(\gamma_p)))\to\infty$ for all $\alpha\in\Delta$.
By \cite[Lemma 3.1.5]{SteThesis}, we may take a sequence $g_p\to \widetilde{n}^-$ so that $$\exp(\mu(\gamma_p))g_p\exp(-\mu(\gamma_p))\to \widetilde{n}'.$$
\noindent Then $x_p:=g_paw\cdot o\to \widetilde{n}^-aw\cdot o=l\cdot  x$ and $$\exp(\mu(\rho(\gamma_p)))\cdot x_p=(\exp(\mu(\gamma_p)) g_p \exp(-\mu(\gamma_p)))\exp(\mu(\gamma_p))aw\cdot o.$$
\noindent By Equation (\ref{eq: in prop suff cond dynamical relations I}) we have $$\exp(\mu(\rho(\gamma_p)))\cdot x_p=(\exp(\mu(\gamma_p)) g_p \exp(-\mu(\gamma_p))) a_pa'h_p   w\cdot o.$$
\noindent Since $w\in\w^\sigma$ and $h_p\in\exp(\liea_\h)$ we have $h_pw\cdot o=w\cdot o$.
Hence by Equation (\ref{eq: in prop suff cond dynamical relations II}) $$\exp(\mu(\rho(\gamma_p)))\cdot x_p\to \widetilde{n}'a'w\cdot o=k^{-1}\cdot x'.$$
\noindent We conclude that $l\cdot x$ is dynamically related to $k^{-1}\cdot x'$ through the sequence $\{\exp(\mu(\rho(\gamma_p)))\}$.
Hence $x$ is dynamically related to $x'$ through the sequence $\{\rho(\gamma_p)\}$.
\end{proof}

\begin{rem}\label{rem: sufficient condition dynrel flags}
Proposition \ref{prop suf condition for dynamical relation symmetric case} takes inspiration from \cite[Lemma 3.8]{SteIDEALS}, which proves a similar statement in the case $\h=\p_{\theta'}$.
That result guarantees a dynamical relation between $x$ and $x'$ in $\f_{\theta'}$ provided that $$\pos(\xi_-,x)=w_0\cdot\pos(\xi_+,x'),$$
\noindent but with no minimality assumption on these relative positions.
Note that when $\h=\p_{\theta'}$ one has $\mu(\h)=\liea^+$, therefore the condition $\mu(\h)\cap \tn{int}(\cone)\neq\emptyset$ is always satisfied.

If $\h$ is symmetric a further assumption on the relative positions $\pos(\xi_-,x)$ and $\pos(\xi_+,x')$ is needed.
For instance, consider $\x\cong\PSL_3(\rr)/\mathsf{PSO}(2,1)$.
There are two relative positions $\mathbf{p} \neq \mathbf{p}'$ which are neither minimal nor maximal, fixed by $w_0$ and transversely related (c.f. Figure \ref{fig: intro}).
We might then have $$\pos(\xi_-,x)=\bfp \leftrightarrow \bfp' = \pos(\xi_+,x').$$
\noindent But $\bfp$ is transversely related to the minimal position, so Lemma \ref{lem: for domains of discontinuity} forbids the possibility of having a dynamical relation between $x$ and $x'$.
\end{rem}

\begin{rem}\label{rem: MsubsetH and oriented}
The assumption $\m\subset\h$ in Proposition \ref{prop suf condition for dynamical relation symmetric case} might look unnatural.
However, Example \ref{ex: MsubsetH is needed for dynamical relation} below shows that it is necessary, as it showcases an example of a $\zz$-action on the group manifold $\SL_2(\rr)$ for which our domains of discontinuity are not maximal.
We mention that the same idea can be used for other word hyperbolic groups and group manifolds (e.g. $\SL_3(\rr)$) to construct domains of discontinuity which are larger than the one given by Theorem \ref{thm: dod for anosov}, as long as there is a consistent way to remove  ``refined bad" positions.
This can be done if the limit curve lifts to a finite cover $\widetilde\f$ of the flag manifold, by considering relative positions in $\g\backslash(\widetilde\f \times \x)$ instead of $\g\backslash(\f \times \x)$, similarly to the situation in \cite{SteTreORIENTED}.
\end{rem}

\begin{ex}\label{ex: MsubsetH is needed for dynamical relation}

Consider the group manifold $\x:=\SL_2(\rr)$ and a sequence $$\gamma_p:=\left(  \left(  \begin{matrix}
\mu^{\tn{L}}_p & \\
 & (\mu_p^{\tn{L}})^{-1}
\end{matrix}   \right),\left(  \begin{matrix}
\mu^{\tn{R}}_p & \\
 & (\mu_p^{\tn{R}})^{-1}
\end{matrix}   \right) \right) ,$$ \noindent where $\mu_p^{\tn{L}},\mu_p^{\tn{R}}\to\infty$.
If $e_1, e_2$ denote the standard basis vectors in $\rr^2$, then for all $p$, $$U(\gamma_p)=(\rr e_1,\rr e_1)=:\xi_+ \tn{ and } S(\gamma_p)=(\rr e_2,\rr e_2)=:\xi_-.$$

On the other hand, take two points $\left(  \begin{smallmatrix}
a & b\\
c & d
\end{smallmatrix}   \right)$ and $\left(  \begin{smallmatrix}
a' & b'\\
c' & d'
\end{smallmatrix}   \right)$ in $\x$, and suppose that they are dynamically related through $\{\gamma_p\}$.
By Lemma \ref{lem: for domains of discontinuity} we have $b=c'=0$.
Furthermore, one may directly check that the dynamical relation implies $ \mu_p^{\tn{L}}/\mu_p^{\tn{R}}\to a'/a$ as $p\to\infty$.
In particular, $a$ and $a'$ must have the same sign.
In particular, the points $x:=\left(  \begin{smallmatrix}
1 & 0\\
0 & 1
\end{smallmatrix}   \right)$ and $x':=\left(  \begin{smallmatrix}
-1 & 0\\
0 & -1
\end{smallmatrix}   \right)$ cannot be dynamically related through the sequence $\{\gamma_p\}$.
However, $\pos(\xi_-,x)\leftrightarrow\pos(\xi_+,x')$ and these two positions are minimal (and coincide).
\end{ex}

\subsection{Maximal domains of discontinuity}\label{subsec: maximality of dods}

We are now ready to prove our maximality result.

\begin{thm}\label{thm: maximality with w0-fat}
Suppose that $\h$ is symmetric, $\liea$ is $\sigma$-invariant, and $\liea_\h:=\liea\cap\lieh$ is a Cartan subspace of $\h$.
Assume that $\liea_\h$ contains a regular element, and pick the Weyl chamber $\liea^+$ in such a way that $\tn{int}(\liea^+)\cap\liea_\h$ is non-empty.
Assume also $\m\subset\h$.
Let $\rho:\Gamma\to\g$ be a Zariski dense $\Delta$-Anosov representation so that $\mu(\h)\cap\tn{int}(\cone)\neq\emptyset$.
Then if $\bfo\subset \x$ is a maximal domain of discontinuity for $\rho$, there exists a $w_0$-fat ideal $\bfi\subset\bor\backslash\g/\h$ so that $\bfo=\bfo_\rho^\bfi$.
\end{thm}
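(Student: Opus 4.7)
The plan is to associate to the given maximal domain $\bfo$ the set
\[\bfi := \{\bfp \in \bor\backslash\g/\h : \pos(\xi_\rho(z),x)\neq\bfp \text{ for all } z\in\bg \text{ and } x\in\bfo\},\]
and to show that $\bfi$ is a $w_0$-fat ideal satisfying $\bfo=\bfo_\rho^\bfi$. By construction $\bfo\subset\bfo_\rho^\bfi$, so once $\bfi$ is known to be a $w_0$-fat ideal, Theorem \ref{thm: dod for anosov}(2) will produce a domain of discontinuity $\bfo_\rho^\bfi\supset\bfo$, and the maximality of $\bfo$ will force equality. Note that $\bor\backslash\g/\h$ is finite because $\h$ is symmetric, and a Cartan involution preserving $\h$ exists by Subsection \ref{subsec: symmetric subgroups}, so the hypotheses of Theorem \ref{thm: dod for anosov}(2) are available.

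The first task is to verify that $\bfi$ is an ideal, equivalently that its complement is upward closed. Given $\bfp=\pos(\xi_\rho(z),x)\notin\bfi$ and $\bfp'\geq\bfp$, set $\xi=\xi_\rho(z)$. Translating the Bruhat order via the bijection between $\bor\backslash\g/\h$ and $\h^\xi$-orbits in $\x$, the stratum $\{y\in\x : \pos(\xi,y)=\bfp'\}$ has $x$ in its closure. Since $\bfo$ is open, I obtain some $x'\in\bfo$ arbitrarily close to $x$ with $\pos(\xi,x')=\bfp'$, so $\bfp'\notin\bfi$.

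The crucial step is establishing $w_0$-fatness, which is where Proposition \ref{prop suf condition for dynamical relation symmetric case} enters. Suppose by contradiction that some minimal $\bfp_{\min}\notin\bfi$ has $w_0\cdot\bfp_{\min}\notin\bfi$. Choose witnesses $z,z'\in\bg$ and $x,x'\in\bfo$ with $\pos(\xi_\rho(z),x)=\bfp_{\min}$ and $\pos(\xi_\rho(z'),x')=w_0\cdot\bfp_{\min}$. Proposition \ref{prop: action of w0} ensures $w_0\cdot\bfp_{\min}$ is again minimal, while Corollary \ref{cor: w0 action and transversely related} gives $\bfp_{\min}\leftrightarrow w_0\cdot\bfp_{\min}$. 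All the hypotheses of Proposition \ref{prop suf condition for dynamical relation symmetric case} are then in place, so $x$ is dynamically related to $x'$ under a sequence in $\rho(\Gamma)$. Since $x,x'\in\bfo$, this contradicts Lemma \ref{lem: prop discont vs dyn rels}.

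The main obstacle is Proposition \ref{prop suf condition for dynamical relation symmetric case} itself, which already packages the serious content (Zariski density via Proposition \ref{prop: directions interior to limit cone approached by mu(Gamma)}, the hypothesis $\m\subset\h$, the non-empty intersection $\mu(\h)\cap\tn{int}(\cone)$, and Matsuki's parametrization of minimal positions in Theorem \ref{thm: matsuki minimal}). The argument above is then essentially bookkeeping: define $\bfi$ tautologically from $\bfo$, deduce the ideal axiom from openness and orbit-closure inclusions, and invoke Proposition \ref{prop suf condition for dynamical relation symmetric case} once to obtain $w_0$-fatness.
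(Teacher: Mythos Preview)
Your proof is correct and follows essentially the same approach as the paper: define $\bfi$ tautologically from $\bfo$, verify it is an ideal using orbit-closure inclusions together with openness of $\bfo$, invoke Proposition \ref{prop suf condition for dynamical relation symmetric case} for $w_0$-fatness, and conclude via Theorem \ref{thm: dod for anosov}(2) and maximality. The only cosmetic difference is that you prove the ideal property directly (showing the complement of $\bfi$ is upward closed), whereas the paper phrases the same step contrapositively; also, in your ideal argument the orbits in $\x$ for fixed $\xi$ are orbits of the stabilizer of $\xi$ (a conjugate of $\bor$), so the notation ``$\h^\xi$-orbits'' should be read in that sense.
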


\begin{proof}
The proof follows the approach by \cite{SteIDEALS}.
Indeed, let $$\bfi:=(\bor\backslash\g/\h)\setminus\{\pos(\xi,x): \xi\in\xi_\rho(\bg) \tn{ and } x\in\bfo\},$$
\noindent and observe that $\bfo\subset\bfo_\rho^\bfi$.
Hence by the maximality assumption and Theorem \ref{thm: dod for anosov}, it suffices to show that $\bfi$ is a $w_0$-fat ideal.

We first show that $\bfi$ is an ideal.
To do so, we suppose by contradiction that there is some $\bfp\in\bfi$ and $\bfp'\leq \bfp$ so that $\bfp'\notin\bfi$.
We may then represent $\bfp'=\pos(\xi_0,x_0)$ for some $\xi_0\in\xi_\rho(\bg)$ and $x_0\in\bfo$.
Now since $\bfp\in\bfi$, the set $$\{x\in\x: \pos(\xi_0,x)=\bfp\}$$
\noindent is contained in $\x\setminus\bfo$, which is closed.
Hence $$\overline{\{x\in\x: \pos(\xi_0,x)=\bfp\}}\subset \x\setminus\bfo.$$
\noindent But since $\bfp'\leq\bfp$, we have $$\{x\in\x: \pos(\xi_0,x)=\bfp'\}\subset\overline{\{x\in\x: \pos(\xi_0,x)=\bfp\}}.$$
\noindent This implies $x_0\notin\bfo$, a contradiction.

To show that $\bfi$ is $w_0$-fat we proceed again by contradiction.
Suppose then that there is some $\bfp_{\min}\notin\bfi$ such that $w_0\cdot\bfp_{\min}\notin\bfi$.
We may then write $$\bfp_{\min}=\pos(\xi_-,x) \tn{ and } w_0\cdot\bfp_{\min}=\pos(\xi_+,x')$$
\noindent for some $x,x'\in\bfo$ and $\xi_{\pm}\in\xi_\rho(\bg)$.
Proposition \ref{prop suf condition for dynamical relation symmetric case} guarantees the existence of a dynamical relation between $x$ and $x'$ under a sequence in $\rho(\Gamma)$, finding the desired contradiction.
\end{proof}

For the space of complementary subspaces on a real vector space we can fully apply Theorem \ref{thm: maximality with w0-fat}.

\begin{ex}\label{ex: maximal complementary subspaces}
  Let $\x \cong \g / \h = \SL_d(\rr)/\mathsf{S}(\mathsf{GL}_p(\rr)\times\mathsf{GL}_q(\rr))$ for $p\geq 1$ and $q\geq 1$ such that $d=p+q$.
  Note that $\mu(\h)=\liea^+$ and $\m\subset\h$.
For convenience, we will write $\p_1$ for the parabolic subgroup $\p_{\{\alpha_1, \alpha_{d-1}\}}$, while $\bor=\p_\Delta$ is the Borel subgroup.

Recall from Example \ref{ex: fat ideals complementary subspaces and projective} and Example \ref{ex: dod complementary subspaces projective} that there are two minimal $w_0$-fat ideals in $\p_1 \backslash \g / \h$, corresponding to two domains of discontinuity in $\x$ for every $\{\alpha_1, \alpha_{d-1}\}$-Anosov representation $\rho \colon \Gamma \to \SL_d(\rr)$.

Now we want to consider the action of a $\Delta$-Anosov representation $\rho \colon \Gamma \to \SL_d(\rr)$ instead.
Note that $\rho$ is in particular $\{\alpha_1,\alpha_{d-1}\}$-Anosov, so the domains from Example \ref{ex: dod complementary subspaces projective} are still domains of discontinuity.
However, we can find larger ones.

To do this, consider the natural surjection $q \colon \bor \backslash \g / \h \to \p_1 \backslash \g / \h$.
If we represent elements of $\bor \backslash \g / \h$ as subsets $A \subset \{1, \dots, d\}$ with $\#A = p$ as in Example \ref{ex: matsuki and w0 action complementary subspaces}, and let the minimal positions in $\p_1 \backslash \g / \h$ be $\{\bfp_1, \bfp_2, \bfp_3, \bfp_4\}$ as in Example \ref{ex: fat ideals complementary subspaces and projective}, then
\[q^{-1}(\bfp_1) = \{A \mid 1 \in A \land d \not\in A\}, \quad q^{-1}(\bfp_2) = \{A \mid 1 \in A \land d \in A\},\]
\[q^{-1}(\bfp_3) = \{A \mid 1 \not\in A \land d \not\in A\}, \quad q^{-1}(\bfp_4) = \{A \mid 1 \not\in A \land d \in A\}.\]
Recall that the $w_0$-action fixes $\bfp_2$ and $\bfp_3$ while it interchanges $\bfp_1$ and $\bfp_4$.
Hence we had to include $\bfp_2$ and $\bfp_3$ in every $w_0$-fat ideal of $\p_1 \backslash \h / \g$.
The $w_0$-action on $\bor \backslash \g / \h$ accordingly maps $q^{-1}(\bfp_1)$ to $q^{-1}(\bfp_4)$ and maps the sets $q^{-1}(\bfp_2)$ and $q^{-1}(\bfp_3)$ to themselves, but does not fix every element of them (we saw in Example \ref{ex: matsuki and w0 action complementary subspaces} that it has no fixed points if $p$ and $q$ are both odd, and $\binom{\lfloor d/2 \rfloor}{\lfloor p/2 \rfloor}$ fixed points otherwise).

This gives us some choices to pick a minimal $w_0$-fat ideal $\bfi \subset \bor \backslash \g / \h$.
For instance, we could include all of $q^{-1}(\bfp_1)$ and none of $q^{-1}(\bfp_4)$, as well as from $q^{-1}(\bfp_2)$ and $q^{-1}(\bfp_3)$ exactly one of each pair of positions identified by $w_0$ (as well as all fixed points of $w_0$).
For any choice like this, $\Omega_\rho^\bfi \subset \x$ will be a domain of discontinuity for $\rho$ which contains the domain $\Omega_\rho^{\bfi_{123}}$ from Example \ref{ex: dod complementary subspaces projective}.
Theorem \ref{thm: maximality with w0-fat} applies in this situation and tells us that $\Omega_\rho^\bfi$ is in fact maximal, i.e. there is no strictly larger open subset of $\x$ on which $\rho$ can act properly discontinuously.

Similarly, we can construct maximal domains of discontinuity containing $\Omega_\rho^{\bfi_{234}}$ from Example \ref{ex: dod complementary subspaces projective}, as well as ones which are independent of $\Omega_\rho^{\bfi_{123}}$ and $\Omega_\rho^{\bfi_{234}}$.

This discussion give us non-empty maximal open domains of discontinuity in $\x$ e.g. for every Zariski dense Hitchin representation.

\end{ex}

\bibliographystyle{plain}
\bibliography{dynamical_relations}
\end{document}